%% 
%% Copyright 2007-2024 Elsevier Ltd
%% 
%% This file is part of the 'Elsarticle Bundle'.
%% ---------------------------------------------
%% 
%% It may be distributed under the conditions of the LaTeX Project Public
%% License, either version 1.3 of this license or (at your option) any
%% later version.  The latest version of this license is in
%%    http://www.latex-project.org/lppl.txt
%% and version 1.3 or later is part of all distributions of LaTeX
%% version 1999/12/01 or later.
%% 
%% The list of all files belonging to the 'Elsarticle Bundle' is
%% given in the file `manifest.txt'.
%% 
%% Template article for Elsevier's document class `elsarticle'
%% with numbered style bibliographic references
%% SP 2008/03/01
%% $Id: elsarticle-template-num.tex 249 2024-04-06 10:51:24Z rishi $
%%
\documentclass[preprint,12pt]{elsarticle}

%% Use the option review to obtain double line spacing
%% \documentclass[authoryear,preprint,review,12pt]{elsarticle}

%% Use the options 1p,twocolumn; 3p; 3p,twocolumn; 5p; or 5p,twocolumn
%% for a journal layout:
%% \documentclass[final,1p,times]{elsarticle}
%% \documentclass[final,1p,times,twocolumn]{elsarticle}
%% \documentclass[final,3p,times]{elsarticle}
%% \documentclass[final,3p,times,twocolumn]{elsarticle}
%% \documentclass[final,5p,times]{elsarticle}
%% \documentclass[final,5p,times,twocolumn]{elsarticle}

%% For including figures, graphicx.sty has been loaded in
%% elsarticle.cls. If you prefer to use the old commands
%% please give \usepackage{epsfig}

%% The amssymb package provides various useful mathematical symbols
\usepackage{amssymb}
%% The amsmath package provides various useful equation environments.
\usepackage{amsmath}
\usepackage{amstext}
\usepackage{caption}
\usepackage{amsthm}
\usepackage{amsfonts}
\usepackage{algorithm}
\usepackage{amssymb}
\usepackage{array}
\usepackage{algpseudocode}
\usepackage{graphicx, subcaption}
\usepackage{xcolor}
\usepackage{mathtools}
\usepackage{booktabs}
\newtheorem{theorem}{Theorem}
\newtheorem{remark}{Remark}

\def\bx{{\mathbf{x}}}
\def\bn{{\mathbf{n}}}
 
\def\W11{{{\mathop{W^{1,1}_{2}}\limits^{\bullet\,\,\,\,\,\,\,}}}}

%% The amsthm package provides extended theorem environments
%% \usepackage{amsthm}

%% The lineno packages adds line numbers. Start line numbering with
%% \begin{linenumbers}, end it with \end{linenumbers}. Or switch it on
%% for the whole article with \linenumbers.
%% \usepackage{lineno}

\journal{Journal of Computational Physics}
\journal{Elsevier}

\begin{document}

\begin{frontmatter}

%% Title, authors and addresses

%% use the tnoteref command within \title for footnotes;
%% use the tnotetext command for theassociated footnote;
%% use the fnref command within \author or \affiliation for footnotes;
%% use the fntext command for theassociated footnote;
%% use the corref command within \author for corresponding author footnotes;
%% use the cortext command for theassociated footnote;
%% use the ead command for the email address,
%% and the form \ead[url] for the home page:
%% \title{Title\tnoteref{label1}}
%% \tnotetext[label1]{}
%% \author{Name\corref{cor1}\fnref{label2}}
%% \ead{email address}
%% \ead[url]{home page}
%% \fntext[label2]{}
%% \cortext[cor1]{}
%% \affiliation{organization={},
%%             addressline={},
%%             city={},
%%             postcode={},
%%             state={},
%%             country={}}
%% \fntext[label3]{}

\title{Improved randomized neural network  methods  with boundary processing for solving elliptic equations}

\author[label1]{Zhou, Huifang}  
 \ead{zhouhuifang@jlu.edu.cn}
%\fntext[label1]{School of Mathematics, Jilin University, Changchun 130012, P.R. China.}
\affiliation[label1]{organization={School of Mathematics,  Jilin University},%Department and Organization
%	addressline={2699 Qianjin Street}, 
	city={Changchun},
	postcode={130012}, 
	state={Jilin Province},
	country={P.R. China}} 

 \author{Sheng, Zhiqiang\corref{cor1}\fnref{label2,label3}}       
  \ead{sheng\_zhiqiang@iapcm.ac.cn}     
% \fntext[label2]{11111}
 \cortext[cor1]{Corresponding author.}
\affiliation[label2]{organization={Laboratory of Computational Physics, Institute of Applied Physics and Computational Mathematics},%Department and Organization
%	addressline={No. 6 Huayuan Road}, 
%	city={Beijing},
	postcode={100088}, 
	state={Beijing},
	country={P.R. China}} 

 \affiliation[label3]{organization={HEDPS, Center for Applied Physics and Technology, and College of Engineering, Peking University},%Department and Organization
% 	addressline={No. 209 Chengfu Road}, 
 	%	city={Beijing},
 	postcode={100871}, 
 	state={Beijing},
 	country={P.R. China}}

\begin{abstract}
 We present two improved randomized neural network   methods, namely    RNN-Scaling and RNN-Boundary-Processing (RNN-BP) methods, for solving elliptic equations such as  the Poisson equation and the biharmonic equation.   The RNN-Scaling method modifies the optimization objective by increasing the weight of    boundary equations, resulting in a more accurate  approximation. We propose the boundary processing techniques on the  rectangular  domain that enforce the RNN method to  satisfy the non-homogeneous Dirichlet and clamped boundary conditions exactly. We further prove that the RNN-BP method is exact  for some solutions with specific forms  and  validate it  numerically.   Numerical experiments demonstrate that the RNN-BP method is the most accurate among the  three methods, the error is reduced by 6 orders of magnitude for some tests. 
\end{abstract}

%%%Graphical abstract
%\begin{graphicalabstract}
%%\includegraphics{grabs}
%\end{graphicalabstract}

%%Research highlights
%\begin{highlights}
%\item Research highlight 1
%\item Research highlight 2
%\end{highlights}

%% Keywords
\begin{keyword}
%% keywords here, in the form: keyword \sep keyword
Randomized neural network\sep     elliptic equations\sep    boundary conditions\sep scaling method.

%% PACS codes here, in the form: \PACS code \sep code

%% MSC codes here, in the form: \MSC code \sep code
%% or \MSC[2008] code \sep code (2000 is the default)

\end{keyword}

\end{frontmatter}

 \section{Introduction}
 Elliptic partial differential equations (PDEs) model steady-state conditions in various physical phenomena,  including electrostatics, gravitational fields, elasticity, phase-field models, and image processing \cite{Barrett1999a, Henn2005, Ventsel2001}. For instance, the Poisson equation characterizes the distribution of a scalar field based on boundary conditions and interior sources. In contrast,  the biharmonic equation is employed to model    phenomena such as the deflection of elastic plates and the flow of incompressible, inviscid fluids.
 Solving these equations is essential for understanding and predicting system behavior in various applications. 
 The traditional numerical methods for solving  the elliptic equations, such as  the finite difference methods \cite{Ben-Artzi2009, Bialecki2012, Mohanty2000, Xu2023},    finite element methods \cite{Ming2006, Ming2007, Park2013, Zhang2020b},   finite volume methods \cite{LePotier2005a, Sheng2008, Sheng2016},  spectral methods \cite{Bialecki2010, Doha2008, Mai-Duy2007}, and the weak Galerkin finite element  method \cite{Cui2020, Mozolevski2003,  Zhang2015}  have been well studied and widely used. 
 However, these methods often require   careful discretization to obtain numerical solutions with high accuracy.
 Moreover, they may face challenges in handling mesh generation on  complex domains and boundary conditions.
 
 In recent years,  deep neural network (DNN) methods  have been greatly developed  in various fields, such as image recognition, natural language processing, and scientific computing. 
 One area where DNN has shown promise is in solving PDEs, including  elliptic  equations. The DNN-based method transforms the process of solving PDEs into optimization problems  and utilizes gradient backpropagation to adjust the network parameters and minimize the residual error of the PDEs.  Several effective  DNN-based methods include the Physics-Informed Neural Networks (PINNs) \cite{Raissi2019}, the deep Galerkin method   \cite{Sirignano2018}, the deep Ritz method \cite{E2018}, and the deep mixed residual method \cite{Lyu2022},  among others \cite{Dong2021a, Zang2020}.  The main difference between these methods lies in the construction  of the loss function.  
 
 PINNs offer a promising approach for solving  various types of PDEs.  However, they still have   limitations.   One major limitation   is the relatively low accuracy of the solutions \cite{Jagtap2020}, the absolute error rarely goes below the level of $10^{-3}$ to $10^{-4}$. The accuracy at  such levels is less than satisfactory for scientific computing and in some cases, they may fail to converge.  Another   limitation is that PINNs require a  high computational cost  and training time, making them less practical for large-scale or complex problems.  PINNs require substantial resources to integrate the PDEs into the  training process, especially for the problems with high-dimensional PDEs or those requiring fine spatial and temporal resolutions.

 RNN  has recently attracted increasing attention for its application in solving partial differential equations.
 The weights and biases  of the RNN method  are randomly generated and  fixed and don't need to be trained. The optimization problem of PINNs is usually a complicated nonlinear optimization problem, and a great number of training steps are required. For the RNN method, the resulting optimization problem is a least squares problem, which can be solved  without training steps. 
 
 For deep neural networks, the exact imposition of boundary and initial conditions is crucial for the training speed and accuracy of the model, which may accelerate the convergence of the training process and improve overall accuracy. 
 For instance, the inexact enforcement of boundary and initial conditions  severely affects the convergence and accuracy of PINN-based methods \cite{sukumar2022}. Recently, many methods have been developed for the exact imposition of   Dirichlet and Neumann boundary conditions, which leads to more efficient and accurate training.  The main approach is to divide the numerical approximation into two parts:
 a deterministic function satisfying the boundary condition and a trainable function with the homogeneous condition. This idea was first proposed by Lagaris et al. in \cite{Lagaris1998,Lagaris2000}.
 The exact enforcement of boundary conditions is applied in the deep Galerkin method and deep Ritz method for elliptic problems in  \cite{Chen2020}.	The deep mixed residual method is employed in \cite{Lyu2021,Lyu2022}   for solving  PDEs, they satisfy the  Neumann boundary condition exactly by transforming the boundary condition into a  homogeneous Dirichlet boundary.  In \cite{liu2023a}, the authors propose  a  gradient-assisted PINN for solving nonlinear biharmonic equations,  introducing gradient auxiliary functions to transform the  clamped or simply supported boundary conditions into the Dirichlet boundary conditions and then constructing   composite functions to  satisfy these Dirichlet boundary conditions  exactly. 
 However, introducing  the gradient-based auxiliary function or additional  neural networks into a model    leads to an increase in computation and may  introduce additional errors. 
In \cite{Dong2021a}, the authors  use  the universal approximation property of DNN  and specifically designed periodic layers to ensure that the DNN's solution  satisfies the specified   periodic boundary conditions, including both $C^{\infty}$ and $C^k$ conditions. 
The PINN method proposed in \cite{sukumar2022} exactly satisfies the  Dirichlet, Neumann, and Robin boundary conditions on complex geometries. The main idea is to utilize R-functions and mean value potential fields to construct approximate distance functions, and use transfinite interpolation to obtain approximations.
   A penalty-free neural network method is developed  to solve  second-order boundary-value problems on complex geometries  in \cite{Sheng2021a}   by  using two neural networks to satisfy essential boundary conditions, and introducing a length factor function to decouple the networks.
The boundary-dependent PINNs   in \cite{Xie2023}   solve PDEs  with complex boundary conditions.  The neural network utilizes the radial basis functions    to  construct trial functions that satisfy boundary conditions automatically,   thus avoiding the need for manual trial function design when dealing with complex boundary conditions.

 In this work, we propose two improved RNN methods for solving elliptic equations, including  the Poisson and biharmonic equations.  Based on the observation that the error  of the  RNN method is concentrated around the boundary, we propose two methods to reduce the boundary error.
 The first method called  RNN-Scaling method  adjusts the optimization problem,  resulting in a modified least squares equation.
 The second improved  RNN method  called   RNN-BP method introduces    interpolation techniques    to enforce  the exact  inhomogeneous Dirichlet or clamped boundary conditions     on   rectangular domains.  
 We provide  extensive numerical experiments to compare the accuracy  of the RNN method with the improved  RNN  methods,  varying  different  numbers of collocation points and    width of the last hidden layer. The numerical results confirm the effectiveness of both improved  methods. The main contributions of this paper can be summarized as follows:	 
 \begin{itemize}
 	\item  The RNN-BP method significantly reduces the error of the   RNN method by enforcing the  inhomogeneous Dirichlet or clamped  boundary conditions.  Specifically,  the RNN-BP method directly deals with the clamped boundary condition without introducing the gradient auxiliary variables.
 	As a result,   the optimization problem does not need to introduce the constraints of gradient relationships,    potentially avoiding  additional errors.
 	
 	\item  The RNN-BP method is proved to be  exact  for the  solutions  of the form $u(x,y) = f_1(x)p_1(y) + f_2(y)p_2(x)$. For the Poisson equation,   $p_1$ and $p_2$ are polynomials of degree no higher than 1,  and  $f_1, f_2$  are  functions in $  C(\Omega)$.  For the biharmonic equation, $p_1$ and $p_2$ are polynomials of degree no higher than 3, and $f_1, f_2$  are  functions in $  C^1(\Omega)$. 
 	
 	\item  The RNN-Scaling method increases the weight of   boundary equations in the optimization problem,  resulting in   a more accurate approximation without taking more collocation points.
 \end{itemize}
 
The remainder of this paper is organized as follows. In Sections 2 and 3, we describe the RNN methods for solving the Poisson and biharmonic equations, respectively. In Section 4, we present numerical examples to illustrate the effectiveness of the RNN-Scaling and RNN-BP methods. In Section 5, we provide a conclusion for this paper.
 
 \section{The improved RNN methods for the Poisson equation}
 In this section, we focus on the Poisson equation with the Dirichlet boundary condition   on the two-dimensional  bounded domain $\Omega \in \mathbb{R}^2$ with   boundary $\partial \Omega$:
 \begin{align}
 	\left\{\begin{aligned}\label{equation-Poisson}
 		-\Delta  u(\bx)& =f(\bx),  &\text{in }& \Omega, \\
 		u(\bx)&=g(\bx),  &\text{on }& \partial \Omega.\\
 		%			\frac{	\partial	u}{\partial \mathbf{n}}(\bx)&=g_2(\bx),  &\text{on }& \partial \Omega.
 	\end{aligned}\right.
 \end{align}
 The source term $f(\bx)$ and the boundary  condition $g(\bx)$  are   given functions.

 \subsection{Randomized neural networks}
 	The randomized neural network employs  a fully connected neural network.  Let $L$ denote the number of hidden layers,
 	let   $M$ denote  the number  of neurons in the last hidden layer, and let  $\phi_j(\bx)$ denote the output of the $j$-th neuron  in  the last hidden layer, where $1\leq j \leq M$.
 	%For nonlinear activation function $\sigma: \mathbb{R}\rightarrow\mathbb{R}$, the nonlinear transformations are
 	%\[
 	%\rho_k(\bx) = \sigma(W_k\cdot \bx + \mathbf{b}_k), k=1, 2, \cdots, L,
 	%\]
 	%and the linear transformation of output layer is
 	%\[
 	%\mathbf{y} = \omega\cdot \bx.
 	%\]
 	The fully connected neural network is represented by
 	\[
 	\hat u(\bx) = \sum_{j=1}^M \omega_j \phi_j(\bx) = \omega \Phi(\bx),  \quad \bx \in \Omega, 
 	\]
 	where $\Phi(\bx) = (\phi_1(\bx), \cdots, \phi_M(\bx)) = \sigma (W_{L} \cdot \sigma (\cdots \sigma (W_2 \cdot \sigma(W_1 \cdot \bx+b_1 )+b_2) \cdots )+b_{L} )$ 
 	and $\omega = (\omega_1, \cdots, \omega_M)^{\mathrm{T}}$, \( W_k \in \mathbb{R}^{n_k \times n_{k-1}} \) and \(b_k \in \mathbb{R}^{n_k} \) denote the weight matrices and bias vectors,   respectively, and $n_k$ denotes the number  of neurons of the $k$-th hidden layer.    We   employ  two methods  to generate  weights and biases  in this paper:  one of these methods is the default initialization method in PyTorch,  and the other is the uniform random initialization with a  distribution range between $[-R_m, R_m]$.
 	%The set of  network parameters is denoted by \( \theta = \{W_1, \ldots, W_{K+1}, \mathbf{b}_1, \ldots, \mathbf{b}_{K+1}\} \) and $\theta$ is randomly assigned and remains fixed. The parameter  $\omega$ is adjustable and need to be solved.
 	%\( \hat  u(\bx) \)  represents  the output of the neural network with respect to
 	%input \( \bx \), and the  parameter sets \( \theta \) and \( \omega \). 

 	\begin{figure}[!htbp]
 		\centering 
 		\includegraphics[width=5in]{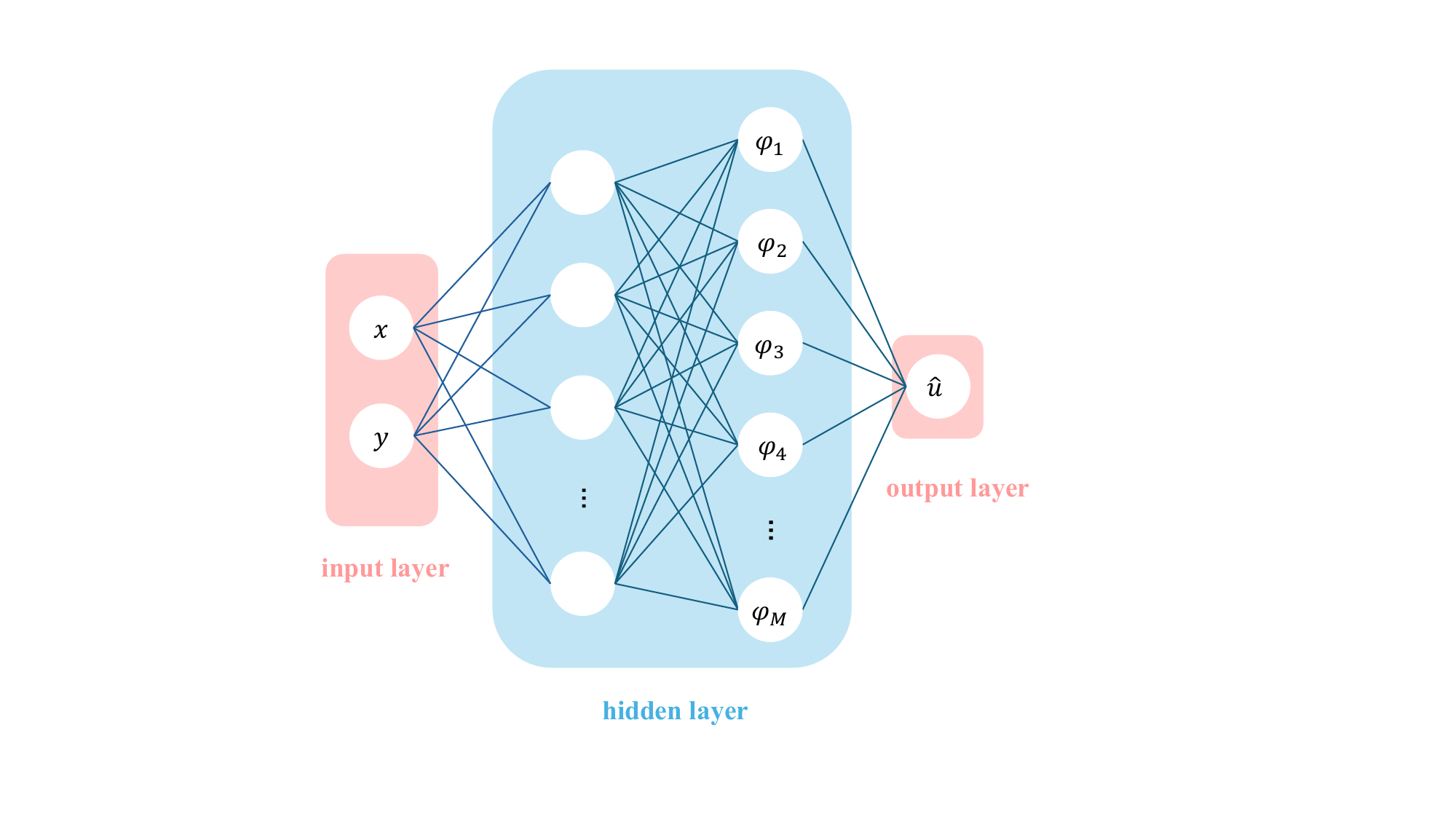}
 		\caption{The architecture of the RNN.}\label{RNN_model}
 	\end{figure}
 	
 	\subsection{RNN method}
 	First,  we select collocation points.  Collocation points are divided into two types:  the interior   points   and the boundary   points.  The  interior collocation points  consist  of $N_f$ points in $\Omega$.  The boundary collocation points consist  of $N_b$ points on  $ \partial\Omega$. 
 	The selection of collocation points is not unique,  they can be random or uniform. In this paper, we  select uniform collocation points.
 	
 	The basic idea of the RNN  is that the weights and biases of the hidden layers are randomly generated and remain fixed. Thus,  we only need to  solve a least squares problem and don't need to train.  
 	The system of linear algebraic equations is
 	\begin{equation}
 		\begin{aligned}\label{LS-1}
 			-\sum_{j=1}^M \omega_j \Delta \varphi_j(\mathbf{x}_f^i) & =f(\mathbf{x}^i_f),   &i=1,\cdots,N_f,
 			\\
 			\sum_{j=1}^M \omega_j  \varphi_j(\mathbf{x}_b^i) & =g(\mathbf{x}_b^i), &i = 1, \cdots, N_b.
 		\end{aligned}
 	\end{equation}
 	Solving this system of equations yields   $\omega$, which consequently yields  the solution  	\( \hat  u(\bx) \).
  
 	We employ the RNN method to solve the Poisson equation \eqref{equation-Poisson} with the exact solution $u=\sin(2\pi x)\sin(2\pi y)$. The absolute error of the RNN method 
 	is plotted in Fig. \ref{error}.
 	\begin{figure}[!htbp]
 		%	\centering 
 		%	\begin{subfigure}[t]{0.31\linewidth}
 			\centering
 			\includegraphics[width=2.5in]{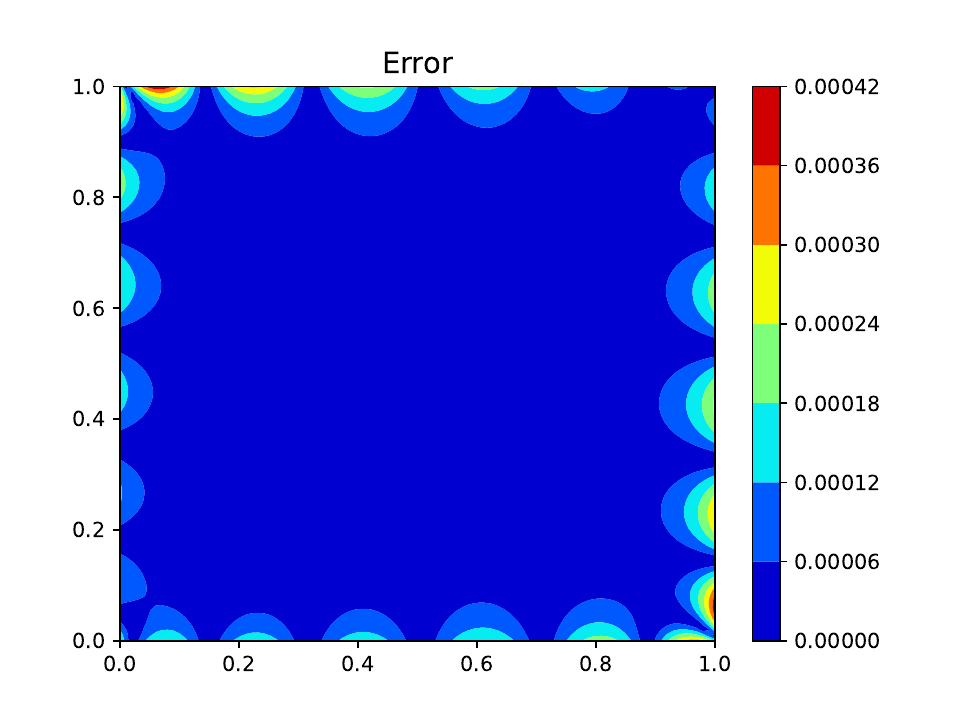}
 			\caption{The absolute  error of the RNN method.} \label{error}
 	\end{figure}
It is observed that the error of the RNN method concentrates around  the boundary $\partial\Omega$. To  improve the accuracy of the RNN method,  it is necessary to approximate more accurately around the boundary.
 	
 	\subsection{RNN-Scaling method}
 	To get a more accurate approximation around the boundary, a straightforward idea is to select a larger $N_b$, would require placing more collocation points on
 	on $\partial\Omega$. However, this may increase the computational cost, and the improvement is not significant. Therefore, we slightly   adjust the  algebraic equations \eqref{LS-1}, to enlarge the weight of  boundary equations.

 	We present the RNN-Scaling method with an example on the square domain. Let the number of collocation points on both the interior and  boundary in both   $x$-direction and $y$-direction be the same, denoted as $N$.
 	 It is obvious that $N_f = N^2$ and $N_f = 4N$.
 The corresponding  equations of the RNN-Scaling method are modified  to
 	\begin{equation}
 		\begin{aligned}\label{LS-sc-Poisson}
 			\frac{1}{N^2}	\sum_{j=1}^M \omega_j L \varphi_j(\mathbf{x}_f^i) & = \frac{1}{N^2} f(\mathbf{x}^i_f),   &i=1,\cdots,N_f,
 			\\
 			\sum_{j=1}^M \omega_j  \varphi_j(\mathbf{x}_b^i) & =g(\mathbf{x}_b^i), &i = 1, \cdots, N_b.
 		\end{aligned}
 	\end{equation}
 Solving the system \eqref{LS-sc-Poisson} of equations   yields the solution  	\( \hat  u(\bx) \).
 	
 	\begin{figure}[!htbp]
 		%	\centering 
 		%	\begin{subfigure}[t]{0.31\linewidth}
 			\centering
 			\includegraphics[width=2.5in]{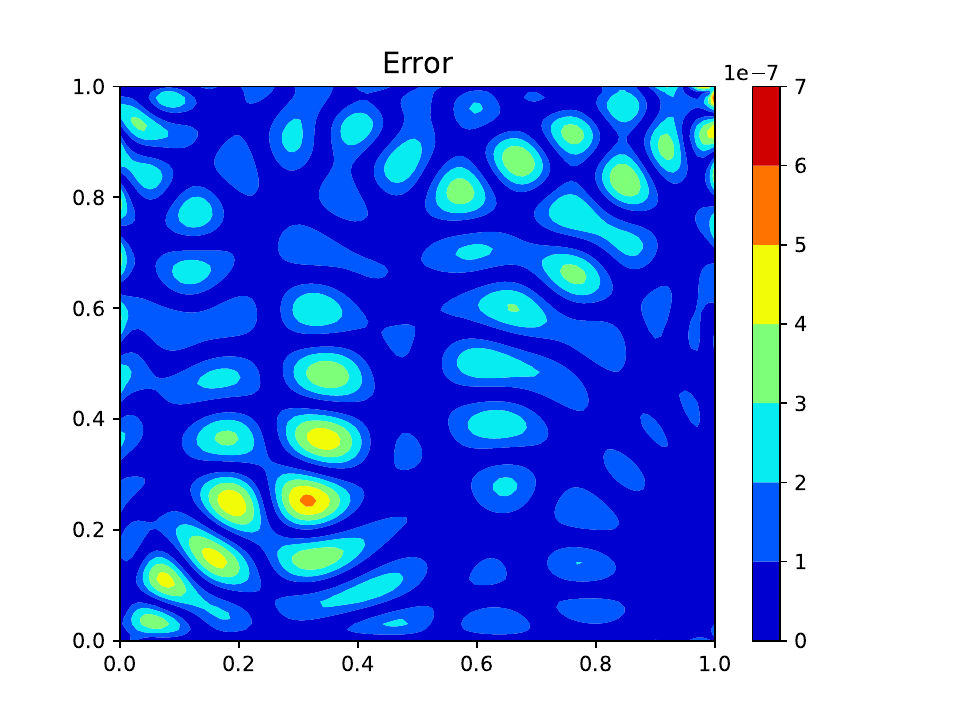}
 			\caption{The absolute  error of the  RNN-Scaling method.} \label{error11}
 			%	\end{subfigure}
 		%	\hfill
 		%	\begin{subfigure}[t]{0.31\linewidth}
 			%		\centering
 			%		\includegraphics[width=\linewidth]{Pics/RNN_scaling_error.pdf}
 			%		\caption{The absolute  error of RNN-Scaling.} \label{}
 			%	\end{subfigure}
 		%	\hfill
 		%	\begin{subfigure}[t]{0.31\linewidth}
 			%		\centering
 			%		\includegraphics[width=\linewidth]{}
 			%		\caption{The absolute  error of RNN-BP method, which is in Subsection \ref{RNN_BP}.} 
 			%	\end{subfigure}
 		%	\caption{The absolute  errors of three RNN methods.} \label{error}
 	\end{figure}
 	We present the absolute  error of the RNN-Scaling method in Fig. \ref{error11}  and observe that the error of the RNN-Scaling method is much reduced around the boundary $\partial\Omega$.  Consequently, 
 	the total  relative $L^2$ error is also reduced by about  $10^3$ orders of magnitude.
 	\begin{remark}
 		The weight coefficient $\frac{1}{N^2}$ is selected intuitively and numerically. In the classical methods, such as the finite element methods
 		and the finite difference methods, the scale of the equations for interior unknowns is usually larger than that of the boundary unknowns by
 		about $\frac{1}{N^2}$ magnitude.  We  test several numerical examples  and find that $\frac{1}{N^2}$ seems to be a proper 
 		selection for the Poisson equation.
 	\end{remark}
 	
 	\begin{remark}
 		The idea of the  RNN-Scaling could be extended to a general domain. For instance,  on a unit circular domain if we take $h=\frac{1}{N}$ as the ``mesh size’’, i.e., the average distance of collocation points. Then the interior collocation points could be the uniformly distributed points in the unit disk at a distance of $h$, and the boundary collocation points could be $[2\pi N]$ uniformly distributed points on the circle. The distribution of collocation points on the unit circle is illustrated in Fig.   \ref{exp4_Poisson}(a).
 		The weight coefficient is also selected to be $\frac{1}{N^2}$. In this case, the total relative  $L^2$ error is also reduced by  1-2 orders of magnitude.
 	\end{remark}
 	
 	\subsection{RNN-BP method}\label{RNN_BP}
 	As shown in Fig. \ref{error11},  although the error of the  RNN-Scaling method has decreased significantly, it still exists on the boundary $\partial\Omega$.  In this subsection, 
 	we propose a boundary processing technique that enforces  the exact   Dirichlet boundary condition. This boundary processing technique  was also proposed  in \cite{Lagaris1998}.
 The advantage of boundary processing is that the boundary condition is imposed on the numerical solution over the entire boundary, rather than just at collocation points.  We now describe the construction of the RNN-BP method.  For simplicity, we consider the unit square domain $\Omega = (0,1)\times (0,1)$  and denote $\bx$ as $(x, y)$.  It should be noted that  the  technique  can be easily extended to   rectangular domains.
 	
 	\begin{figure}[!htbp]
 		\centering 
 		\includegraphics[width=5in]{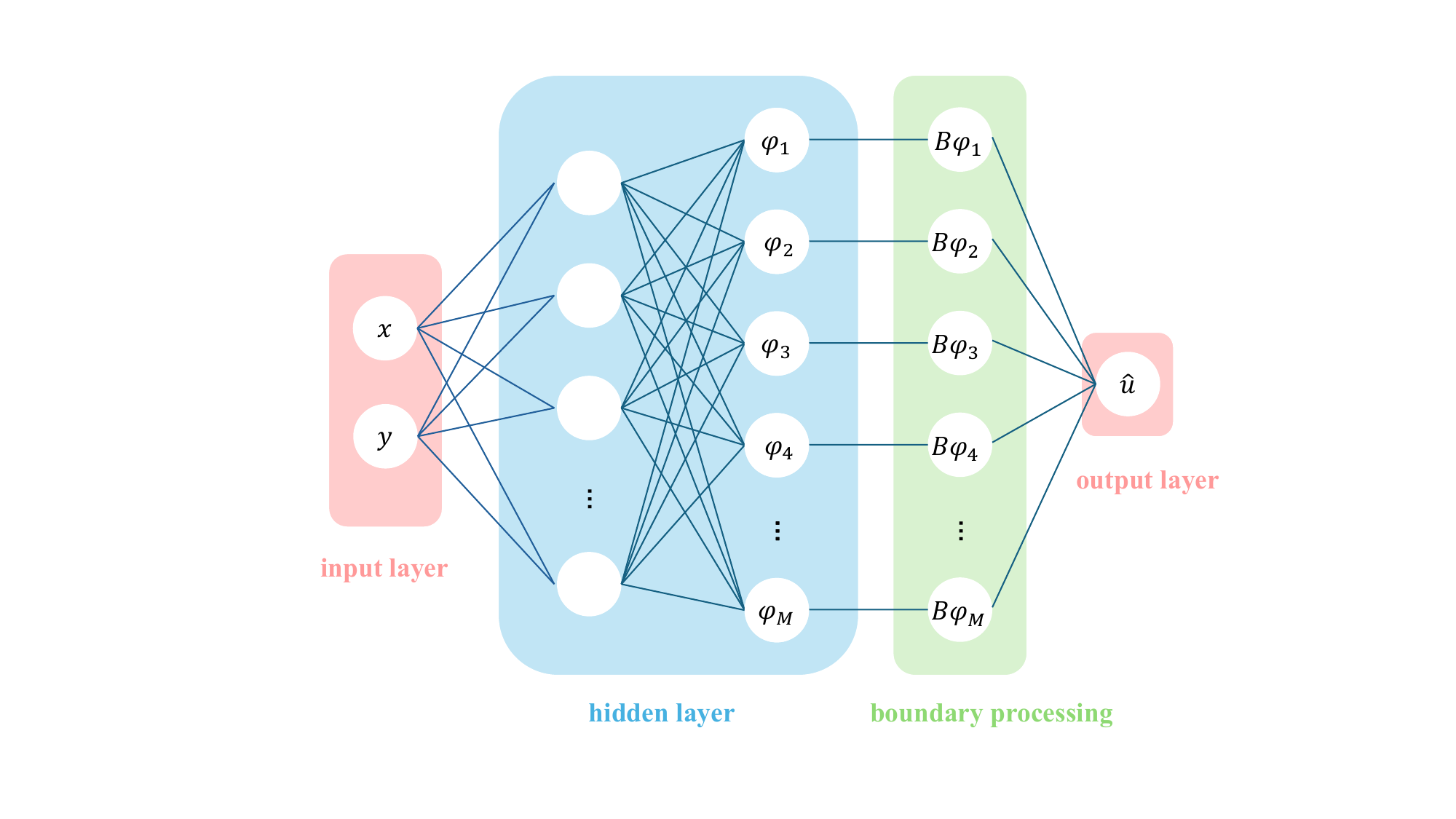}
 		\caption{The architecture of the RNN-BP.}\label{RNN_BP_model}
 	\end{figure}
 	
 	We construct the numerical  solution of the RNN-BP method   as follows:
 	\begin{align}
 		\begin{aligned}\label{u_RNN_BP_Poisson}
 			\hat  u(x,y)=  B(x,y)\sum_{j=1}^{M} \omega_j  \varphi_j(x,y)+  g_D(x, y),
 		\end{aligned}
 	\end{align}
 	where $B$ and $g_D$ should    satisfy
 	\begin{align*}
 		\begin{aligned}
 			B(x,y) &=0, \quad &&\text{on~}  \partial \Omega, \\
 			B(x,y) &\neq 0, \quad &&\text{in~}    \Omega, \\
 			g_D(x,y) &=g(x,y),  \quad&& \text{on~}  \partial \Omega. 
 		\end{aligned}
 	\end{align*} 
 	A straightforward idea is to choose    $B(x,y)= x(1-x)y(1-y)$ for  the Poisson equation.   The  network architecture  is  illustrated in  Fig. \ref{RNN_BP_model}.

 	We next describe the construction of $g_D(x,y)$.  Let  $g_D(x,y) = s(x,y) + P(x,y)$.  We   construct  $P(x,y)$ to  satisfy  the relations  at four  corners:
 	\begin{equation*}
 		\begin{aligned}
 			P(x_i, y_j)&=u(x_i, y_j), \\
 		\end{aligned}
 	\end{equation*}
 	where  $i, j =0, 1$ and $x_0=y_0=0$, $x_1=y_1=1$. 
 	Rewrite the Dirichlet boundary condition  to
 	\begin{align*}
 		\begin{aligned}
 			& u(0, y)=t_1(y),  \quad 0 \leq y \leq 1, \\
 			& u(1, y)=t_2(y),  \quad 0 \leq y \leq 1, \\
 			& u(x, 0)=t_3(x),  \quad 0 \leq x \leq 1, \\
 			& u(x, 1)=t_4(x),  \quad 0 \leq x \leq 1.  
 		\end{aligned}
 	\end{align*}
 By 	using  one-dimensional two-point Lagrange interpolation functions  
 	$l_0(x) =1- x$ and $l_1(x)= x$,    $P(x,y)$  can be expressed as a bilinear Lagrange interpolation function:
 	\begin{equation*}
 		P(x, y)=\left[\begin{array}{c}
 			l_0(x) \\
 			l_1(x) 
 		\end{array}\right]^{\mathrm T}\left[\begin{array}{ll}
 			u(x_0, y_0) & u(x_0, y_1)  \\
 			u(x_1, y_0) & u(x_1, y_1) 
 		\end{array}\right]\left[\begin{array}{c}
 			l_0(y) \\
 			l_1(y) \\
 		\end{array}\right],
 	\end{equation*}
 	or equivalently, 
 	\begin{equation*}
 		P(x, y)=\left[\begin{array}{c}
 			l_0(x) \\
 			l_1(x)
 		\end{array}\right]^{\mathrm T}\left[\begin{array}{llll}
 			t_1(y_0) & t_1(y_1)  \\
 			t_2(y_0)& t_2(y_1)   
 		\end{array}\right]\left[\begin{array}{c}
 			l_0(y) \\
 			l_1(y)
 		\end{array}\right].
 	\end{equation*}
 	Denote $\tilde u(x,y)   \triangleq u(x,y) - P(x,y)$.	Then,  we construct  $s(x,y)$ to satisfy the   Dirichlet condition of $\tilde u$ exactly, i.e., 
 	\begin{align*}
 		s(x,y)= \tilde u(x,y), \quad  \text{on } \partial \Omega. 
 	\end{align*}
 	The boundary conditions of $\tilde u(x,y)$   are denoted  by $\tilde t_i$, and there  hold   
 	\begin{align*}
 		\begin{aligned}
 			& \tilde t_ 1(y) =  t_1(y) - P(0, y), \quad 0 \leq y \leq 1, \\
 			& \tilde t_ 2(y) =  t_2(y) - P(1, y),   \quad 0 \leq y \leq 1, \\
 			&\tilde t_3(x) = t_3(x) - P(x,0),  \quad 0 \leq x \leq 1, \\
 			&  \tilde t_4(x) = t_4(x) - P(x,1),   \quad 0 \leq x \leq 1. 
 		\end{aligned}
 	\end{align*} 
 	We define $s(x,y)$ as 
 	\begin{align*}
 		\begin{aligned}
 			s(x, y) & =l_0(x)\tilde t_1(y)+l_1(x) \tilde t_2(y)+l_0(y)\tilde t_3(x)+l_1(y)\tilde t_4(x).
 		\end{aligned}
 	\end{align*}
  
 After constructing  $s(x,y)$ and $P(x,y)$,  the corresponding equations of the RNN-BP method are derived as follows.
 	\begin{equation}\label{normal-eq-Poisson}
 		\begin{aligned} 
 			-\sum_{j=1}^M \omega_j \Delta  (B \varphi_j)(\mathbf{x}_f^i) & =f(\mathbf{x}^i_f)  +\Delta  s(\bx_f^i) +\Delta P(\bx_f^i),   &i=1,\cdots,N_f.
 		\end{aligned}
 	\end{equation}

 	\begin{theorem}\label{th-exact-bd-Poisson}
 		The numerical solution  \eqref{u_RNN_BP_Poisson}  of the RNN-BP method satisfies the  Dirichlet boundary  condition exactly.
 	\end{theorem}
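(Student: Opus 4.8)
The plan is to exploit the additive structure $\hat u = B\sum_{j}\omega_j\varphi_j + g_D$ together with the complementary boundary behaviour that $B$ and $g_D$ are designed to have. First I would restrict $\hat u$ to $\partial\Omega$: since $B(x,y)=0$ there by construction, the entire network term $B(x,y)\sum_{j=1}^M \omega_j\varphi_j(x,y)$ vanishes on $\partial\Omega$ regardless of the coefficients $\omega_j$. This immediately reduces the claim to showing that $g_D = s + P$ agrees with $g$ on each of the four edges, so the weights obtained from the least-squares system \eqref{normal-eq-Poisson} play no role in the boundary identity.

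The core of the argument is a corner-consistency observation for the bilinear interpolant $P$. Because $P$ reproduces the four corner values $u(x_i,y_j)$ exactly, each edge correction $\tilde t_i$ vanishes at its two endpoints; for instance $\tilde t_3(0) = t_3(0) - P(0,0) = u(0,0) - u(0,0) = 0$, and likewise $\tilde t_3(1) = \tilde t_4(0) = \tilde t_4(1) = 0$, with the analogous statements for $\tilde t_1$ and $\tilde t_2$ at the endpoints $y=0,1$. I would record this as the key lemma: the transfinite-interpolation pieces switch off precisely at the corners where a perpendicular edge takes over.

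With this in hand I would verify the identity edge by edge using $l_0(0)=1$, $l_1(0)=0$, $l_0(1)=0$, $l_1(1)=1$. On the edge $x=0$, the definition of $s$ evaluates to
\begin{align*}
s(0,y) = l_0(0)\tilde t_1(y) + l_1(0)\tilde t_2(y) + l_0(y)\tilde t_3(0) + l_1(y)\tilde t_4(0),
\end{align*}
which collapses to $s(0,y) = \tilde t_1(y) = t_1(y) - P(0,y)$, whence $g_D(0,y) = s(0,y)+P(0,y) = t_1(y) = g(0,y)$. The remaining three edges follow identically: on $y=0$ the perpendicular contributions $\tilde t_1(0)$ and $\tilde t_2(0)$ drop out and $s(x,0)=\tilde t_3(x)$, giving $g_D(x,0)=t_3(x)=g(x,0)$, and by the $x\leftrightarrow y$ symmetry of the construction the edges $x=1$ and $y=1$ are handled the same way.

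The step I expect to be the only genuine obstacle is the corner bookkeeping of the middle paragraph: one must confirm that the two ``perpendicular'' contributions to $s$ truly cancel at every corner, so that superposing the four one-dimensional edge data does not double-count the corner values. This is exactly what the choice of $P$ as the corner-matching bilinear interpolant secures, and once the endpoint-vanishing of each $\tilde t_i$ is established the four edge verifications are routine.
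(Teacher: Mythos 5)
Your proposal is correct and follows essentially the same route as the paper's proof: the network term is killed on $\partial\Omega$ by the factor $B$, and the edge identity $s(0,y)=\tilde t_1(y)$ (and its analogues) is obtained from $l_0(0)=1$, $l_1(0)=0$ together with the endpoint-vanishing $\tilde t_i(0)=\tilde t_i(1)=0$. The only difference is cosmetic: you spell out why the $\tilde t_i$ vanish at the corners (corner-matching of the bilinear interpolant $P$), a fact the paper simply asserts.
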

 	\begin{proof} 	
 		Our aim is to prove that $	\hat  u(x,y) -P(x,y) =\tilde u(x,y)$ on $\partial \Omega$, i.e., $\sum_{i=1}^{M} \omega_i x(1-x)y(1-y) \varphi_i(x,y) + s(x,y) = \tilde u(x,y)$ on $\partial \Omega$.
 		Notice that $\sum_{i=1}^{M} \omega_i x(1-x)y(1-y) \varphi_i(x,y)$ satisfies the homogeneous Dirichlet boundary condition, thus  we just need to prove that $s(0, y)=\tilde t_1(y)$.  
 		It follows   that 
 		$$\begin{aligned}  s(0,y) & =l_0(0)\tilde  t_1(y)+l_1(0) \tilde t_2(y)+l_0(y) \tilde t_3(0)+l_1(y)\tilde  t_4(0) \\ & =\tilde t_1(y),
 		\end{aligned}
 		$$
 		where we use the fact that 	  $\tilde t_i(0)=   \tilde t_i(1)= 0$ for $i=1, \cdots, 4. $
 		
 		The  conditions on the other   boundary   can be proved  similarly. 	It completes the proof.   
 	\end{proof} 
 	
 	\begin{theorem}\label{th1}
 		Assume that the solution to  the least squares problem  \eqref{normal-eq-Poisson}  is uniquely solvable. 
 		Then the  RNN-BP method  is exact for    solutions   of the form 
 		\begin{equation}\label{form-Poisson}
 			f_1(x)p_1(y) + f_2(y)p_2(x),
 		\end{equation}
 		where  $f_1, f_2 \in C(\Omega)$,  $p_1$ and $p_2$ are polynomials of degree no higher than 1.
 	\end{theorem}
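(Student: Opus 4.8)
The plan is to show that the zero coefficient vector $\omega = 0$ already solves the least squares system \eqref{normal-eq-Poisson}, and then to invoke the uniqueness hypothesis. With $\omega = 0$ the numerical solution \eqref{u_RNN_BP_Poisson} collapses to $\hat u = g_D = s + P$, so it suffices to verify two facts: (i) that $g_D$ coincides with the exact solution $u$ throughout $\Omega$ when $u$ has the form \eqref{form-Poisson}, and (ii) that this forces every interior equation in \eqref{normal-eq-Poisson} to hold with zero residual.

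The heart of the argument is (i), namely the identity $s(x,y)+P(x,y)=u(x,y)$ on all of $\Omega$ rather than merely on $\partial\Omega$. I would write $u = f_1(x)p_1(y) + f_2(y)p_2(x)$ and denote by $l_0(x)f_1(0)+l_1(x)f_1(1)$ and $l_0(y)f_2(0)+l_1(y)f_2(1)$ the two-point linear interpolants of $f_1$ and $f_2$. Substituting the four corner values $u(x_i,y_j)$ into the bilinear interpolant $P$ and grouping the $f_1$- and $f_2$-contributions separately, the key simplification is that $l_0(y)p_1(0)+l_1(y)p_1(1)=p_1(y)$ and $l_0(x)p_2(0)+l_1(x)p_2(1)=p_2(x)$, which holds precisely because $p_1,p_2$ have degree at most $1$, so that two-point Lagrange interpolation reproduces them exactly. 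This yields the compact form
\begin{equation*}
P(x,y) = \big(l_0(x)f_1(0)+l_1(x)f_1(1)\big)p_1(y) + \big(l_0(y)f_2(0)+l_1(y)f_2(1)\big)p_2(x).
\end{equation*}
Computing the corrected boundary data the same way gives $\tilde t_1(y)=p_2(0)\big(f_2(y)-l_0(y)f_2(0)-l_1(y)f_2(1)\big)$, together with analogous expressions for $\tilde t_2,\tilde t_3,\tilde t_4$. Assembling $s$ and using linearity of $p_1,p_2$ once more, the interpolation-error terms carried by $s$ exactly cancel those appearing in $P$, leaving $s+P = f_1(x)p_1(y)+f_2(y)p_2(x) = u$. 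This cancellation is the main obstacle of the proof, and it is exactly where the degree restriction on $p_1,p_2$ is indispensable: a quadratic or higher-degree $p_i$ would not be reproduced by the two-point interpolants, and the residual terms would survive.

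Once (i) is established, (ii) is immediate. Since $-\Delta u = f$ in $\Omega$, we have $\Delta(s+P)=\Delta g_D = \Delta u = -f$ at every interior collocation point, so the right-hand side of \eqref{normal-eq-Poisson} equals $f + \Delta s + \Delta P = f - f = 0$, which matches the left-hand side evaluated at $\omega = 0$. Hence $\omega = 0$ makes every interior equation hold with zero residual and is therefore a least squares solution; by the assumed uniqueness it is \emph{the} solution. Consequently $\hat u = g_D = u$ on $\Omega$, so the RNN-BP method reproduces $u$ exactly, which completes the proof.
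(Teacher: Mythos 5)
Your proposal is correct and follows essentially the same route as the paper: establish the identity $s+P=u$ on all of $\Omega$ (relying on the fact that two-point Lagrange interpolation reproduces polynomials of degree at most one), conclude that the right-hand side of \eqref{normal-eq-Poisson} vanishes so that $\omega=0$ is a least squares solution, and invoke the uniqueness assumption. The only difference is organizational: the paper proves $s+P=u$ by first treating the corner-vanishing case and then reducing to it via an auxiliary linear interpolant $Q$ of $f_1$, and writes out only the single-term case $u=f_1(x)p_1(y)$, whereas you expand $P$ and the corrected boundary data $\tilde t_i$ directly for the full two-term form and exhibit the cancellation explicitly.
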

 	\begin{proof} 
 		For simplicity, we only provide the proof for the case where    $u(x,y)=	f_1(x)p_1(y)$. The proof for   for the case where  $u(x,y)=	f_1(x)p_1(y)+ f_2(y)p_2(x)$ is similar and  thus   is omitted.
 		
 		The first step is to prove that $u(x,y)=s(x,y)$ under the assumption
 		that $u(x,y) =0$ at the four corner points of $\partial \Omega$.  It is obvious $P(x,y) \equiv 0$ from the above assumption.
 		We  analyze the four terms of  $s(x,y) = l_0(x)  t_1(y)+l_1(x)   t_2(y)+l_0(y)  t_3(x)+l_1(y)  t_4(x)$ sequentially. For the term $t_1(y) = f_1(0)p_1(y) \in \mathbb{P}_1$, we have $t_1(0) = t_1(1) = 0$. Consequently,  $t_1(y) \equiv 0$ in $[0,1]$.  Similarly, we have $t_2(y) \equiv 0$ in $[0,1]$. Hence we obtain
 		$$
 		\begin{aligned}
 			s(x,y)&=  l_0(x)  t_1(y)+l_1(x)   t_2(y)+l_0(y)  t_3(x)+l_1(y)  t_4(x) \\
 			& = l_0(y) f_1(x) p_1(0)+l_1(y) f_1(x) p_1(1) \\
 			& =f_1(x)( l_0(y) p_1(0)+l_1(y) p_1(1) ) \\
 			& =f_1(x) p_1(y).
 		\end{aligned}
 		$$
 		
 		The second step is to prove that  
 		$u(x,y)=s(x,y) + P(x,y)$ when $u(x,y) = f_1(x)p_1(y)$.  Define $Q(x)$
 		as a first-order polynomial  that satisfies  $Q(0) = f_1(0)$ and $Q(1) = f_1(1)$. From the definition of $P(x,y)$ and $Q(x)p_1(y) = P(x,y)$ at four corners of $\partial\Omega$, it follows that  $Q(x)p_1(y) \equiv P(x,y)$. Thus, it yields that
 		\begin{equation}\label{proof-1}
 			\begin{aligned}
 				u(x,y)-P(x,y) &  = (f_1(x)-Q(x)) p_1(y) \\
 				& =s(x, y),    
 			\end{aligned}
 		\end{equation}
 		for any $(x,y)\in \Omega$, where the final equation adopts the conclusion of the first step since $f_1(x)-Q(x) = 0$ at four corners of $\partial\Omega$. Substituting \eqref{proof-1} into \eqref{normal-eq-Poisson} yields that $\sum_{j=1}^M \omega_j \Delta  (B(\bx_f^i) \varphi_j(\bx_f^i)) =  0$  for $i=1,\cdots,N_f$, then it is obvious that $\{\omega_j\}_{j=1}^{M} = 0$ is a solution to the least squares problem. Hence, we obtain
 		$$
 		\hat  u(x,y)=  u(x,y), \quad \text{in~}   \Omega,
 		$$ 
 		which completes the proof.
 	\end{proof}
 	
 	\section{The improved RNN  methods for the biharmonic equation}
 	In this section, we focus on the biharmonic equation with  clamped boundary condition   on the   bounded domain  with  boundary $\partial \Omega$:
 	\begin{align}
 		\left\{\begin{aligned}\label{equation}
 			\Delta^2 u(\bx)& =f(\bx),  &\text{in }& \Omega, \\
 			u(\bx)&=g_1(\bx),  &\text{on }& \partial \Omega,\\
 			\frac{	\partial	u}{\partial \mathbf{n}}(\bx)&=g_2(\bx),  &\text{on }& \partial \Omega.
 		\end{aligned}\right.
 	\end{align}
 	The source term $f(\bx)$ and the  boundary  conditions $g_1(\bx)$ and $g_2(\bx)$ are   given functions,    $\mathbf{n}$ denotes the    unit outward normal vector of    $\partial \Omega$.

 	\subsection{RNN method and RNN-Scaling method}
 For the  biharmonic equation,	the    equations of the   RNN  method are
 	\begin{equation*}
 		\begin{aligned}
 			\sum_{j=1}^M \omega_j 	\Delta^2 \varphi_j(\mathbf{x}_f^i) & =f(\mathbf{x}^i_f),   &i=1,\cdots,N_f,
 			\\
 			\sum_{j=1}^M \omega_j  \varphi_j(\mathbf{x}_b^i) & =g_1(\mathbf{x}_b^i), &i = 1, \cdots, N_b,
 			\\
 			\sum_{j=1}^M \omega_j   \frac{\partial \varphi_j}{\partial \bn} (\mathbf{x}_b^i)& =g_2(\mathbf{x}^i_b),   &i=1,\cdots,N_b,
 		\end{aligned}
 	\end{equation*}
 	where  $N_f$ and  $N_b$ still represent  the  numbers of interior and boundary collocation points, respectively. 
 	Solving this system of equations  yields   $\omega$, which consequently obtains the solution  	\( \hat  u(\bx) \).

 	For the biharmonic equation,  we also observe that the    error of the RNN method is   concentrated around the boundary.  Therefore,  similar to the Poisson equation,  it is necessary to develop an improved method to reduce the error around the boundary.
 	
 	We   describe   the RNN-Scaling method on the square domain. Let the numbers of  collocation points on both interior and boundary in the $x$-direction and $y$-direction be the same, still denoted as $N$, then $N_f = N^2$ and $N_b = 4N$. 
 The least squares problem is then adjusted to
 	\begin{equation}
 		\begin{aligned}\label{LS-2}
 			\frac{1}{N^4}	\sum_{j=1}^M \omega_j 	\Delta^2 \varphi_j(\mathbf{x}_f^i) & = 	\frac{1}{N^4} f(\mathbf{x}^i_f),   &i=1,\cdots,N_f,
 			\\
 			\sum_{j=1}^M \omega_j  \varphi_j(\mathbf{x}_b^i) & =g_1(\mathbf{x}_b^i), &i = 1, \cdots, N_b,
 			\\
 			\frac{1}{N}	\sum_{j=1}^M \omega_j   \frac{\partial \varphi_j}{\partial \bn} (\mathbf{x}_b^i)& = 	\frac{1}{N} g_2(\mathbf{x}^i_f),   &i=1,\cdots,N_b.
 		\end{aligned}
 	\end{equation}
 	Solving the  system \eqref{LS-2}    obtains  the numerical solution  	\( \hat  u(\bx) \).
 	
 	\begin{remark}
 		The idea of the RNN-Scaling could also be extended to general domain for the biharmonic equation. For instance, on a unit circular domain,  if we take $h=\frac{1}{N}$ as the ``mesh size’’,  then the interior collocation points could be the uniformly distributed points in the unit disk at a distance of $h$, and the boundary collocation points could be $[2\pi N]$ uniformly distributed points on the circle. The weight coefficients are also chosen to be $\frac{1}{N^4}$ and $\frac{1}{N}$. The total  relative $L^2$ error is also reduced by  1-2 orders of magnitude.
 	\end{remark}
 	
 	\subsection{RNN-BP method}
 	  We observe that the error of the  RNN-Scaling method is  reduced  significantly, but still exists on the boundary. 
 Therefore,  we enforce the neural network to satisfy the exact clamped  boundary condition   on the square  domain.
 	
The   neural network of RNN-BP method  for the biharmonic equation is 
 	\begin{align}
 		\begin{aligned}\label{u_SNN}
 			\hat  u(x,y)= B(x,y) \sum_{i=1}^{M} \omega_i   \varphi_i(x,y)+  g_D(x,y),
 		\end{aligned}
 	\end{align}
 	where  $B(x,y)$ and $g_D(x,y)$ should satisfy the following conditions:
 	\begin{equation*}
 		\begin{aligned}
 			B(x,y)  &=0, \quad &&\text{on~}  \partial \Omega, \\
 			\frac	{\partial B} {\partial \bn }(x,y) &=0, \quad &&\text{on~}  \partial \Omega, \\
 			B(x,y)  &\neq 0, \quad &&\text{in~}    \Omega, \\
 			g_D(x,y)  &=g_1(x,y),  \quad &&\text{on~}  \partial \Omega, \\
 			\frac	{\partial { g_D} }{\partial \bn }(x,y) &=g_2(x,y), \quad &&\text{on~}  \partial \Omega.
 		\end{aligned}
 	\end{equation*} 
 	For the biharmonic equation,  we  define   $B(x,y)= x^2(1-x)^2y^2(1-y)^2$.

 	Next, we provide the construction  of $g_D(x,y)$.  Let $g_D(x,y) = s(x,y) + P(x,y)$.
 We	construct  $P(x,y)$ to  satisfy    the following relations  at four  corners:
 	\begin{equation*}
 		\begin{aligned}
 			P(x_i, y_j)&=u(x_i, y_j), \\
 			P_x(x_i, y_j)&=u_x(x_i, y_j), \\
 			P_y(x_i, y_j)&=u_y(x_i, y_j), \\
 			P_{x y}(x_i,  y_j) &=u_{x y}(x_i, y_j),
 		\end{aligned}
 	\end{equation*}
 	where $i, j =0, 1$,  $x_0 =y_0=0$, $x_1 = y_1 =1$. 
 	Rewrite the clamped boundary condition   \eqref{equation}  to 
 	\begin{align*}
 		\begin{aligned}
 			& u(0, y)=t_1(y),  \quad \frac{\partial u}{\partial x}(0,y) =h_1(y), \quad 0 \leq y \leq 1, \\
 			& u(1, y)=t_2(y),  \quad \frac{\partial u}{\partial x}(1,y)=h_2(y), \quad 0 \leq y \leq 1, \\
 			& u(x, 0)=t_3(x),  \quad \frac{\partial u}{\partial y}(x,0)=h_3(x), \quad 0 \leq x \leq 1, \\
 			& u(x, 1)=t_4(x),  \quad \frac{\partial u}{\partial y}(x,1)=h_4(x), \quad 0 \leq x \leq 1. 
 		\end{aligned}
 	\end{align*}
 	Recall the two-point Hermite interpolation basis functions 
 	\begin{equation*}
 		\begin{aligned}
 			&H_i(x)=l_i^2(x)\left(1-2 l_i^{\prime}\left(x_i\right)\left(x-x_i\right)\right),  \\
 			&	G_i(x)=l_i^2(x)\left(x-x_i\right),
 		\end{aligned}
 	\end{equation*}
 	for  $i=0,1$,  where   $l_0(x) $ and $l_1(x)$ are the first-order  Lagrange interpolation functions.
We present 	$P(x,y)$  with the following  expression:
 	\begin{equation*}
 		P(x, y)=\left[\begin{array}{c}
 			H_0(x) \\
 			H_1(x) \\
 			G_0(x) \\
 			G_1(x)
 		\end{array}\right]^{\mathrm T}\left[\begin{array}{llll}
 			u(x_0, y_0) & u(x_0, y_1) & u_y(x_0, y_0) & u_y(x_0, y_1) \\
 			u(x_1, y_0) & u(x_1, y_1) & u_y(x_1, y_0) & u_y(x_1, y_1) \\
 			u_x(x_0, y_0) & u_x(x_0, y_1) & u_{x y}(x_0, y_0) & u_{x y}(x_0, y_1) \\
 			u_x(x_1, y_0) & u_x(x_1, y_1) & u_{x y}(x_1, y_0) & u_{x y}(x_1, y_1)
 		\end{array}\right]\left[\begin{array}{c}
 			H_0(y) \\
 			H_1(y) \\
 			G_0(y) \\
 			G_1(y)
 		\end{array}\right].
 	\end{equation*}
Alternatively, the expression for $P(x,y)$ can be written as 
 	\begin{equation*}
 		P(x, y)=\left[\begin{array}{c}
 			H_0(x) \\
 			H_1(x) \\
 			G_0(x) \\
 			G_1(x)
 		\end{array}\right]^{\mathrm T}\left[\begin{array}{llll}
 			t_1(y_0) & t_1(y_1) & h_3(x_0) & h_4(x_0) \\
 			t_2(y_0)& t_2(y_1) & h_3(x_1) & h_4(x_1) \\
 			h_1(y_0) &  h_1(y_1) & h_3'(x_0) & h_4'(x_0) \\
 			h_2(y_0) &  h_2(y_1) &  h_3'(x_1) & h_4'(x_1) 
 		\end{array}\right]\left[\begin{array}{c}
 			H_0(y) \\
 			H_1(y) \\
 			G_0(y) \\
 			G_1(y)
 		\end{array}\right].
 	\end{equation*}
 	
 	Then,  we construct  $s(x,y)$ to satisfy the  clamped condition of $\tilde u(x,y)$, where  $\tilde u(x,y)   \triangleq u(x,y) - P(x,y)$. 
 	\begin{align*}
 		\left\{\begin{aligned}\label{tildeu}
 			s(x,y)&= \tilde u(x,y),  &\text{on }& \partial \Omega,\\
 			\frac{	\partial	s}{\partial \mathbf{n}}(x,y)&= \frac{	\partial	\tilde u}{\partial \mathbf{n}}(x,y),  &\text{on }& \partial \Omega.
 		\end{aligned}\right.
 	\end{align*}
 	The boundary functions of $\tilde u(x,y)$   are denoted  by $\tilde t_i$  and  $\tilde h_i$, and there  hold  
 	\begin{align*}
 		\begin{aligned}
 			& \tilde t_ 1(y) =  t_1(y) - P(0, y),  \quad \tilde h_1(y)  =h_1(y)-\frac{\partial P}{\partial x}(0,y), \quad 0 \leq y \leq 1, \\
 			& \tilde t_ 2(y) =  t_2(y) - P(1, y),  \quad  \tilde h_2(y) =  h_2(y) - \frac{\partial P}{\partial x}(1,y), \quad 0 \leq y \leq 1, \\
 			&\tilde t_3(x) = t_3(x) - P(x,0),  \quad  \tilde h_3(x) = h_3(x) - \frac{\partial P}{\partial y}(x,0), \quad 0 \leq x \leq 1, \\
 			&  \tilde t_4(x) = t_4(x) - P(x,1),  \quad  \tilde h_4(x) = h_4(x) - \frac{\partial P}{\partial y}(x,1), \quad 0 \leq x \leq 1. 
 			&
 		\end{aligned}
 	\end{align*} 
 	Define $s(x, y) $ as
 	\begin{align*}
 		\begin{aligned}
 			s(x, y) & =H_0(x)\tilde t_1(y)+H_1(x) \tilde t_2(y)+H_0(y)\tilde t_3(x)+H_1(y)\tilde t_4(x) \\
 			& +G_0(x)\tilde h_1(y)+G_1(x)\tilde h_2(y)+G_0(y)\tilde h_3(x)+G_1(y) \tilde h_4(x).
 		\end{aligned}
 	\end{align*}

 	Finally, the corresponding   equations for the RNN-BP method  are 
 	\begin{equation}\label{normal-eq-biharmonic}
 		\begin{aligned} 
 			\sum_{j=1}^M \omega_j \Delta^2 (B \varphi_j) (\mathbf{x}_f^i) & =f(\mathbf{x}^i_f)-\Delta^2 s(\bx_f^i) - \Delta^2  P(\bx_f^i),   &i=1,\cdots,N_f.
 		\end{aligned}
 	\end{equation}
 	
 	We emphasize that the clamped boundary condition can be exactly imposed. This boundary processing method   on rectangular domains  is  applicable to any PDEs with the clamped  boundary condition.
 	
 	\begin{theorem}\label{th-exact-bd-biharmonic}
 		The numerical solution  \eqref{u_SNN}  of the RNN-BP method satisfies the clamped boundary condition of \eqref{equation} exactly.
 	\end{theorem}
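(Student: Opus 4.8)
The plan is to mirror the argument of Theorem~\ref{th-exact-bd-Poisson}, splitting the numerical solution \eqref{u_SNN} into the network part $B(x,y)\sum_{i=1}^M \omega_i \varphi_i(x,y)$ and the deterministic part $g_D = s + P$, and verifying the two clamped conditions separately. First I would dispose of the network part: since $B(x,y)=0$ and $\frac{\partial B}{\partial \bn}(x,y)=0$ on $\partial\Omega$, the product rule gives $\frac{\partial}{\partial \bn}\big(B\sum_i\omega_i\varphi_i\big) = \frac{\partial B}{\partial \bn}\sum_i\omega_i\varphi_i + B\,\frac{\partial}{\partial \bn}\sum_i\omega_i\varphi_i$, and both summands vanish on $\partial\Omega$. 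Hence the network part contributes nothing to either $\hat u$ or $\frac{\partial \hat u}{\partial \bn}$ on the boundary, and it suffices to show that $g_D$ satisfies $g_D = g_1$ and $\frac{\partial g_D}{\partial \bn}=g_2$ on $\partial\Omega$. Because $g_1 = u$ and $g_2 = \frac{\partial u}{\partial\bn}$ there and $g_D = s + P$, the goal reduces to proving $s = \tilde u$ and $\frac{\partial s}{\partial \bn} = \frac{\partial \tilde u}{\partial \bn}$ on $\partial\Omega$, where $\tilde u = u - P$.

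I would verify this on one edge, say $x=0$, and invoke symmetry for the remaining three. Substituting $x=0$ into the explicit formula for $s(x,y)$ and using the nodal values of the Hermite basis, namely $H_0(0)=1$, $H_1(0)=G_0(0)=G_1(0)=0$, $H_0'(0)=H_1'(0)=G_1'(0)=0$ and $G_0'(0)=1$, collapses $s(0,y)$ to $\tilde t_1(y)$ plus the cross terms $H_0(y)\tilde t_3(0)+H_1(y)\tilde t_4(0)+G_0(y)\tilde h_3(0)+G_1(y)\tilde h_4(0)$, and collapses $\frac{\partial s}{\partial x}(0,y)$ to $\tilde h_1(y)$ plus the cross terms $H_0(y)\tilde t_3'(0)+H_1(y)\tilde t_4'(0)+G_0(y)\tilde h_3'(0)+G_1(y)\tilde h_4'(0)$. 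Since on the edge $x=0$ the outward normal points in the negative $x$-direction, we have $\frac{\partial}{\partial\bn}=-\frac{\partial}{\partial x}$, so matching $\frac{\partial s}{\partial x}$ with $\frac{\partial \tilde u}{\partial x}$ is equivalent to matching the normal derivatives.

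The crux of the argument is therefore to show that all these cross terms vanish, i.e. that $\tilde t_3,\tilde t_4,\tilde h_3,\tilde h_4$ together with their derivatives $\tilde t_3',\tilde t_4',\tilde h_3',\tilde h_4'$ are zero at the corner $0$ (and, identically, at $1$). This is exactly the role of the corner-matching conditions imposed on $P$: for instance $\tilde t_3(x)=u(x,0)-P(x,0)$ gives $\tilde t_3(0)=u(0,0)-P(0,0)=0$; $\tilde h_3(x)=u_y(x,0)-P_y(x,0)$ gives $\tilde h_3(0)=u_y(0,0)-P_y(0,0)=0$; $\tilde t_3'(x)=u_x(x,0)-P_x(x,0)$ gives $\tilde t_3'(0)=u_x(0,0)-P_x(0,0)=0$; and, most importantly, $\tilde h_3'(x)=u_{xy}(x,0)-P_{xy}(x,0)$ gives $\tilde h_3'(0)=u_{xy}(0,0)-P_{xy}(0,0)=0$. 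I expect the main obstacle to be precisely this last identity: it is the reason $P$ must be built from the full tensor-product Hermite interpolation matching not only $u$, $u_x$, $u_y$ but also the mixed derivative $u_{xy}$ at every corner; dropping the $u_{xy}$ data would leave a nonvanishing $\tilde h_3'(0)$ and spoil the normal-derivative match. With the cross terms eliminated one obtains $s(0,y)=\tilde t_1(y)=\tilde u(0,y)$ and $\frac{\partial s}{\partial x}(0,y)=\tilde h_1(y)=\frac{\partial\tilde u}{\partial x}(0,y)$, hence the clamped data of $\tilde u$ is reproduced on the edge $x=0$. Repeating the computation on the three remaining edges then shows that $g_D=s+P$ satisfies $g_D=g_1$ and $\frac{\partial g_D}{\partial\bn}=g_2$ on all of $\partial\Omega$, and therefore so does $\hat u$, which completes the proof.
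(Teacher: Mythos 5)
Your proposal is correct and follows essentially the same route as the paper's proof: both reduce the claim to showing that $s$ reproduces the clamped data of $\tilde u=u-P$ on each edge, using the nodal values of the Hermite basis together with the vanishing of $\tilde t_i,\tilde h_i$ and their derivatives at the corners (the paper's relations \eqref{prove1}), which in turn follow from the corner-matching of $P$ including the mixed derivative $u_{xy}$. Your write-up is slightly more explicit about the product rule for the network part, the sign of the outward normal, and the condition $\tilde t_i'(0)=\tilde t_i'(1)=0$ that the paper uses but does not list in \eqref{prove1}, but the argument is the same.
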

 	\begin{proof}
 		If we can prove that $\hat  u(x,y)-P(x,y) = \sum_{i=1}^{M} \omega_i B(x,y) \varphi_i(x,y)+  s(x, y)$ satisfies the clamped boundary condition of $\tilde u(x,y )$,  then the theorem follows immediately.  
 		Since $P(x,y)$ is the cubic Hermit interpolation of $u(x,y)$ on $\bar \Omega$, there hold 
 		\begin{align}
 			\begin{aligned}\label{prove1}
 				& \tilde t_i(0)= \tilde h_i(0)=\tilde h_i'(0) = 0, \\
 				&\tilde  t_i(1)= \tilde  h_i(1)=\tilde h_i'(1) = 0, 
 			\end{aligned}
 		\end{align}
 		for $i=1, \cdots, 4$.

 		It is easy to check  that $\sum_{i=1}^{M} \omega_i x^2(1-x)^2 y^2(1-y)^2 \varphi_i(x,y)$ satisfies the homogeneous clamped  boundary condition,  so  we need to prove that $s(0, y)=\tilde t_1(y),    \dfrac{\partial s}{\partial x}(0, y)  =\tilde h_1(y)$.   It follows  from \eqref{prove1} that 
 		$$\begin{aligned}  s(0,y) & =H_0(0)\tilde t_1(y)+H_1(0)\tilde  t_2(y)+H_0(y) \tilde t_3(0)+H_1(y)\tilde  t_4(0) \\ & +G_0(0)\tilde  h_1(y)+G_1(0) \tilde h_2(y)+G_0(y)\tilde  h_3(0)+G_1(y)\tilde  h_4(0) \\ & =\tilde t_1(y),
 			\\
 			\frac{\partial s}{\partial x}(0, y)  & =H_0^{\prime}(0) \tilde t_1(y)+H_1^{\prime}(0)\tilde  t_2(y)+H_0(y) \tilde t_3^{\prime}(0)+H_1(y)\tilde  t_4^{\prime}(0) \\ & +G_0^{\prime}(0) \tilde h_1(y)+G_1^{\prime}(0) \tilde h_2(y)+G_0(y) \tilde h_3^{\prime}(0)+G_1(y)\tilde  h_4^{\prime}(0) \\ & =\tilde h_1(y).\end{aligned}
 		$$
 		The clamped conditions on the other  boundary  can be proved similarly.	It completes the proof.
 	\end{proof}

 	\begin{theorem}
 		Assume that the solution to the least squares problem  \eqref{normal-eq-biharmonic}  is uniquely solvable.
 		Then the  RNN-BP method  is exact for   solutions   of the form 
 		\begin{equation}\label{form-bihar}
 			f_1(x)p_1(y) + f_2(y)p_2(x),
 		\end{equation}
 		where $f_1, f_2 \in C^1(\Omega)$,   $p_1$ and $p_2$ are polynomials of degree no higher than 3.
 	\end{theorem}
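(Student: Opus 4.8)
The plan is to follow the same two-step strategy as in the proof of Theorem~\ref{th1}, replacing the bilinear Lagrange interpolant by the bicubic Hermite interpolant $P$ and the linear interpolation facts by their cubic counterparts. By linearity of $P$, $s$ and of the whole construction \eqref{u_SNN} in $u$, together with the $x\leftrightarrow y$ symmetry of the scheme, it suffices to treat $u(x,y)=f_1(x)p_1(y)$ with $p_1\in\mathbb{P}_3$ and $f_1\in C^1(\Omega)$; the term $f_2(y)p_2(x)$ is handled identically and the two contributions are added. The goal is to show that $u=s+P$ holds identically on $\Omega$. Once this is established, substituting $\Delta^2 u=f$ into \eqref{normal-eq-biharmonic} makes its right-hand side $f-\Delta^2 s-\Delta^2 P=\Delta^2(u-s-P)$ vanish at every interior collocation point, so $\{\omega_j\}=0$ solves the least squares system; by the assumed unique solvability $\omega\equiv 0$, and then \eqref{u_SNN} gives $\hat u=g_D=s+P=u$.

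First I would carry out the analogue of Step~1: assuming that $u$ together with $u_x,u_y,u_{xy}$ vanishes at all four corners (equivalently $P\equiv 0$), I claim $s=u$. The boundary functions associated with the left and right edges, $t_1(y)=f_1(0)p_1(y)$, $t_2(y)=f_1(1)p_1(y)$, $h_1(y)=f_1'(0)p_1(y)$ and $h_2(y)=f_1'(1)p_1(y)$, are univariate polynomials in $\mathbb{P}_3$, and the vanishing corner data forces $t_1(0)=t_1'(0)=t_1(1)=t_1'(1)=0$, and likewise for $t_2,h_1,h_2$. A cubic with a double root at both $0$ and $1$ would be divisible by $y^2(1-y)^2$, hence must be identically zero; this is exactly where $\deg p_1\le 3$ is used. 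The corresponding terms of $s$ therefore drop out, and what remains collapses, after factoring $f_1(x)$, to $f_1(x)$ times the two-point cubic Hermite interpolant of $p_1$ in $y$; since $p_1\in\mathbb{P}_3$ this interpolant reproduces $p_1$ exactly, yielding $s=f_1(x)p_1(y)=u$.

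Next I would handle the general case via the analogue of Step~2. Let $Q\in\mathbb{P}_3$ be the two-point cubic Hermite interpolant of $f_1$, so that $Q,Q'$ match $f_1,f_1'$ at $x=0,1$ (here $f_1\in C^1$ is needed both for the derivative data and for $Q$ to be well defined). The bicubic $Q(x)p_1(y)$ then matches all sixteen Hermite data $u,u_x,u_y,u_{xy}$ of $u$ at the four corners, so by uniqueness of bicubic Hermite interpolation $P(x,y)\equiv Q(x)p_1(y)$. Consequently $u-P=(f_1-Q)(x)\,p_1(y)$ is again of product form, with $f_1-Q\in C^1$ and $f_1-Q,(f_1-Q)'$ vanishing at $x=0,1$, so all corner Hermite data of $u-P$ vanish. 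Since the $s$ built in the scheme is assembled from $\tilde t_i,\tilde h_i$, i.e.\ the clamped boundary data of $\tilde u=u-P$, it coincides with the Step~1 construction applied to $u-P$, and Step~1 gives $s=u-P$. This proves $u=s+P$ and completes the argument.

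The main obstacle is not a single hard estimate but the careful verification of the two interpolation-theoretic facts on which the reduction rests: that a polynomial of degree $\le 3$ vanishing together with its first derivative at both endpoints is identically zero, and that the tensor-product bicubic $Q(x)p_1(y)$ is the unique bicubic Hermite interpolant $P$. Both are precisely where the hypotheses $\deg p_i\le 3$ and $f_i\in C^1$ are consumed, mirroring the roles of $\deg p_i\le 1$ and $f_i\in C$ in Theorem~\ref{th1}. The only bookkeeping needing attention is the identification of the scheme's $s$ with the Step~1 object for $u-P$, which holds because $\tilde t_i,\tilde h_i$ are by definition the clamped boundary data of $u-P$.
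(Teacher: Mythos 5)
Your proposal is correct and follows essentially the same two-step strategy as the paper's own proof (reduce to $P\equiv 0$ via vanishing corner Hermite data, then subtract the tensor-product Hermite interpolant $Q(x)p_1(y)$ and conclude $\omega=0$ from unique solvability). In fact your write-up is cleaner: the paper's text contains typos at exactly the points you make precise, describing $Q$ as a ``first-order polynomial'' with conditions $Q(0)=-f_1'(0)$, $Q(1)=f_1'(1)$ where the intended object is the two-point cubic Hermite interpolant with $Q'(0)=f_1'(0)$, $Q'(1)=f_1'(1)$, as you state.
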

 	\begin{proof} 
 		The proof of this theorem   is similar to that of Theorem \ref{th1}, but for completeness, we  still provide the proof.  We still only provide the proof for the case where  $u(x,y)=	f_1(x)p_1(y)$.
 		
 		We also divide our proof into two steps. The first step is to prove that   $u(x,y)=s(x,y)$ under  the assumption 
 		that $u(x,y) =0$ and $\dfrac{\partial u} {\partial \bn}(x,y) = 0$ at the four corner points of $\partial \Omega$. It is obvious $P(x,y) \equiv 0$ from the above assumption.
 		We  analyze the eight terms of  $s(x,y)$ sequentially.  Similar to the proof of  Theorem \ref{th1},   we have $t_1(y) = t_2(y) = h_1(y) = h_2(y) =  0$ in $[0,1]$.  Then it follows that
 		$$
 		\begin{aligned}
 			s(x,y)&=  H_0(y)  t_3(x)+H_1(y)  t_4(x) + G_0(y)  h_3(x) +  G_1(y)  h_4(x)  \\
 			& =f_1(x)( H_0(y)p_1(0) + H_1(y)p_1(1) - G_0(y)p'_1(0) +G_1(y)p'_1(1) ) \\
 			& =f_1(x) p_1(y).
 		\end{aligned}
 		$$
 		
 		The second step is to prove that  $u(x,y)=s(x,y) + P(x,y)$ when  $u(x,y)=	f_1(x)p_1(y)$. Define $Q(x)$
 		as a first-order polynomial such that $Q(0) = f_1(0)$, $Q(1) = f_1(1)$, $Q(0) = -f'_1(0)$ and $Q(1) = f'_1(1)$. From the definition of $P(x,y)$, it can be inferred that $Q(x)p_1(y) \equiv P(x,y)$ on $\Omega$. It yields that
 		\begin{equation}\label{proof-2}
 			\begin{aligned}
 				u(x,y)-P(x,y) &  = (f_1(x)-Q(x)) p_1(y) \\
 				& =s(x, y),
 			\end{aligned}
 		\end{equation}
 		for any $(x,y)\in \Omega$,  where the final equation of  \eqref{proof-2}  adopts the conclusion of the first step. 
 		%From the conclusion  that $u(x,y) = s(x,y) + P(x,y)$, 
 		Substituting \eqref{proof-2} into \eqref{normal-eq-biharmonic} yields  that $\sum_{j=1}^M \omega_j \Delta  (B(\bx_f^i) \varphi_j(\bx_f^i)) = 0$ for $i=1,\cdots,N_f$, which implies  that $\{\omega_j\}_{j=1}^{M} = 0$ is a   solution to the least squares problem.
 		Hence, we obtain
 		\begin{align*}
 			\begin{aligned}
 				\hat  u(x,y)=  u(x,y), \quad \text{in~}   \Omega.
 			\end{aligned}
 		\end{align*}
 		It completes the proof.
 	\end{proof}
 	
\section{Numerical experiments}
In the numerical experiments,  we  compare  the   performance of  the  three methods by using      relative $L^2$ error 
\begin{equation*}
	\|e\|_{L^2}=\frac{\sqrt{\sum_{i=1}^{N_{test}}\left|\hat u(\bx_i)-u(\bx_i)\right|^2}}{\sqrt{\sum_{i=1}^{N_{test}}\left|u(\bx_i)\right|^2}},
\end{equation*}
where $N_{test}$ denotes  the number  of   uniform collocation points in $\bar \Omega$ and we set $N_{test} = 10^4$ for all examples. Throughout this paper, we    employ  Sin   function  as the activation function.  In all tables,  we   abbreviate RNN-Scaling  as RNN-S for simplicity.
%,  and apply  min-max normalization to map the input data   onto the range of $[-1, 1]$.

\subsection{Poisson equation}
First, we  test our methods  for the Poisson equation.

\subsubsection{Example 1.1}
In the first example, we consider the  exact solution on $\Omega = (0,1)\times (0,1)$:
\begin{equation*}
	u(x,y) = \sin2\pi x \sin2\pi y.
\end{equation*}

The numbers of points in $x$ and $y$ dimensions are set   to be the same,  which are  denoted as  $N$.    We employ a neural network    containing 2 hidden layers with 100 and $M$ neurons, respectively. 
We compare the relative $L^2$ errors of the  RNN, RNN-Scaling, and RNN-BP methods across varying  different numbers  of   collocation points and    $M$.  Tables \ref{exp1-Poisson-km}  and \ref{exp1-Poisson-Rm1}   present the comparison results between the default   and uniform random  initialization $R_m =1$,  respectively.
 
 {\tiny
\begin{table}[!htbp]
	\centering
	\scriptsize
	\caption{Comparison of three  methods with the default  initialization  for Example 1.1.}
	\begin{tabular}{ccccccccccccc}
		\toprule
		\multicolumn{1}{l}{method}    & $N$ & $M$           &  {50}   &  {100}   &  {150}   &  {200}   &  {250}   &  {300}   &  {400}   &  {500}   \\ \midrule
		\multicolumn{1}{l}{RNN}      &  8  & $\|e\|_{L^2}$ & 9.44e-1 & 2.76e-3  & 1.01e-3  & 4.82e-4  & 7.79e-4  & 5.49e-4  & 4.49e-4  & 3.66e-4  \\
		& 12  & $\|e\|_{L^2}$ & 9.15e-1 & 3.13e-4  & 1.11e-6  & 3.99e-7  & 1.83e-7  & 8.14e-7  & 2.53e-7  & 5.97e-7  \\
		& 16  & $\|e\|_{L^2}$ & 9.45e-1 & 6.63e-4  & 2.43e-6  & 2.66e-7  & 1.35e-7  & 4.53e-8  & 2.67e-7  & 1.16e-7  \\
		& 24  & $\|e\|_{L^2}$ & 1.04e+0 & 9.26e-4  & 3.61e-6  & 1.33e-6  & 2.06e-7  & 1.76e-7  & 1.46e-7  & 1.41e-7  \\
		& 32  & $\|e\|_{L^2}$ & 1.14e+0 & 1.04e-3  & 3.99e-6  & 2.51e-6  & 2.84e-7  & 1.87e-7  & 1.27e-6  & 2.83e-7  \\
		& 48  & $\|e\|_{L^2}$ & 1.34e+0 & 1.20e-3  & 4.40e-6  & 2.35e-6  & 1.13e-6  & 1.94e-6  & 1.65e-6  & 1.08e-6  \\ %& 64    & 1.52e+0 & 1.31e-3 & 8.90e-6 & 7.44e-6 & 4.43e-6 & 8.27e-6 & 2.26e-6 & 2.14e-6
		%                 & 64  & 1.52e+0       & 1.31e-3 & 8.90e-6  & 7.44e-6  & 4.43e-6  & 8.27e-6  & 2.26e-6  & 2.14e-6  &          &  &  \\ %& 80    & 1.69e+0 & 1.41e-3 & 1.05e-5 & 7.77e-6 & 6.24e-6 & 9.04e-6 & 2.34e-6 & 2.61e-6
		%                 & 80  & 1.69e+0       & 1.41e-3 & 1.05e-5  & 7.77e-6  & 6.24e-6  & 9.04e-6  & 2.34e-6  & 2.61e-6  &          &  &  \\ 
		\midrule
		\multicolumn{1}{l}{RNN-S } &  8  & $\|e\|_{L^2}$ & 2.09e-2 & 2.76e-3  & 1.01e-3  & 4.82e-4  & 7.79e-4  & 5.49e-4  & 3.06e-4  & 3.66e-4  \\
		& 12  & $\|e\|_{L^2}$ & 1.75e-2 & 2.04e-5  & 4.35e-7  & 2.18e-7  & 3.04e-7  & 3.38e-7  & 6.22e-7  & 5.11e-7  \\
		& 16  & $\|e\|_{L^2}$ & 1.57e-2 & 1.42e-5  & 9.53e-8  & 8.78e-8  & 1.08e-7  & 2.19e-8  & 1.65e-7  & 2.60e-8  \\
		& 24  & $\|e\|_{L^2}$ & 1.43e-2 & 1.31e-5  & 1.75e-7  & 3.29e-7  & 5.43e-8  & 8.21e-8  & 2.56e-7  & 3.32e-8  \\
		& 32  & $\|e\|_{L^2}$ & 1.41e-2 & 1.29e-5  & 3.67e-7  & 3.42e-7  & 1.63e-7  & 8.12e-8  & 2.43e-7  & 5.47e-8  \\
		& 48  & $\|e\|_{L^2}$ & 1.48e-2 & 1.27e-5  & 3.31e-6  & 1.26e-6  & 1.37e-6  & 3.31e-6  & 2.18e-6  & 2.27e-7  \\ %& 64    & 1.60e-2 & 1.31e-5 & 3.73e-6 & 1.83e-6 & 4.10e-6 & 7.12e-6 & 4.76e-6 & 5.21e-6
		%                 & 64  & 1.60e-2       & 1.31e-5 & 3.73e-6  & 1.83e-6  & 4.10e-6  & 7.12e-6  & 4.76e-6  & 5.21e-6  &          &  &  \\ %& 80    & 1.73e-2 & 1.95e-5 & 5.76e-6 & 1.10e-5 & 1.04e-5 & 7.20e-6 & 8.18e-6 & 7.69e-6
		%                 & 80  & 1.73e-2       & 1.95e-5 & 5.76e-6  & 1.10e-5  & 1.04e-5  & 7.20e-6  & 8.18e-6  & 7.69e-6  &          &  &  \\ 
		\midrule
		\multicolumn{1}{l}{RNN-BP}    &  8  & $\|e\|_{L^2}$ & 3.55e-4 & 3.42e-4  & 1.89e-4  & 2.43e-4  & 1.01e-4  & 1.02e-4  & 1.64e-4  & 8.70e-5  \\
		& 12  & $\|e\|_{L^2}$ & 1.09e-4 & 2.37e-7  & 7.55e-8  & 5.59e-8  & 4.99e-8  & 7.74e-8  & 4.37e-8  & 7.58e-8  \\
		& 16  & $\|e\|_{L^2}$ & 1.05e-4 & 3.72e-8  & 7.54e-10 & 4.63e-10 & 6.68e-10 & 8.87e-11 & 6.72e-10 & 3.61e-10 \\
		& 24  & $\|e\|_{L^2}$ & 1.02e-4 & 2.30e-8  & 1.03e-10 & 1.20e-10 & 3.86e-10 & 1.13e-10 & 1.57e-10 & 3.86e-11 \\
		& 32  & $\|e\|_{L^2}$ & 9.89e-5 & 2.24e-8  & 1.63e-10 & 1.29e-10 & 2.36e-10 & 2.66e-10 & 2.74e-10 & 1.17e-10 \\
		& 48  & $\|e\|_{L^2}$ & 9.40e-5 & 2.13e-8  & 3.53e-10 & 5.00e-10 & 1.81e-10 & 1.37e-10 & 4.14e-10 & 4.34e-11 \\ %& 64    & 9.11e-5 & 2.06e-8 & 3.51e-10 & 7.45e-10 & 2.10e-10 & 2.48e-10 & 4.19e-10 & 4.29e-10
		%                 & 64  & 9.11e-5       & 2.06e-8 & 3.51e-10 & 7.45e-10 & 2.10e-10 & 2.48e-10 & 4.19e-10 & 4.29e-10 &          &  &  \\ %& 80    & 8.92e-5 & 2.01e-8 & 6.27e-10 & 7.22e-10 & 1.04e-9 & 8.51e-10 & 1.46e-9 & 1.10e-9
		%                 & 80  & 8.92e-5       & 2.01e-8 & 6.27e-10 & 7.22e-10 & 1.04e-9  & 8.51e-10 & 1.46e-9  & 1.10e-9  &          &  &  \\ 
		\bottomrule
		&     &               &         &
	\end{tabular}%
	\label{exp1-Poisson-km}%
\end{table}%
 }

\begin{table}[!htbp]
	\centering
		\scriptsize
	\caption{Comparison of  three  methods  with uniform random  initialization  ($R_m =1$)  for Example 1.1.}
	\begin{tabular}{cccccccccccccccc}
		\toprule
		\multicolumn{1}{l}{method} & $N$   & $M$           & {50} & {100} &{150} & {200} & {250} & {300} & {400} & {500} \\
		\midrule
		\multicolumn{1}{l}{RNN} & 8     & $\|e\|_{L^2}$ & 2.19e+0 & 7.02e-3 & 4.37e-3 & 3.77e-3 & 2.89e-3 & 2.84e-3 & 4.05e-3 & 2.71e-3 \\
		& 12    & $\|e\|_{L^2}$ & 2.28e+0 & 5.55e-2 & 4.95e-4 & 1.32e-4 & 3.07e-5 & 2.36e-5 & 2.79e-5 & 2.35e-5 \\
		& 16    & $\|e\|_{L^2}$ & 2.47e+0 & 6.08e-2 & 1.88e-3 & 3.34e-5 & 7.44e-7 & 9.90e-7 & 1.22e-7 & 2.34e-7 \\
		& 24    & $\|e\|_{L^2}$ & 2.73e+0 & 6.30e-2 & 2.46e-3 & 8.14e-5 & 3.02e-6 & 1.15e-7 & 2.86e-10 & 4.17e-11 \\
		& 32    & $\|e\|_{L^2}$ & 2.94e+0 & 6.48e-2 & 2.73e-3 & 9.64e-5 & 3.61e-6 & 1.78e-7 & 9.82e-10 & 3.54e-11 \\
		& 48    & $\|e\|_{L^2}$ & 3.25e+0 & 6.83e-2 & 3.05e-3 & 1.11e-4 & 4.24e-6 & 2.06e-7 & 1.62e-9 & 1.24e-10 \\
		%& 64    & 3.49e+0 & 7.18e-2 & 3.28e-3 & 1.21e-4 & 4.69e-6 & 2.18e-7 & 1.83e-9 & 1.08e-9 \\
		%& 80    & 3.69e+0 & 7.53e-2 & 3.47e-3 & 1.29e-4 & 5.05e-6 & 2.26e-7 & 1.95e-9 & 5.48e-10 \\
		\midrule
		\multicolumn{1}{l}{RNN-S} & 8     & $\|e\|_{L^2}$ & 9.25e-2 & 7.02e-3 & 4.37e-3 & 3.77e-3 & 2.89e-3 & 2.84e-3 & 4.05e-3 & 2.71e-3 \\
		& 12    & $\|e\|_{L^2}$ & 8.07e-2 & 6.80e-4 & 4.51e-5 & 1.32e-4 & 3.07e-5 & 2.36e-5 & 2.79e-5 & 2.35e-5 \\
		& 16    & $\|e\|_{L^2}$ & 8.58e-2 & 5.73e-4 & 1.61e-5 & 8.15e-7 & 5.36e-7 & 3.49e-7 & 1.51e-7 & 2.34e-7 \\
		& 24    & $\|e\|_{L^2}$ & 9.02e-2 & 5.07e-4 & 1.22e-5 & 3.36e-7 & 1.87e-8 & 2.06e-9 & 1.42e-10 & 3.48e-11 \\
		& 32    & $\|e\|_{L^2}$ & 9.27e-2 & 5.00e-4 & 1.13e-5 & 3.01e-7 & 1.26e-8 & 5.67e-10 & 1.20e-11 & 2.69e-12 \\
		& 48    & $\|e\|_{L^2}$ & 9.85e-2 & 5.12e-4 & 1.08e-5 & 2.87e-7 & 1.24e-8 & 4.77e-10 & 5.73e-12 & 8.27e-13 \\
		%& 64    & 1.05e-1 & 5.21e-4 & 1.05e-5 & 2.87e-7 & 1.27e-8 & 4.93e-10 & 7.19e-12 & 2.08e-11 \\
		%& 80    & 1.14e-1 & 5.24e-4 & 1.05e-5 & 2.91e-7 & 1.31e-8 & 5.02e-10 & 2.22e-11 & 8.69e-12 \\
		\midrule
		\multicolumn{1}{l}{RNN-BP} & 8     & $\|e\|_{L^2}$ & 1.15e-2 & 1.01e-2 & 4.40e-3 & 3.90e-3 & 5.34e-3 & 6.46e-3 & 4.21e-3 & 4.61e-3 \\
		& 12    & $\|e\|_{L^2}$ & 7.21e-3 & 1.77e-4 & 4.44e-5 & 1.05e-4 & 1.20e-4 & 5.07e-5 & 1.26e-4 & 5.76e-5 \\
		& 16    & $\|e\|_{L^2}$ & 6.87e-3 & 5.64e-5 & 2.17e-6 & 6.42e-7 & 1.30e-6 & 3.11e-7 & 1.66e-7 & 5.13e-7 \\
		& 24    & $\|e\|_{L^2}$ & 6.43e-3 & 4.06e-5 & 4.01e-7 & 1.38e-8 & 2.18e-9 & 2.51e-10 & 2.45e-11 & 2.12e-11 \\
		& 32    & $\|e\|_{L^2}$ & 6.18e-3 & 3.85e-5 & 3.80e-7 & 8.44e-9 & 4.35e-10 & 4.56e-11 & 9.71e-13 & 3.72e-13 \\
		& 48    & $\|e\|_{L^2}$ & 5.91e-3 & 3.57e-5 & 3.82e-7 & 8.42e-9 & 3.63e-10 & 1.69e-11 & 8.15e-14 & 2.55e-14 \\
		%& 64    & 5.76e-3 & 3.40e-5 & 3.75e-7 & 8.35e-9 & 3.63e-10 & 1.73e-11 & 1.30e-13 & 4.31e-13 \\
		%& 80    & 5.68e-3 & 3.29e-5 & 3.69e-7 & 8.24e-9 & 3.60e-10 & 1.75e-11 & 2.00e-13 & 1.34e-13 \\
		\bottomrule&               
	\end{tabular}%
	\label{exp1-Poisson-Rm1}%
\end{table}%
From Tables \ref{exp1-Poisson-km}  and \ref{exp1-Poisson-Rm1}, we conclude the following observations.  The errors of  all three  methods  show  a trend of first decreasing and then stabilizing under both two initialization methods.
The errors of the  RNN-scaling method  are   smaller than the RNN method.  Especially under uniform random initialization, the performance is more stable,  and the errors gradually decrease as  $N$ and $M$ increase.
The RNN-BP method, under both initialization methods,  achieves smallest among the three methods  and these errors   decrease  as $N$ and $M$ increase.
\begin{figure}[!htbp]
	\centering 
	\begin{subfigure}[t]{0.32\linewidth}
		\centering
		\includegraphics[width=1.9in]{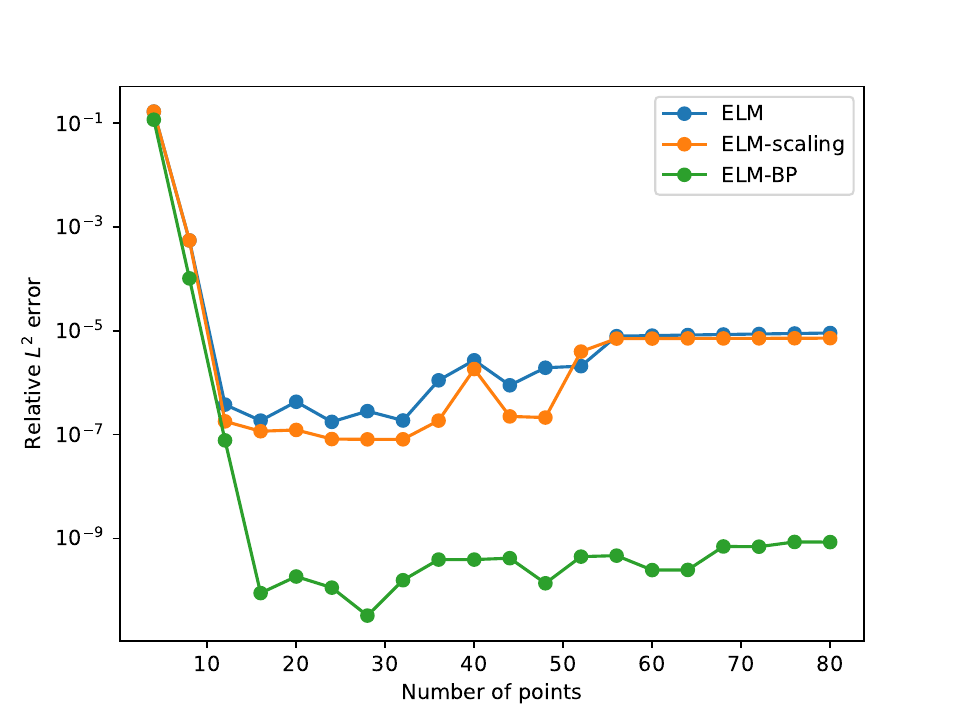}
		\caption{}
	\end{subfigure}
	\begin{subfigure}[t]{0.32\linewidth}
		\centering
		\includegraphics[width=1.9in]{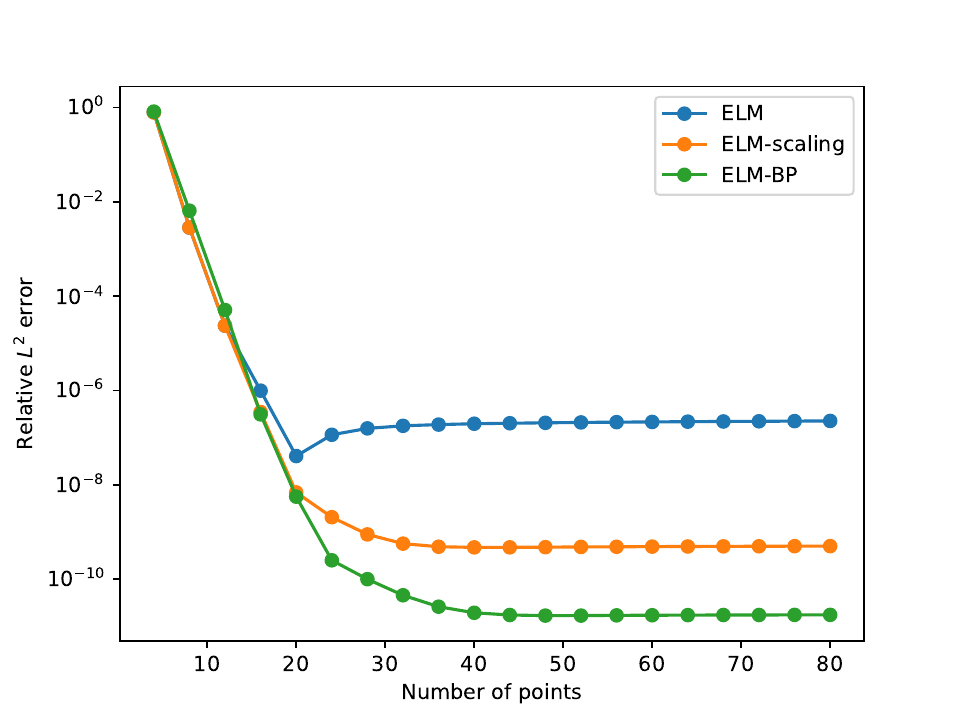}
		\caption{}
	\end{subfigure}
	\begin{subfigure}[t]{0.32\linewidth}
			\centering 
		\includegraphics[width=1.9in]{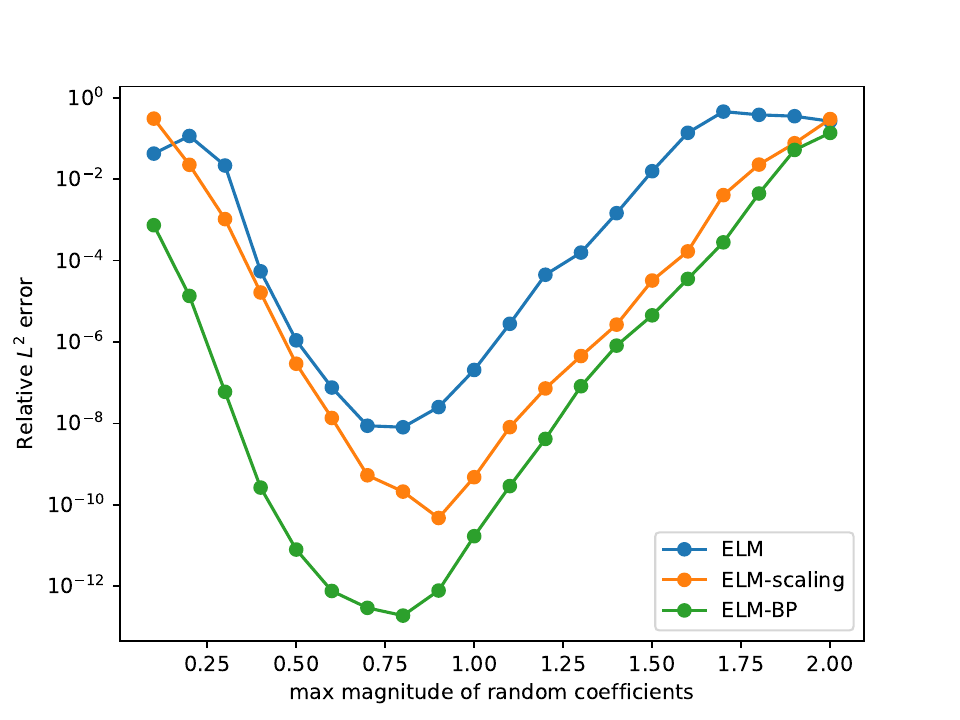}
		\caption{}  \label{exp1-Poisson-compare-Rm}
			\end{subfigure}
	\caption{Comparison of relative $L^2$ errors of three methods for Example 1.1: (a) Varying numbers of points with the default initialization. (b) Varying numbers of points with uniform random initialization ($R_m = 1$). (c) Varying max magnitude of random coefficients.}  \label{exp1-Poisson-compare-N}
\end{figure}

  Figs. \ref{exp1-Poisson-compare-N}(a)-(b) illustrate that the RNN-Scaling and RNN-BP methods tend to produce smaller $L^2$ errors than the RNN method, especially under uniform random initialization with $R_m=1$. This indicates that these two methods are more robust to initialization conditions and may perform better in practice. Under uniform random initialization, the performance of the  three methods is generally better than that under  the default initialization condition.

We compare the  three methods  across varying values of   $R_m$, where we  fix  $M=300$ and $N=48$, and vary  $R_m$ from 0.1 to 2. From  Fig. \ref{exp1-Poisson-compare-N}(c), it can be seen that as $R_m$ varies, the three methods exhibit a similar trend. Specifically, when $R_m$ is around 0.3 to 1.5, the errors of the three methods are relatively small; when $R_m>1.5$,  the errors increase rapidly. The RNN-BP method produces the smallest error among the three methods.

\subsubsection{Example 1.2}	
The second numerical example considers the exact solution on  $\Omega  = (0,2)^2$:
\footnotesize
\begin{equation*}
	u(x, y)=-\left[2 \cos \left(\frac{3}{2} \pi x+\frac{2 \pi}{5}\right)+\frac{3}{2} \cos \left(3 \pi x-\frac{\pi}{5}\right)\right]\left[2 \cos \left(\frac{3}{2} \pi y+\frac{2 \pi}{5}\right)+\frac{3}{2} \cos \left(3 \pi y-\frac{\pi}{5}\right)\right].
\end{equation*}
\normalsize

We compare  the three methods varying max  magnitude of random coefficient   with the fixed network  [2, 100, 500, 1]  and  $N=96$.  The result  is  presented  in Fig. \ref{exp2-Poisson}(a).  It is  observed that the optimal value of $R_m \approx 1.2$. The RNN-BP method achieves  the smallest error among the three methods, with the smallest error reaching 4.75$\times 10^{-10}$. We  compare the methods with varying values of  $M$, with fixed $R_m=1$  and $N=64$.  Fig. \ref{exp2-Poisson}(b) illustrates  that the errors of all three methods decrease  as $M$ increases, and the RNN-BP method remains the one with the smallest error. 

\begin{figure}[!htbp]
	\centering 
	\begin{subfigure}[t]{0.43\linewidth}
		\centering
		\includegraphics[width=2.5in]{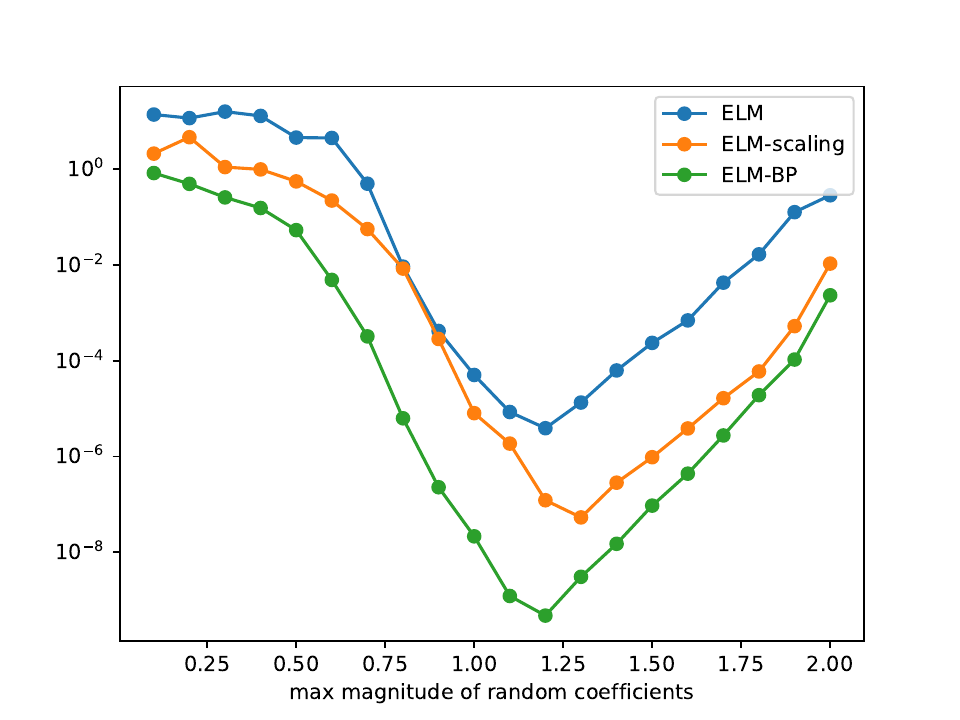}
		\caption{Varying max magnitude of random coefficient  with uniform random initialization.}
	\end{subfigure}
	%		\hfill
	\hspace{1.5cm} % 使用hfill来保证两个子图之间的空间均匀分布
	\begin{subfigure}[t]{0.43\linewidth}
		\centering
		\includegraphics[width=2.5in]{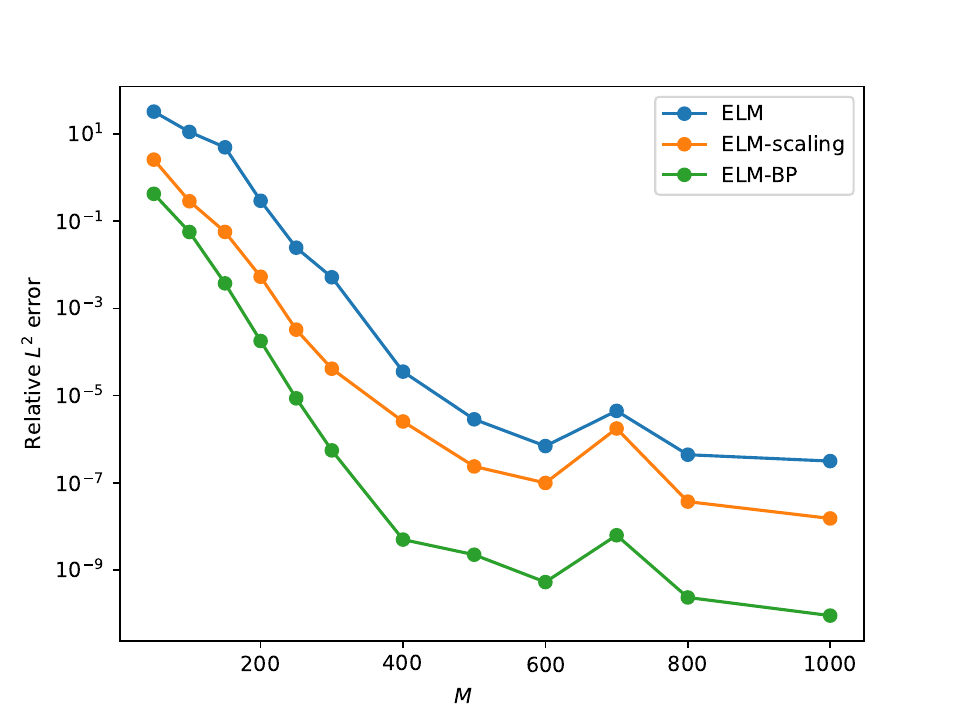}
		\caption{Varying    $M$  with uniform random initialization ($R_m=1$).}
	\end{subfigure}
	\caption{Comparison of relative $L^2$ errors of three methods for Example 1.2.}  \label{exp2-Poisson}
\end{figure}

\subsubsection{Example 1.3}
This example aims to  demonstrate  that the RNN-BP method is exact for the  solution  of the form \eqref{form-Poisson}. We consider the exact solution on $\Omega = (0,1)\times (0,1)$:
\begin{equation*}
	u(x,y) = x^{10} + y^{10} + x^k  \sin y + y^k\cos x,
\end{equation*}
where $k$ is an integer.

We compare the three methods with different  numbers of collocation points for $k=1$ and $k=2$, using a  fixed network [2,  100,  300,  1] and random initialization ($R_m=1$).   The results are presented   in Table \ref{exp3_Poisson}.  For the RNN-BP method, it can be observed  that the  relative $L^2$ errors achieve  machine precision when $k=1$.  This verifies that the RNN-BP method is exact for the  solution  of the form \eqref{form-Poisson}.  The errors decrease rapidly as the numbers
of collocation points increase  when $k = 2$,  with the smallest error about $10^{-14}$.  The relative $L^2$ errors of the RNN-Scaling method are at most 3 orders of magnitude smaller than those of the RNN method.

\begin{table}[htbp]
	\centering
	\scriptsize
	\caption{Comparison of three  methods  with different $N$ for Example 1.3.}
	\begin{tabular}{clccccccc}
		\toprule
		$k$   & $N$   & 4     & 12    & 20    & 28    & 36    & 44    & 52 \\
		\midrule
		1     & RNN   & 8.17e-1 & 1.41e-4 & 2.44e-7 & 6.32e-7 & 8.23e-7 & 9.00e-7 & 9.43e-7 \\
		& RNN-S & 8.17e-1 & 1.41e-4 & 3.55e-8 & 5.47e-9 & 3.67e-9 & 3.56e-9 & 3.60e-9 \\
		& RNN-BP & 2.35e-16 & 2.60e-16 & 2.57e-16 & 2.19e-16 & 2.19e-16 & 2.19e-16 & 2.19e-16 \\
		\midrule
		2     & RNN   & 1.05e+0 & 1.70e-4 & 2.92e-7 & 7.58e-7 & 9.87e-7 & 1.08e-6 & 1.13e-6 \\
		& RNN-S & 1.05e+0 & 1.70e-4 & 4.25e-8 & 6.56e-9 & 4.39e-9 & 4.26e-9 & 4.30e-9 \\
		& RNN-BP & 2.64e-2 & 1.15e-6 & 1.39e-11 & 8.14e-13 & 1.62e-13 & 9.64e-14 & 9.06e-14 \\
		\bottomrule
	\end{tabular}%
	\label{exp3_Poisson} 
\end{table}%

\subsubsection{Example 1.4}
This example   considers the exact solution  
\begin{equation*}
	u(x,y) = x^4 + y^4
\end{equation*}
on the unit circle centered at   $(0,0)$. 

We fix $N=64$  
and vary   $M$ to compare the   RNN method  and the RNN-Scaling method, where  the network architecture is [2, 100, 100, $M$, 1] and  the default initialization method is employed. Fig. \ref{exp4_Poisson}(a) shows an example of the  uniform  collocation points with
$N=32$.  Fig. \ref{exp4_Poisson}(b)  presents the comparison results, 
it can be seen that the RNN-Scaling method produces the smallest error (about $10^{-9}$), and typically results in a reduction of 2 orders of magnitude in error compared to the RNN method.
Figs. \ref{exp4_Poisson}(c)-(d) illustrate  the absolute errors of  both methods when 
$N=64$ and $M=200$, and it can be observed that the error of the RNN method is concentrated on the boundary, while the error of the RNN-Scaling method is concentrated in the interior.

\begin{figure}[!htbp]
	\centering 
	\begin{subfigure}[t]{0.2\linewidth}
		\centering
		\includegraphics[width=\linewidth]{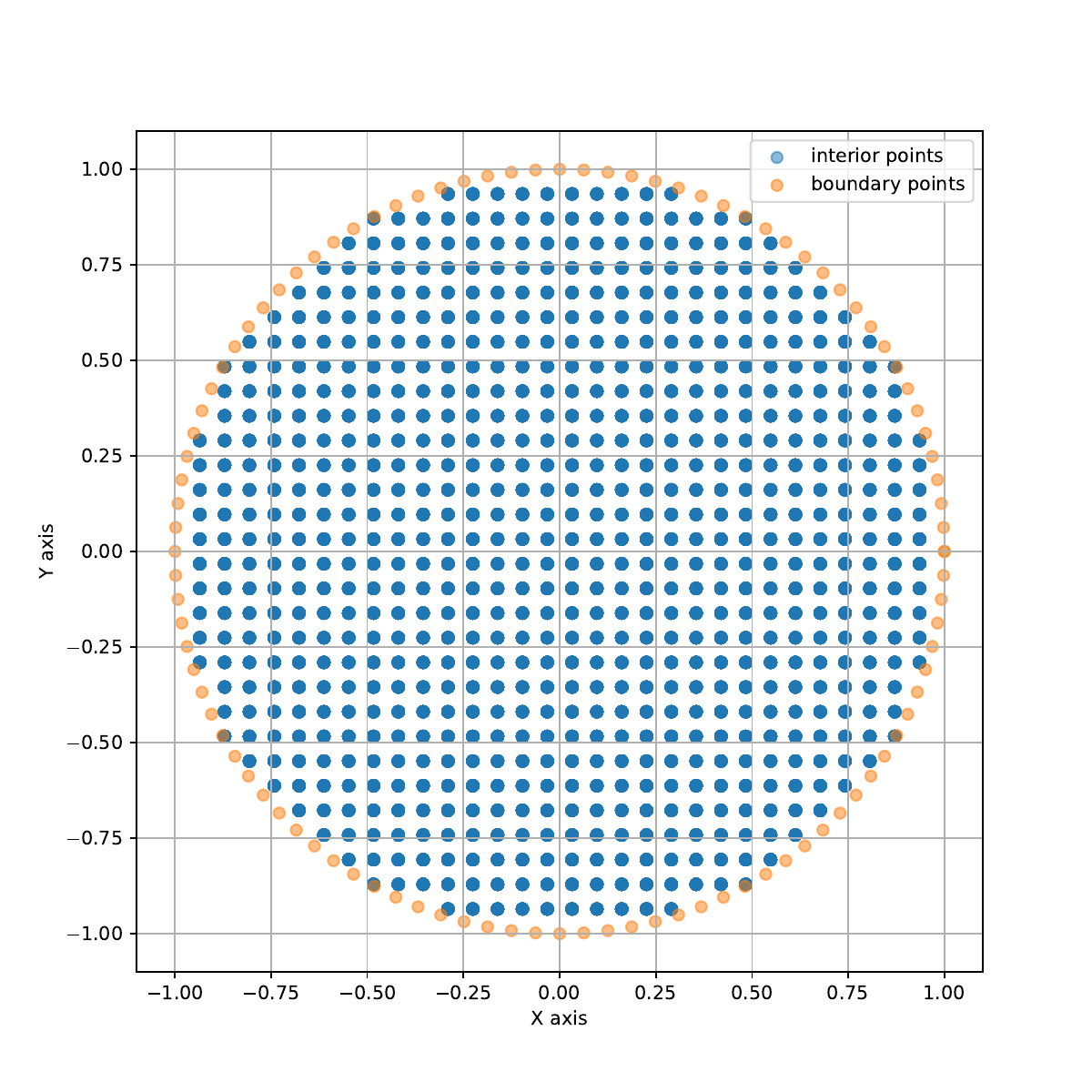}
		\caption{The uniform collocation points on the unit circle.}  
	\end{subfigure}
	\hfill
	\begin{subfigure}[t]{0.245\linewidth}
		\centering
		\includegraphics[width=\linewidth]{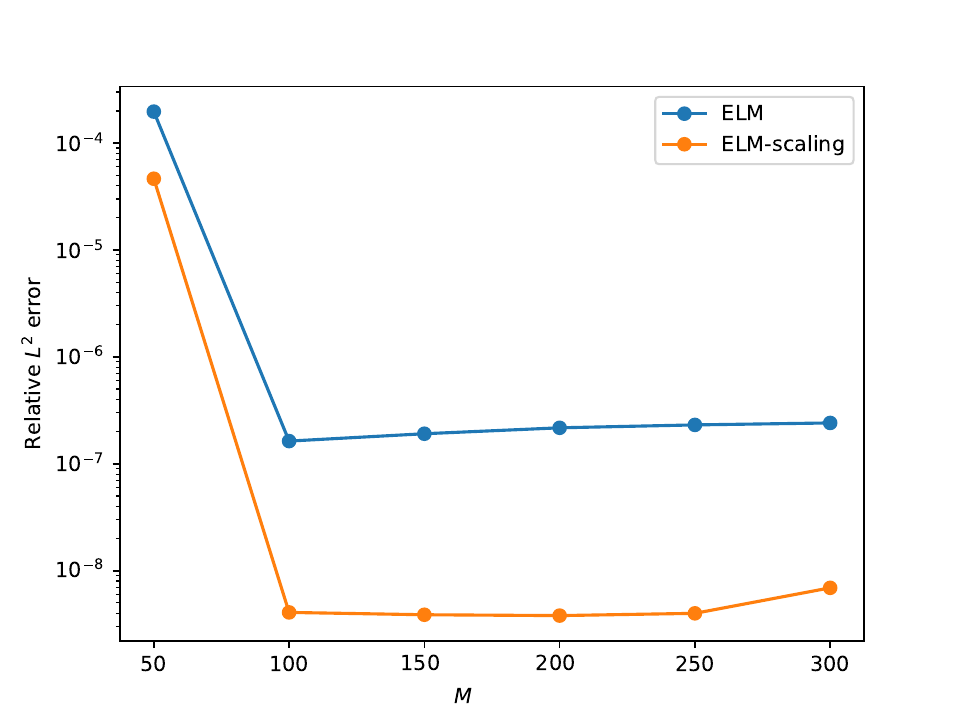}
		\caption{ Varying  $M$ with the default initialization.}  
	\end{subfigure}
	\hfill
	\begin{subfigure}[t]{0.25\linewidth}
		\centering
		\includegraphics[width=\linewidth]{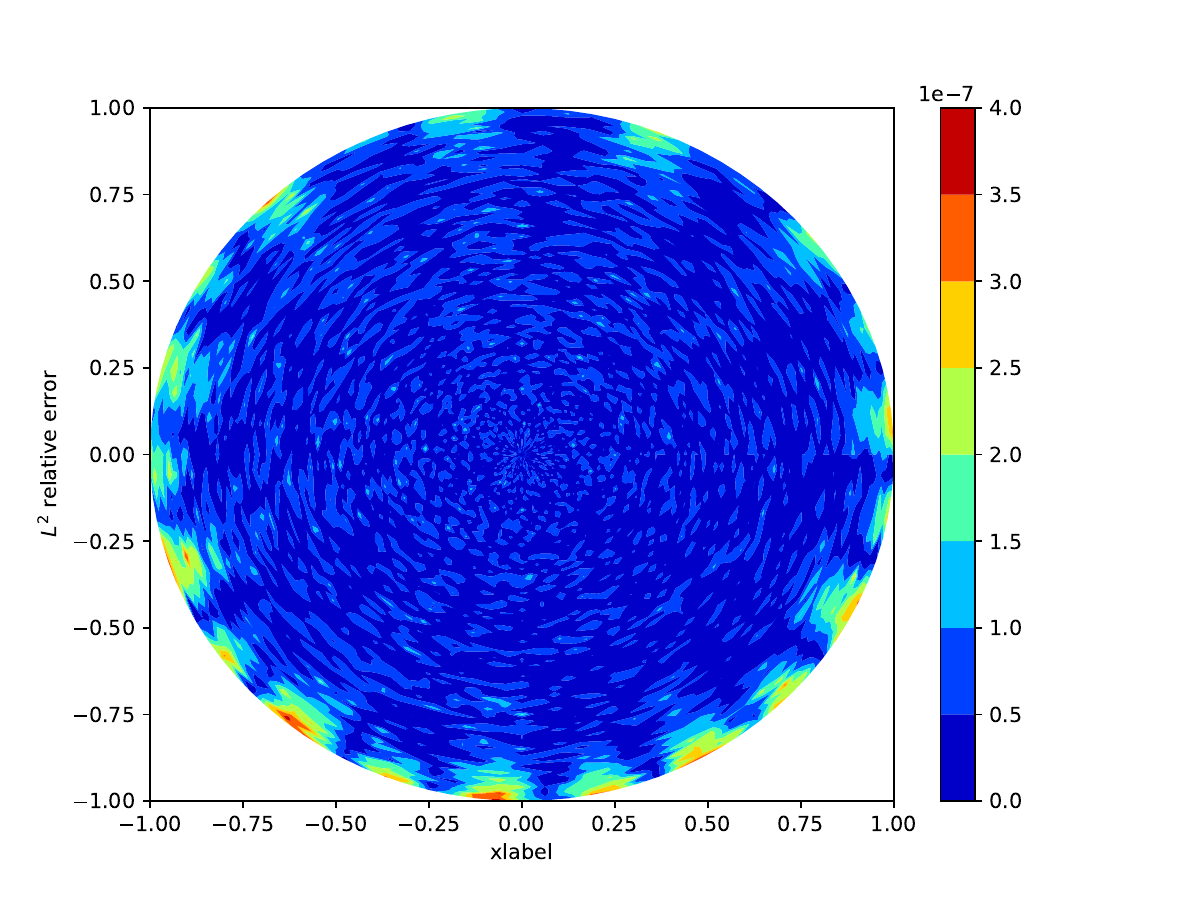}
		\caption{The absolute  error of the  RNN method.}  
	\end{subfigure}
	\hfill
	\begin{subfigure}[t]{0.25\linewidth}
		\centering
		\includegraphics[width=\linewidth]{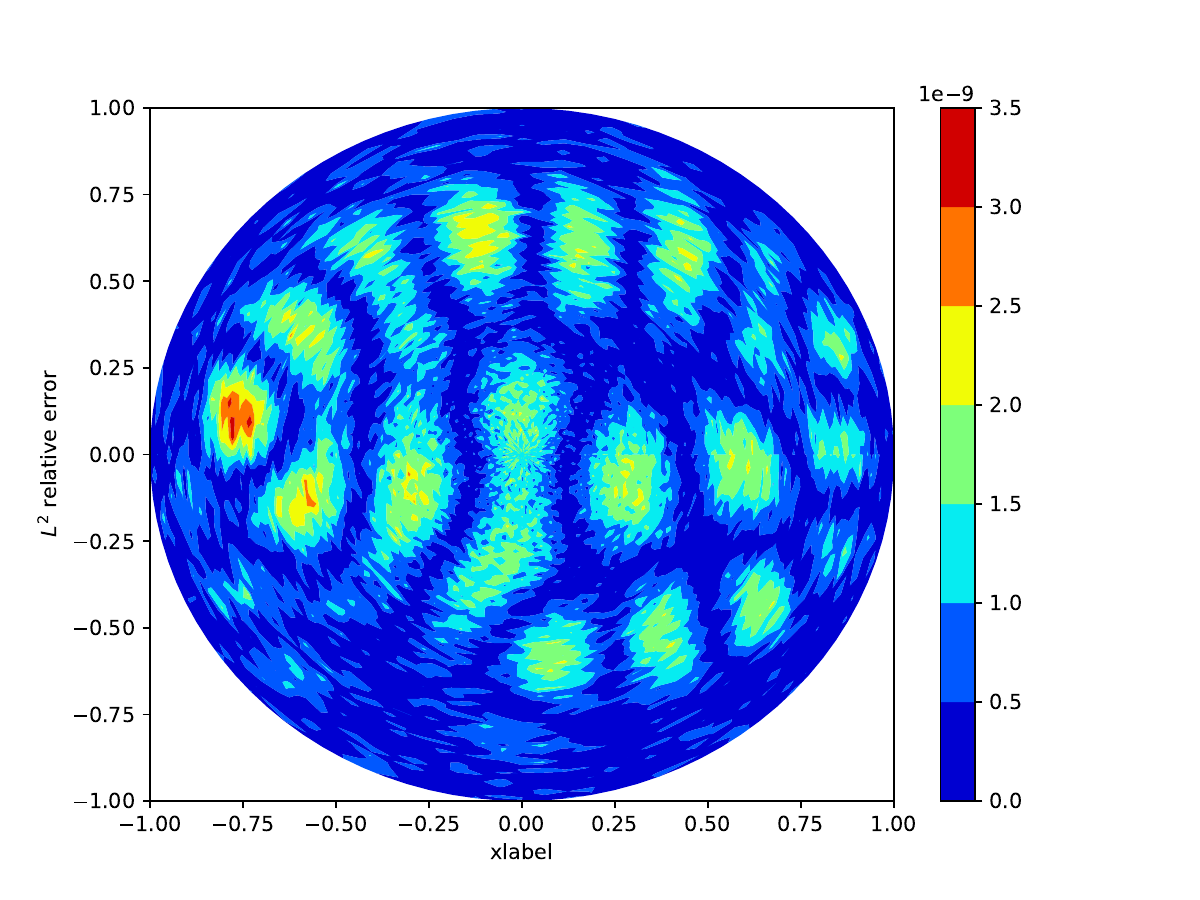}
		\caption{The absolute  error of the  RNN-Scaling method.}  
	\end{subfigure}
	\caption{The numerical results for Example 1.4.}\label{exp4_Poisson}
\end{figure}

From Examples 1.1-1.4, we offer the following summary for solving the Poisson equation.  The RNN-BP method consistently achieves the smallest errors among the three methods. 
 More specifically, 
 when $M$ and $N$ are relatively large, the errors of the  RNN-BP method   are, on average,  4 orders of magnitude smaller than those of the RNN method in Example 1.1,  and 3 orders of magnitude smaller  than those of the RNN method in Example 1.2.
Moreover, under both initialization methods, the errors of  the RNN-BP method are significantly smaller than those of  the other two methods, indicating that the RNN-BP method is more robust in practice.

On both the unit circular and square domains, the RNN-Scaling method is more accurate than the  RNN method.
 Specifically, on the unit circular domain,  the relative $L^2$ errors of the RNN-Scaling method are,  on average, 2 orders of magnitude smaller than those of the RNN method.  
On the square domain, under the default initialization method,  the errors of the RNN-Scaling method  are,  on average 2 orders of magnitude smaller than the RNN method when $M$ is small;  and the  difference between the two methods  is not significant when $M$ is large. 
Under the uniform random initialization, the errors are,  on average, 2 orders of magnitude smaller than those of the RNN method.  

\subsection{Biharmonic equation}
Next, we test our methods for the biharmonic equation.
\subsubsection{Example 2.1}
In the first example, we consider the  exact solution on $\Omega = (0,1)\times (0,1)$:
\begin{equation*}
	u(x,y) = \sin\pi x \sin\pi y.
\end{equation*}

  We employ a neural network    containing 2 hidden layers with 100 and $M$ neurons, respectively. 
We compare the relative $L^2$ errors of the  three  methods with different numbers  of   collocation points and  $M$.  Tables \ref{exp1-km}  and \ref{exp1-Rm1}  present the comparison results with  the  default   and uniform random initialization $R_m $=1, respectively.  
Tables  \ref{exp1-km}  and    \ref{exp1-Rm1}  demonstrate  that the errors of  all   three methods significantly decrease as 
$M$ and $N$ increase.   
  The RNN-BP method exhibits the best performance, followed by the RNN-Scaling method, and then the   RNN method.  Specifically, under  the default  initialization,  the RNN-Scaling method results in a reduction of 1 to 4 orders of magnitude in error,  and the RNN-BP  method results in a reduction of 6 to 9 orders of magnitude in error. The RNN-BP method achieves machine precision when $M\geq 150$ and $N\geq 16$. 
Under random initialization, the RNN-Scaling method achieves a reduction of up to 5 orders of magnitude in error, while the RNN-BP method results in a reduction of up to 7 orders of magnitude.
These  indicate that the RNN-BP and RNN-Scaling  methods have  a clear advantage over the  RNN method.
 Figs.  \ref{exp1-error}(a)-(c) display the  error distributions for the three methods with $M=300$ and $N=32$.  It can be seen that the error of the  RNN   is predominantly  concentrated around the boundary, while the RNN-Scaling method has effectively reduced the errors around  the boundary. Furthermore, the RNN-BP method is exact on the boundary, achieving  the most accurate results.

\begin{table}[htbp]
	\centering
	\scriptsize
	\caption{Comparison of three  methods    with the default  initialization    for Example 2.1.}
	\begin{tabular}{ccccccccc}
		\toprule
		\multicolumn{1}{l}{method} & $N$   & $M$   & 50    & 100   & 150   & 200   & 250   & 300 \\
		\midrule
		\multicolumn{1}{l}{RNN} & 8     & $\|e\|_{L^2}$ & 4.73e-2 & 1.65e-5 & 1.03e-8 & 4.13e-9 & 3.65e-9 & 4.40e-9 \\
		& 12    & $\|e\|_{L^2}$ & 5.48e-2 & 1.50e-4 & 3.62e-8 & 1.69e-10 & 5.20e-11 & 1.54e-10 \\
		& 16    & $\|e\|_{L^2}$ & 5.87e-2 & 1.78e-4 & 6.17e-8 & 1.20e-9 & 5.79e-10 & 1.20e-10 \\
		& 24    & $\|e\|_{L^2}$ & 6.48e-2 & 1.93e-4 & 7.35e-8 & 1.75e-9 & 1.92e-9 & 3.65e-10 \\
		& 32    & $\|e\|_{L^2}$ & 7.03e-2 & 2.01e-4 & 8.83e-8 & 1.98e-9 & 8.14e-10 & 1.96e-9 \\
		\midrule
		\multicolumn{1}{l}{RNN-S } & 8     & $\|e\|_{L^2}$ & 1.78e-4 & 1.26e-7 & 8.94e-9 & 4.00e-9 & 3.45e-9 & 4.29e-9 \\
		& 12    & $\|e\|_{L^2}$ & 7.22e-4 & 5.68e-8 & 3.19e-11 & 1.64e-11 & 7.06e-12 & 7.13e-12 \\
		& 16    & $\|e\|_{L^2}$ & 9.56e-4 & 3.78e-8 & 2.97e-11 & 5.76e-11 & 1.57e-11 & 6.29e-12 \\
		& 24    & $\|e\|_{L^2}$ & 7.57e-4 & 5.05e-8 & 5.16e-10 & 2.80e-10 & 1.29e-10 & 8.97e-11 \\
		& 32    & $\|e\|_{L^2}$ & 8.52e-4 & 6.57e-8 & 1.25e-9 & 1.39e-9 & 6.19e-10 & 8.57e-10 \\
		\midrule
		\multicolumn{1}{l}{RNN-BP} & 8     & $\|e\|_{L^2}$ & 3.37e-9 & 5.79e-10 & 2.80e-10 & 1.54e-10 & 1.05e-10 & 1.85e-10 \\
		& 12    & $\|e\|_{L^2}$ & 1.49e-8 & 8.37e-13 & 2.49e-14 & 8.27e-14 & 5.12e-14 & 3.00e-14 \\
		& 16    & $\|e\|_{L^2}$ & 1.20e-8 & 4.65e-13 & 9.27e-16 & 5.32e-16 & 2.67e-16 & 3.35e-16 \\
		& 24    & $\|e\|_{L^2}$ & 9.33e-9 & 5.44e-13 & 7.17e-16 & 1.15e-15 & 1.28e-16 & 2.37e-16 \\
		& 32    & $\|e\|_{L^2}$ & 8.18e-9 & 5.09e-13 & 6.73e-16 & 4.13e-16 & 1.87e-16 & 4.73e-16 \\
		\bottomrule
	\end{tabular}%
	\label{exp1-km} 
\end{table}%

\begin{table}[htbp]
	\centering
	\scriptsize
	\caption{Comparison of three   methods    with  uniform random  initialization ($R_m =1$)  for Example 2.1.}
	\begin{tabular}{ccccccccc}
		\toprule
		\multicolumn{1}{l}{method} & $N$   & $M$   & 50    & 100   & 150   & 200   & 250   & 300 \\
		\midrule
		\multicolumn{1}{l}{RNN} & 8     & $\|e\|_{L^2}$ & 4.96e+0 & 4.05e-3 & 6.38e-4 & 6.74e-4 & 3.30e-4 & 1.05e-3 \\
		& 12    & $\|e\|_{L^2}$ & 5.09e+0 & 4.27e-1 & 4.24e-2 & 4.19e-5 & 3.04e-7 & 4.32e-7 \\
		& 16    & $\|e\|_{L^2}$ & 5.05e+0 & 5.62e-1 & 4.28e-2 & 5.66e-3 & 1.28e-4 & 2.03e-6 \\
		& 24    & $\|e\|_{L^2}$ & 5.00e+0 & 6.33e-1 & 4.00e-2 & 7.98e-3 & 3.18e-4 & 3.30e-5 \\
		& 32    & $\|e\|_{L^2}$ & 4.98e+0 & 6.58e-1 & 4.08e-2 & 9.14e-3 & 3.94e-4 & 4.60e-5 \\
		\midrule
		\multicolumn{1}{l}{RNN-S } & 8     & $\|e\|_{L^2}$ & 8.15e-2 & 9.32e-4 & 6.38e-4 & 6.74e-4 & 3.30e-4 & 1.05e-3 \\
		& 12    & $\|e\|_{L^2}$ & 3.96e-2 & 3.00e-3 & 4.53e-5 & 2.79e-6 & 3.04e-7 & 4.32e-7 \\
		& 16    & $\|e\|_{L^2}$ & 4.36e-2 & 3.72e-3 & 3.21e-5 & 1.20e-6 & 2.97e-8 & 3.45e-9 \\
		& 24    & $\|e\|_{L^2}$ & 2.73e-2 & 3.50e-3 & 3.23e-5 & 7.27e-7 & 1.21e-8 & 7.52e-10 \\
		& 32    & $\|e\|_{L^2}$ & 1.01e-1 & 2.97e-3 & 5.86e-5 & 9.14e-7 & 2.13e-8 & 7.95e-10 \\
		\midrule
		\multicolumn{1}{l}{RNN-BP} & 8     & $\|e\|_{L^2}$ & 2.01e-4 & 2.35e-4 & 6.07e-5 & 3.25e-4 & 4.15e-4 & 3.09e-4 \\
		& 12    & $\|e\|_{L^2}$ & 6.49e-4 & 1.93e-6 & 1.03e-6 & 9.59e-7 & 2.86e-7 & 1.25e-6 \\
		& 16    & $\|e\|_{L^2}$ & 6.55e-4 & 1.17e-6 & 4.42e-8 & 8.24e-9 & 3.85e-9 & 2.34e-9 \\
		& 24    & $\|e\|_{L^2}$ & 6.32e-4 & 5.04e-7 & 6.13e-8 & 1.26e-9 & 1.82e-11 & 1.02e-11 \\
		& 32    & $\|e\|_{L^2}$ & 6.12e-4 & 3.85e-7 & 9.31e-8 & 1.54e-9 & 6.55e-11 & 1.11e-11 \\
		\bottomrule
	\end{tabular}%
	\label{exp1-Rm1}%
\end{table}%

\begin{figure}[!htbp]
	\centering 
	\begin{subfigure}[t]{0.31\linewidth}
		\centering
		\includegraphics[width=1.8in]{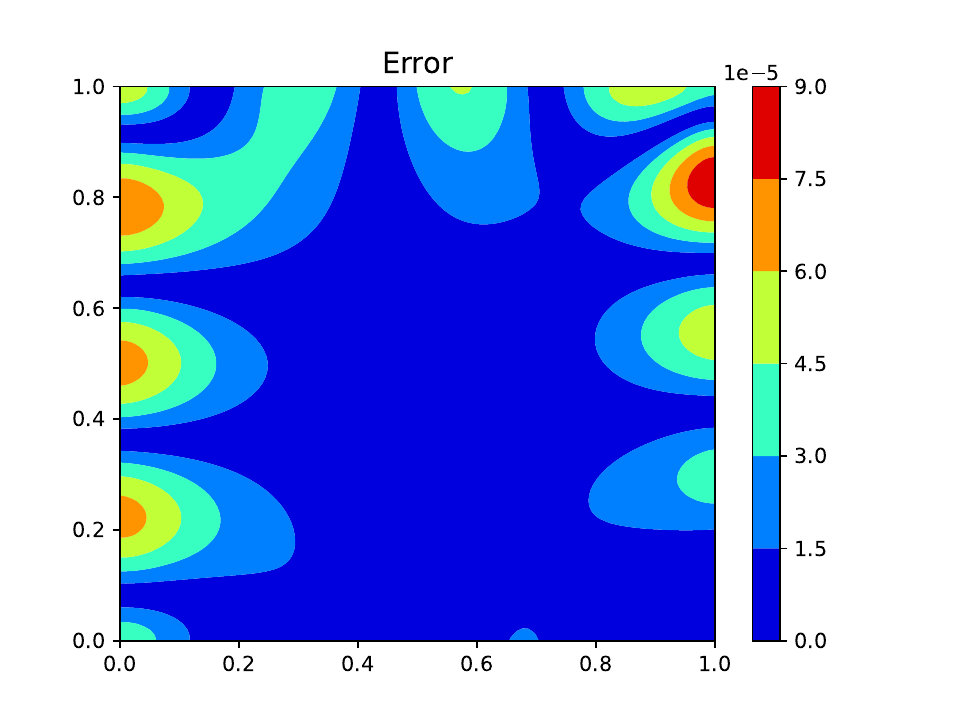}
		\caption{The absolute  error of the  RNN method.} \label{exp1-error-a}
	\end{subfigure}
	\hfill
	\begin{subfigure}[t]{0.31\linewidth}
		\centering
		\includegraphics[width=1.8in]{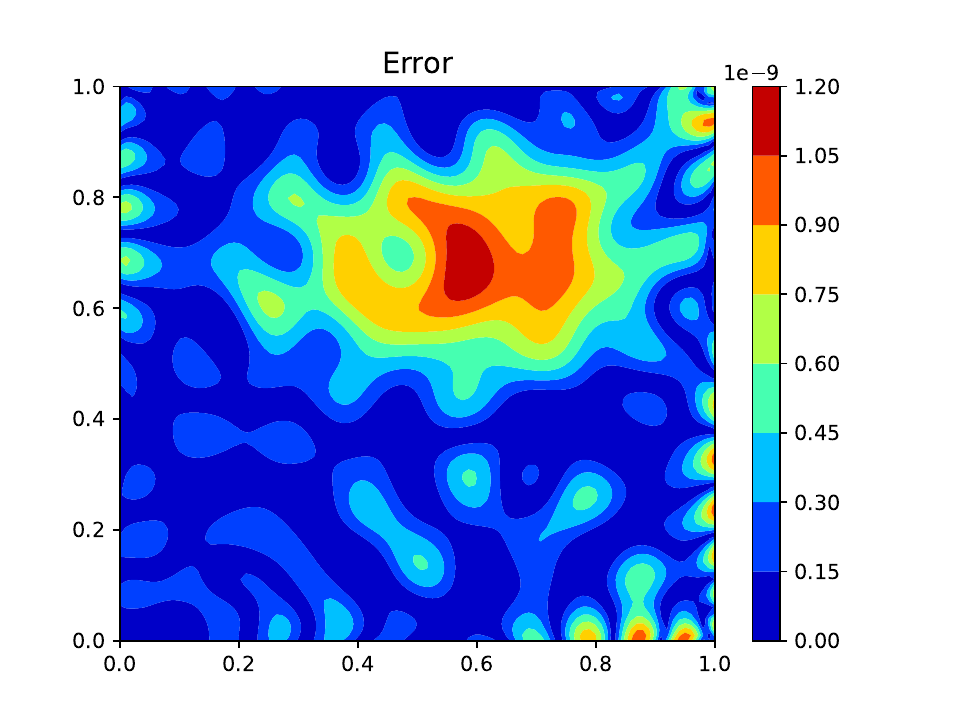}
		\caption{The absolute  error of the  RNN-Scaling method.} 
	\end{subfigure}
	\hfill
	\begin{subfigure}[t]{0.31\linewidth}
		\centering
		\includegraphics[width=1.8in]{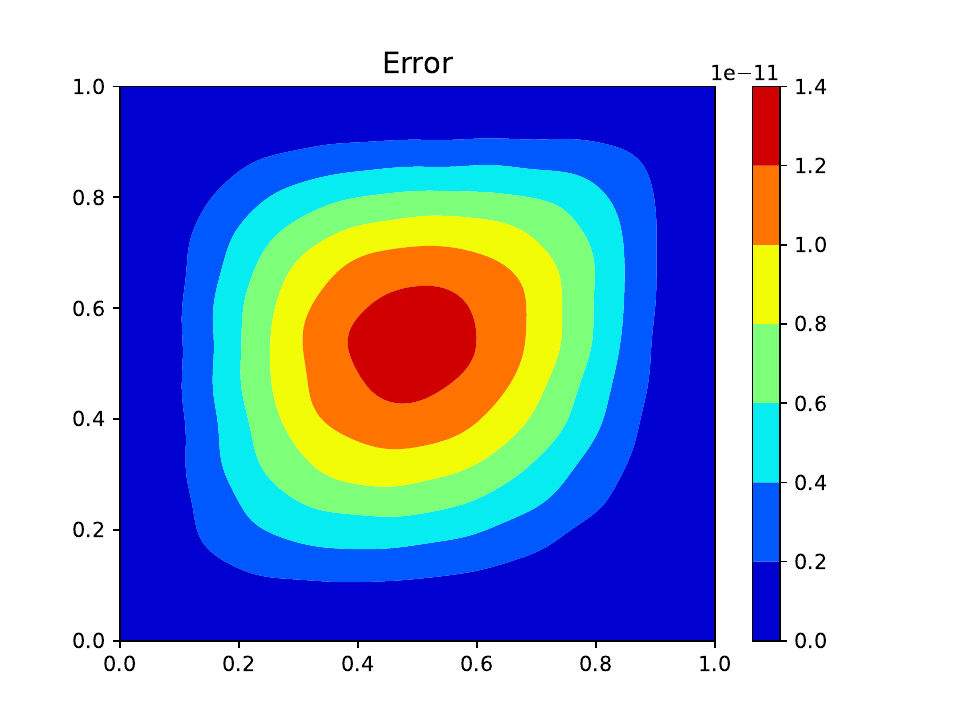}
		\caption{The absolute  error of the  RNN-BP method.} \label{exp1-error-c}
	\end{subfigure}
	\caption{The absolute  errors of three  methods  with uniform random  initialization    for Example 2.1. (The neural network  architecture is [2, 100, 300, 1] and $N=32$.)} \label{exp1-error}
\end{figure}

We  set the network architecture as [2, 100, 300, 1] and compare the performance of the three methods with different numbers of collocation points. Figs. \ref{exp1-compare-N-M}(a)-(b)  provide  the comparison results with the default   and random  initialization ($R_m=1$), respectively. 
It can be seen that regardless of the initialization method, the RNN-BP method consistently achieves  smaller errors than the other two methods. Under  the default initialization, the RNN-Scaling method performs slightly better than the RNN method.  Under  uniform random initialization, the RNN-Scaling method significantly outperforms the RNN method.

We  fix the number  of collocation points and compare the  three methods with different   $M$. Figs. \ref{exp1-compare-N-M}(c)-(d)  illustrate the comparison results with the default initialization and uniform random initialization, respectively.  The RNN-BP method outperforms the other two methods in terms of accuracy under both initialization methods.  Under  the default initialization, the RNN-Scaling method achieves   smaller errors  than the RNN method when $M\leq 150$;  and the errors of both methods become similar when 
$M>150$.
Under  uniform random initialization, the RNN-Scaling method consistently produces  smaller errors  than the RNN method, with a reduction of up to 5 orders of magnitude.

We compare the  three methods  with different  $R_m$, where we  fix  $M=300$ and $N=32$, and vary  $R_m$ from 0.1 to 2.   Fig. \ref{exp1-compare-Rm} shows  that as $R_m$ varies, the three methods exhibit a similar trend. Specifically, when $R_m$ is around 0.2 to 1.1, the errors of the three methods are relatively small. When $R_m>1.1$,  the errors increase rapidly. The RNN-BP method still  produces the smallest error.

\begin{figure}[!htbp]
	\centering 
		\hspace{-12mm}
	\begin{subfigure}[t]{0.40\linewidth}
		\centering
		\includegraphics[width=2.8in]{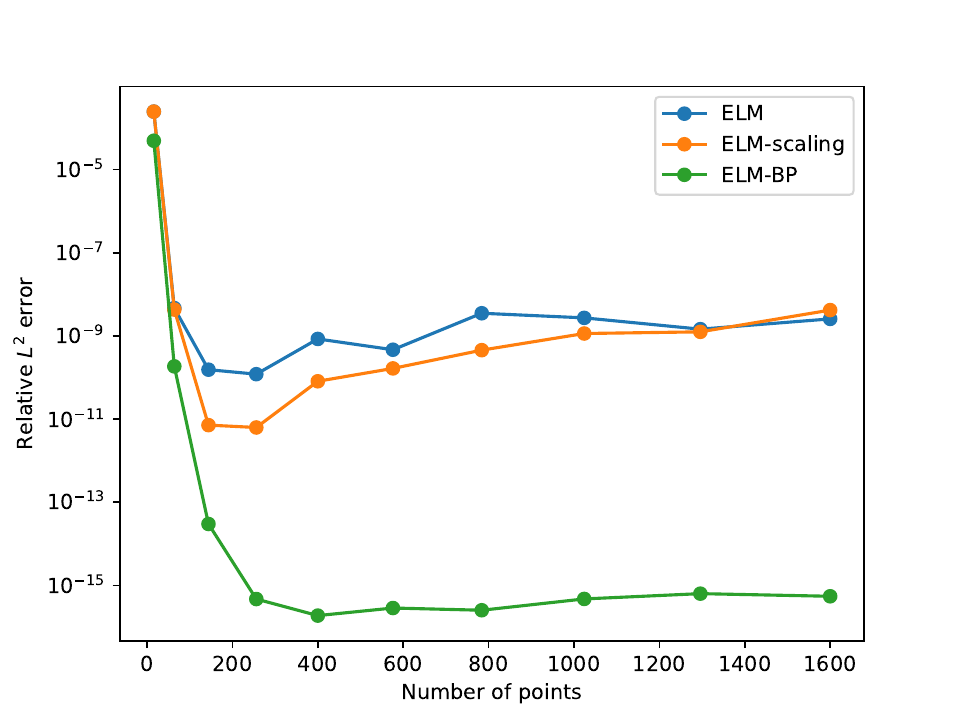}
		\caption{}
	\end{subfigure}
	%\hfill  
	\hspace{8mm}
	\begin{subfigure}[t]{0.40\linewidth}
		\centering
		\includegraphics[width=2.8in]{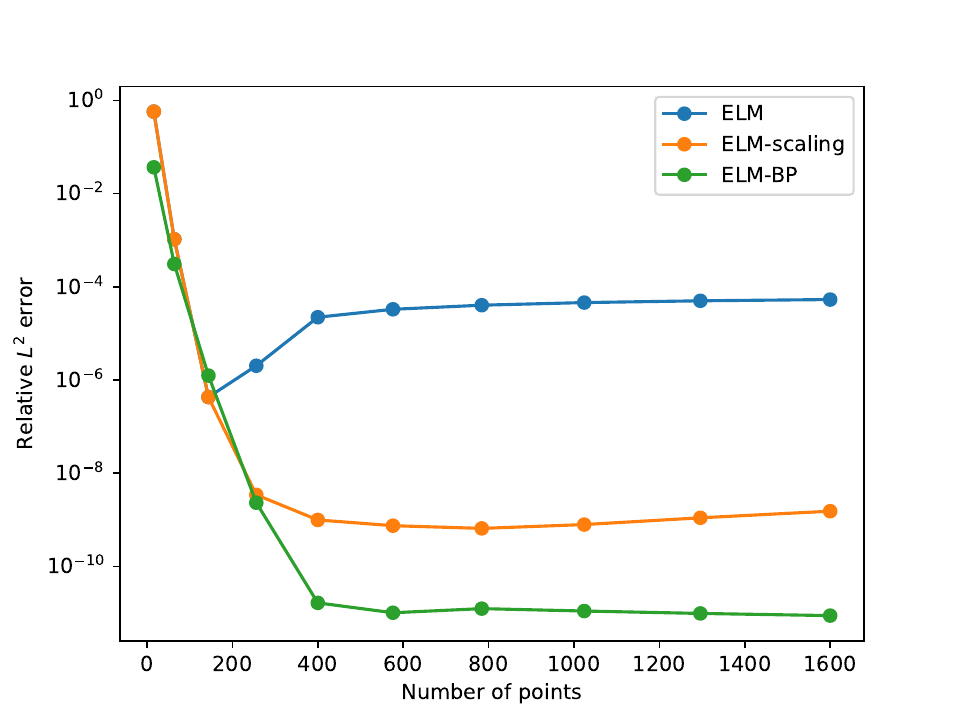}
		\caption{}
	\end{subfigure}
	%\hfill
	\\
		\hspace{-12mm}
	\begin{subfigure}[t]{0.40\linewidth}
		\centering
		\includegraphics[width=2.8in]{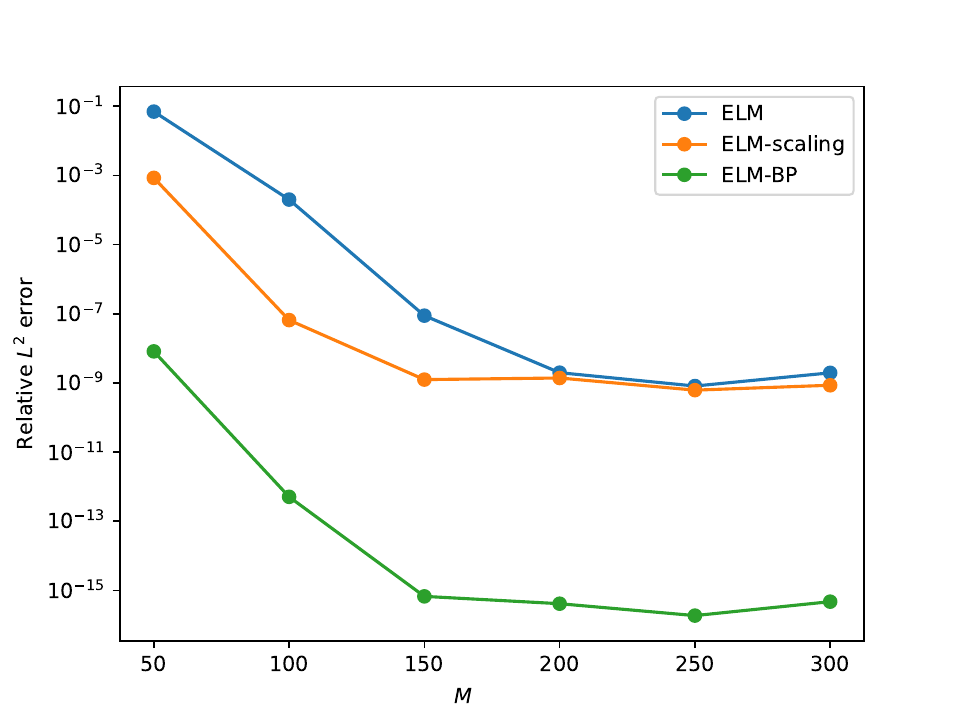}
		\caption{}
	\end{subfigure}
	%\hfill  
	\hspace{8mm}
	\begin{subfigure}[t]{0.4\linewidth}
		\centering
		\includegraphics[width=2.8in]{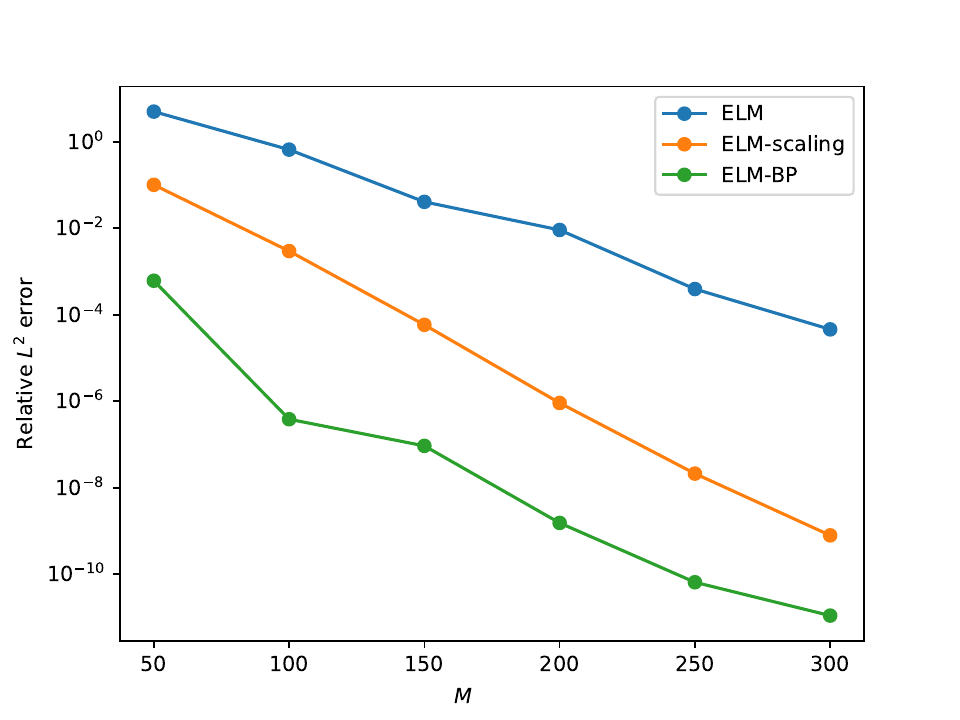}
		\caption{}
	\end{subfigure}
	\caption{Comparison of three methods for Example 2.1: (a) Varying numbers of points with the default initialization. (b) Varying numbers of points with uniform random initialization ($R_m = 1$).  (c) Varying  $M$ with the  default initialization.
		(d) Varying   $M$ with uniform random initialization ($R_m = 1$). }  \label{exp1-compare-N-M}
\end{figure}

\begin{figure}[!htbp]
	\centering 
	\includegraphics[width=3in]{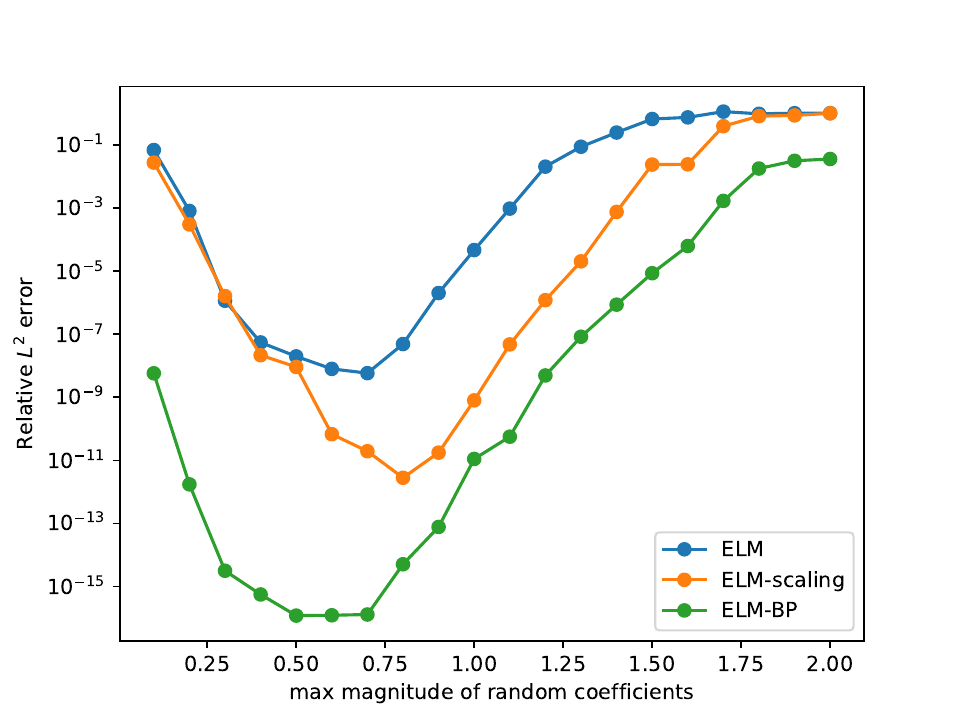}
	\caption{Comparison of relative $L^2$ errors of three methods varying max magnitude of random coefficients for Example 2.1.}\label{exp1-compare-Rm}
\end{figure}

\subsubsection{Example 2.2}
In this example, we consider the exact solution on $\Omega = (0,1)\times (0,1)$:
\begin{equation*}
	u(x,y) = \sin(2\pi x)\sin (2\pi y).
\end{equation*}

The numerical performance of  this example is generally similar to that of Example 2.1, so we will provide a brief analysis  of   the numerical results. Tables \ref{exp2-km} and    \ref{exp2-Rm1} compare the three  methods with different numbers of collocation points and  $M$, and initialization methods. It can be seen that the error with the default initialization is smaller, with up to 5 orders of magnitude reduction. Under both initialization methods, the RNN-BP method produces  the smallest  errors.  Under  the uniform random initialization with ($R_m=1$), the RNN-Scaling method is   more accurate than the RNN method,  with up to a 5 orders of magnitude reduction in error. 
Figs. \ref{exp2-error}(a)-(c) plot  absolute   errors  of the three methods, which shows that the RNN-Scaling method reduces the error around significantly and the  RNN-BP method is exact on the boundary.

\begin{table}[htbp]
	\centering 
	\scriptsize
	\caption{Comparison of three   methods   with the default  initialization    for Example 2.2.}
	\begin{tabular}{ccccccccc}
		\toprule
		\multicolumn{1}{l}{method} & $N$   & $M$   & 50    & 100   & 150   & 200   & 250   & 300 \\
		\midrule
		\multicolumn{1}{l}{RNN} & 8     & $\|e\|_{L^2}$ & 1.29e+2 & 4.61e-2 & 4.77e-5 & 1.16e-5 & 1.22e-5 & 1.80e-5 \\
		& 12    & $\|e\|_{L^2}$ & 1.15e+2 & 1.12e-1 & 6.48e-4 & 7.91e-7 & 5.14e-7 & 4.54e-7 \\
		& 16    & $\|e\|_{L^2}$ & 1.12e+2 & 2.16e-1 & 9.74e-4 & 8.81e-6 & 9.12e-7 & 6.00e-6 \\
		& 24    & $\|e\|_{L^2}$ & 1.13e+2 & 3.42e-1 & 1.17e-3 & 1.44e-5 & 5.13e-6 & 5.87e-6 \\
		& 32    & $\|e\|_{L^2}$ & 1.15e+2 & 4.18e-1 & 1.25e-3 & 1.65e-5 & 6.47e-6 & 2.41e-5 \\
		\midrule
		\multicolumn{1}{l}{RNN-S } & 8     & $\|e\|_{L^2}$ & 2.06e-1 & 1.21e-4 & 1.41e-5 & 8.28e-6 & 9.92e-6 & 6.01e-6 \\
		& 12    & $\|e\|_{L^2}$ & 1.76e-1 & 6.80e-5 & 3.22e-7 & 2.20e-7 & 4.97e-8 & 5.18e-8 \\
		& 16    & $\|e\|_{L^2}$ & 1.67e-1 & 5.36e-5 & 3.32e-7 & 1.29e-7 & 1.53e-7 & 7.00e-8 \\
		& 24    & $\|e\|_{L^2}$ & 1.82e-1 & 1.14e-4 & 8.63e-7 & 8.79e-7 & 6.97e-7 & 7.03e-8 \\
		& 32    & $\|e\|_{L^2}$ & 2.19e-1 & 1.50e-4 & 2.55e-6 & 3.17e-6 & 4.91e-6 & 7.05e-7 \\
		\midrule
		\multicolumn{1}{l}{RNN-BP} & 8     & $\|e\|_{L^2}$ & 8.12e-6 & 2.19e-6 & 7.74e-7 & 3.76e-7 & 3.25e-7 & 3.75e-7 \\
		& 12    & $\|e\|_{L^2}$ & 1.47e-5 & 5.28e-9 & 3.22e-10 & 2.11e-10 & 1.64e-10 & 9.05e-11 \\
		& 16    & $\|e\|_{L^2}$ & 1.37e-5 & 1.12e-9 & 8.14e-12 & 1.67e-12 & 5.67e-13 & 1.18e-12 \\
		& 24    & $\|e\|_{L^2}$ & 1.32e-5 & 2.05e-9 & 6.61e-12 & 1.58e-12 & 9.17e-13 & 1.37e-12 \\
		& 32    & $\|e\|_{L^2}$ & 1.30e-5 & 2.58e-9 & 1.04e-11 & 1.04e-12 & 7.39e-13 & 1.94e-12 \\
		\bottomrule
	\end{tabular}%
	\label{exp2-km}%
\end{table}%

\begin{table}[htbp]
	\centering
	\scriptsize
	\caption{Comparison of three  methods   with uniform random  initialization  ($R_m =1$)  for Example 2.2.}
	\begin{tabular}{ccccccccc}
		\toprule
		\multicolumn{1}{l}{method} & $N$   & $M$   & 50    & 100   & 150   & 200   & 250   & 300 \\
		\midrule
		\multicolumn{1}{l}{RNN} & 8     & $\|e\|_{L^2}$ & 2.09e+1 & 1.87e-2 & 2.48e-3 & 2.26e-3 & 1.31e-3 & 3.17e-3 \\
		& 12    & $\|e\|_{L^2}$ & 2.09e+1 & 1.87e-2 & 2.48e-3 & 2.26e-3 & 1.31e-3 & 3.17e-3 \\
		& 16    & $\|e\|_{L^2}$ & 2.09e+1 & 1.87e-2 & 2.48e-3 & 2.26e-3 & 1.31e-3 & 3.17e-3 \\
		& 24    & $\|e\|_{L^2}$ & 1.69e+1 & 1.31e+1 & 3.35e-1 & 4.36e-2 & 2.59e-3 & 1.55e-4 \\
		& 32    & $\|e\|_{L^2}$ & 1.66e+1 & 1.41e+1 & 3.64e-1 & 5.45e-2 & 3.10e-3 & 2.61e-4 \\
		\midrule
		\multicolumn{1}{l}{RNN-S } & 8     & $\|e\|_{L^2}$ & 3.03e-1 & 7.73e-3 & 2.48e-3 & 2.26e-3 & 1.31e-3 & 3.17e-3 \\
		& 12    & $\|e\|_{L^2}$ & 2.02e-1 & 7.74e-3 & 2.24e-4 & 5.19e-6 & 2.82e-6 & 1.27e-6 \\
		& 16    & $\|e\|_{L^2}$ & 1.69e-1 & 1.40e-2 & 1.65e-4 & 4.60e-6 & 1.63e-7 & 1.94e-8 \\
		& 24    & $\|e\|_{L^2}$ & 2.65e-1 & 1.54e-2 & 2.48e-4 & 6.65e-6 & 1.20e-7 & 4.32e-9 \\
		& 32    & $\|e\|_{L^2}$ & 5.11e-1 & 1.66e-2 & 3.15e-4 & 8.86e-6 & 2.30e-7 & 6.33e-9 \\
		\midrule
		\multicolumn{1}{l}{RNN-BP} & 8     & $\|e\|_{L^2}$ & 2.03e-2 & 6.73e-3 & 4.88e-3 & 5.21e-3 & 4.64e-3 & 1.17e-2 \\
		& 12    & $\|e\|_{L^2}$ & 5.73e-2 & 2.70e-4 & 2.38e-5 & 1.57e-5 & 1.42e-5 & 8.63e-6 \\
		& 16    & $\|e\|_{L^2}$ & 5.25e-2 & 3.07e-4 & 3.23e-6 & 4.44e-7 & 4.19e-7 & 9.10e-8 \\
		& 24    & $\|e\|_{L^2}$ & 4.77e-2 & 3.14e-4 & 7.98e-6 & 9.57e-8 & 7.94e-9 & 3.47e-10 \\
		& 32    & $\|e\|_{L^2}$ & 4.53e-2 & 3.26e-4 & 9.20e-6 & 8.52e-8 & 6.61e-9 & 4.68e-10 \\
		\bottomrule
	\end{tabular}%
	\label{exp2-Rm1}%
\end{table}%

\begin{figure}[!htbp]
	\centering 
	\begin{subfigure}[t]{0.3\linewidth}
		\centering
		\includegraphics[width=1.9in]{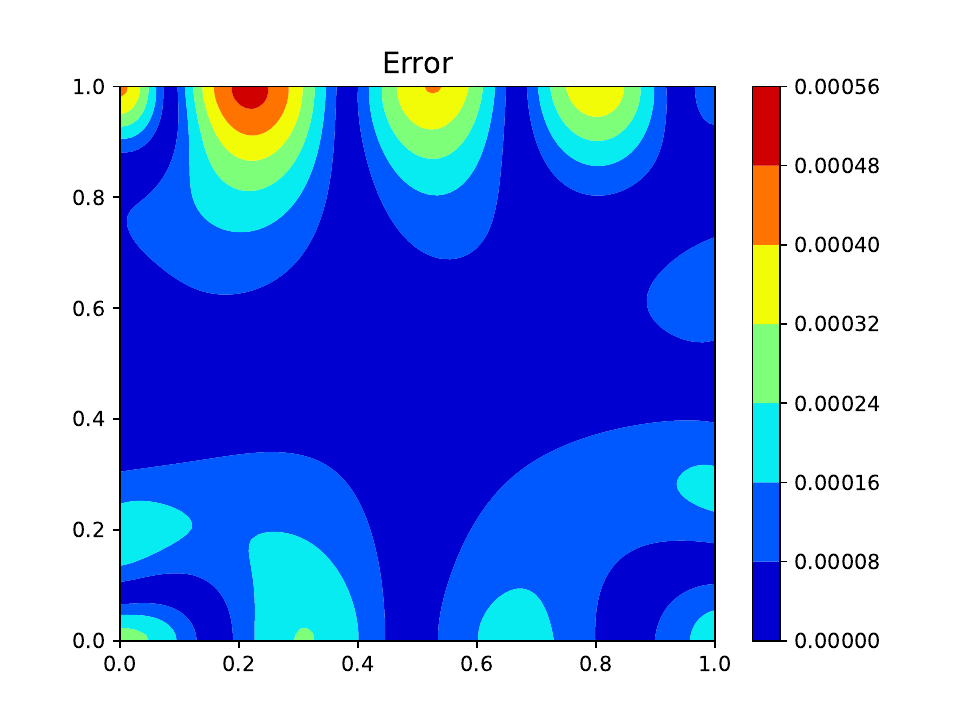}
		\caption{The absolute  error of RNN.}  
	\end{subfigure}
	\hfill
	\begin{subfigure}[t]{0.3\linewidth}
		\centering
		\includegraphics[width=1.9in]{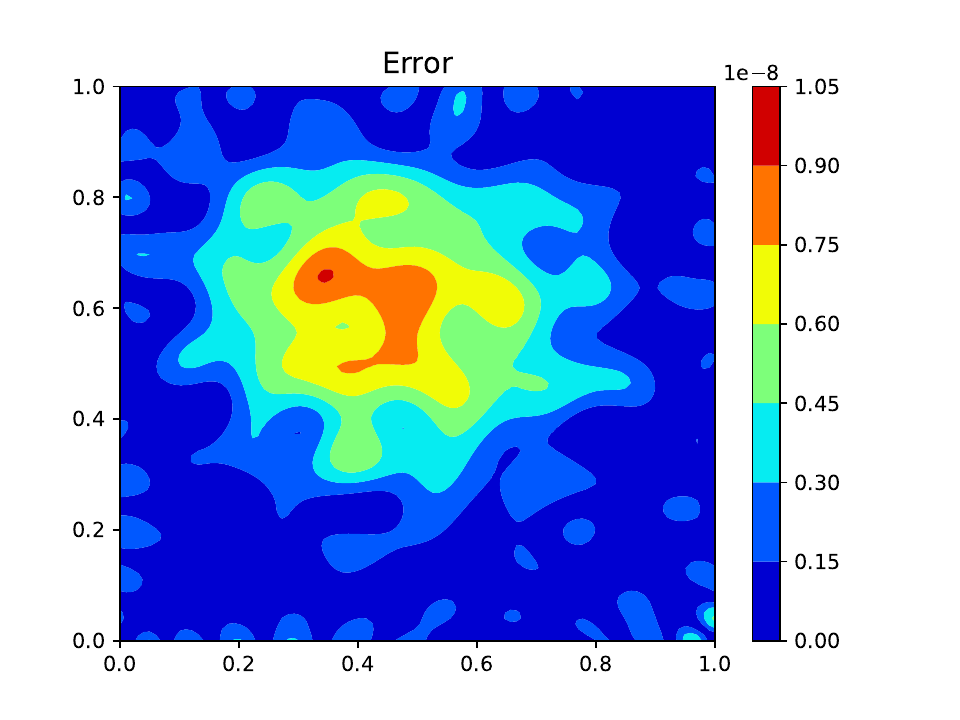}
		\caption{The absolute  error of RNN-Scaling.}  
	\end{subfigure}
	\hfill
	\begin{subfigure}[t]{0.3\linewidth}
		\centering
		\includegraphics[width=1.9in]{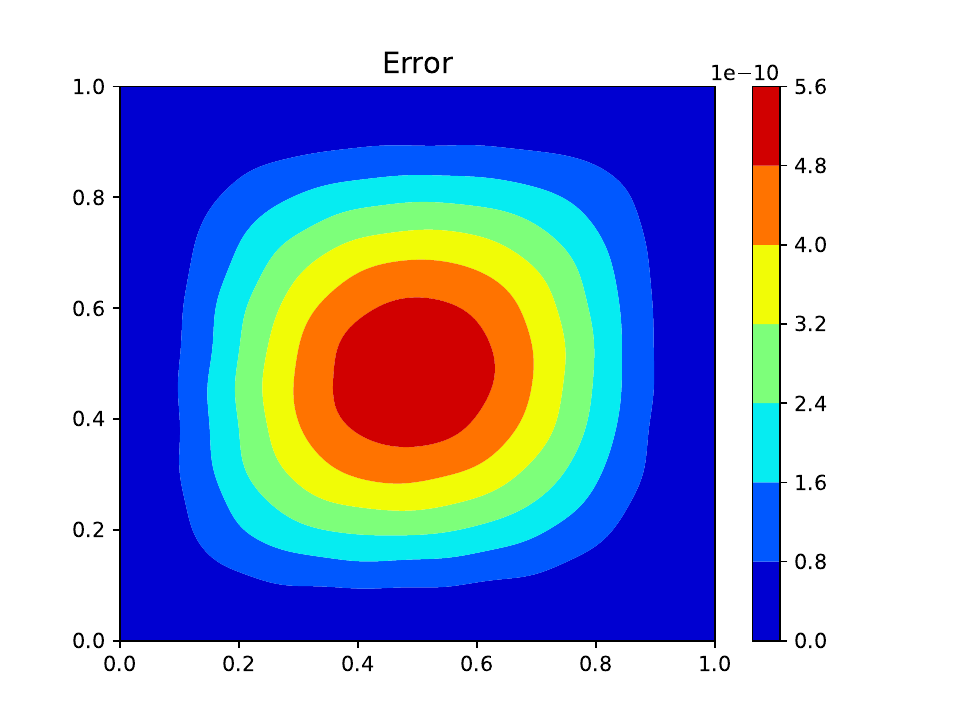}
		\caption{The absolute  error of RNN-BP.}  
	\end{subfigure}
	\caption{The absolute  errors of three  methods  with uniform random  initialization    for example 2.2. (The neural network  architecture is [2, 100, 300, 1] and $N=32$.)}  \label{exp2-error}
\end{figure}

Figs. \ref{exp2-compare-N-M}(a)-(b) compare  the three methods with different numbers of  collocation points, with Fig. \ref{exp2-compare-N-M}(a) employing the  default initialization and Fig. \ref{exp2-compare-N-M}(b)  employing  uniform random initialization and with the network fixed as [2, 100, 300, 1].  As the numbers of collocation points increase, the errors of the three methods decrease rapidly. Figs. \ref{exp2-compare-N-M}(c)-(d)  compare the three methods for different $M$, with the network fixed as [2, 100, $M$, 1] and $N$ fixed as 32.  As $M$  increases, the errors of the three methods also decrease rapidly.
 From Figs. \ref{exp2-compare-N-M}(a)-(d), it can be observed  that the RNN-BP method consistently produces the smallest error,  and  the RNN-Scaling method is more accurate than the RNN method  under  uniform random initialization ($R_m=1$).

Fig. \ref{exp2-compare-Rm} compares the  three methods for different values of $R_m$, with the  fixed network   [2, 100, 300, 1] and $N=32$. The three methods achieve smaller errors when $R_m$ is around 0.2 to 1.1.
The smallest errors of the RNN-BP, RNN-Scaling, and RNN methods are magnitudes of  $10^{-12}$, $10^{-10}$,  and $10^{-6}$, respectively, indicating the effectiveness of the  RNN-BP and RNN-Scaling methods.

\begin{figure}[!htbp]
	\centering 
	\hspace{-12mm}
	 \begin{subfigure}[t]{0.40\linewidth}
		\centering
		\includegraphics[width=2.8in]{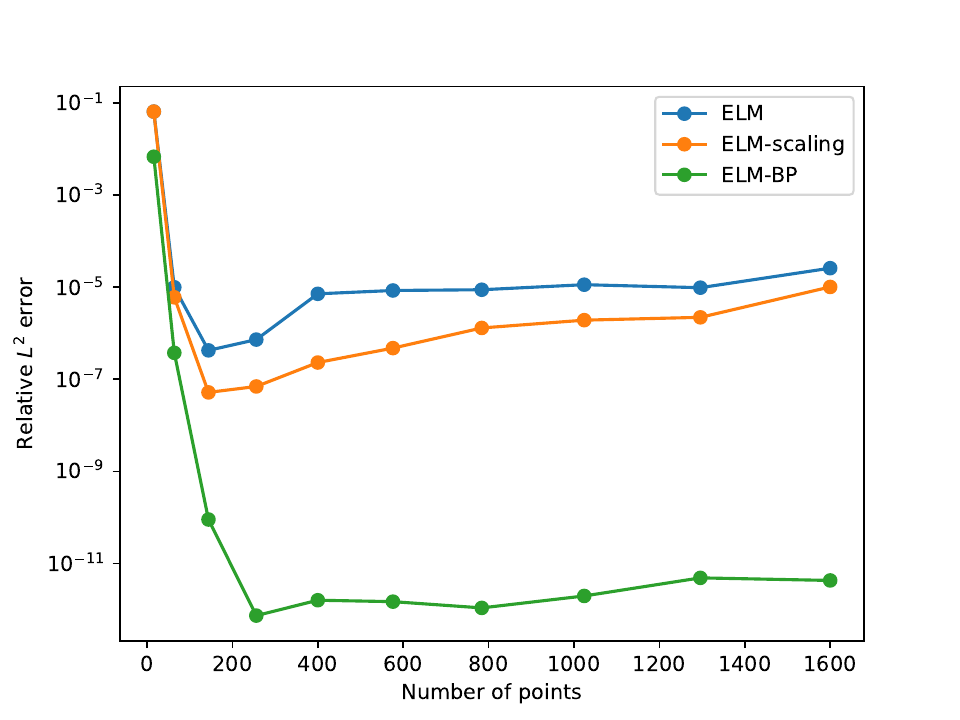}
		\caption{}
	\end{subfigure}
%	\hfill % 使用hfill来保证两个子图之间的空间均匀分布
	\hspace{8mm}
	\begin{subfigure}[t]{0.40\linewidth}
		\centering
		\includegraphics[width=2.8in]{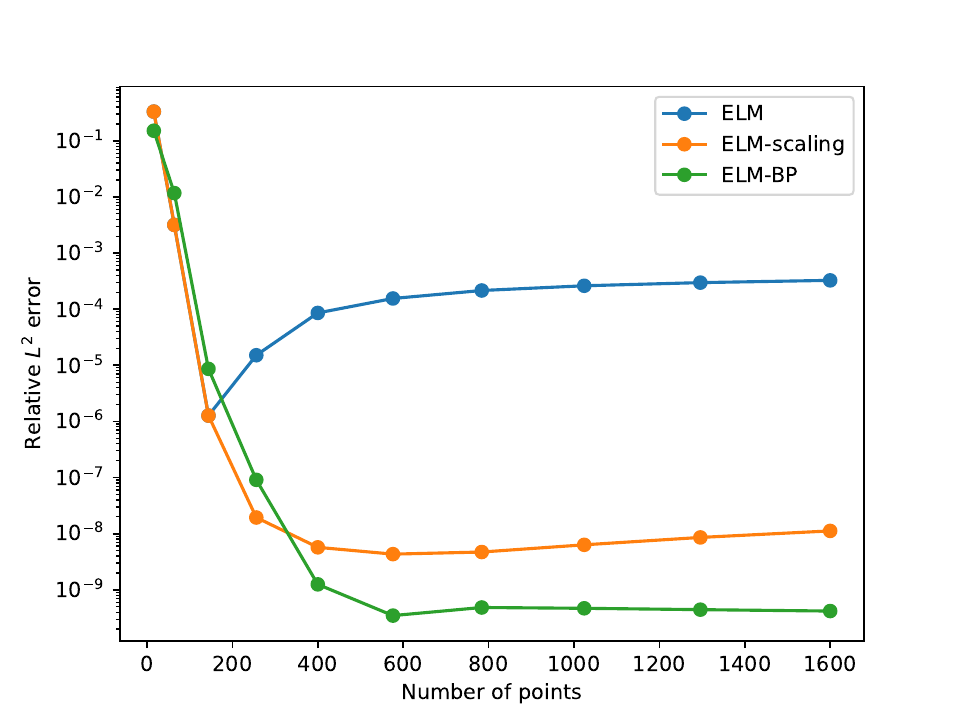}
		\caption{}
	\end{subfigure} 
	\\
	\hspace{-12mm}
	\begin{subfigure}[t]{0.40\linewidth}
		\centering
		\includegraphics[width=2.8in]{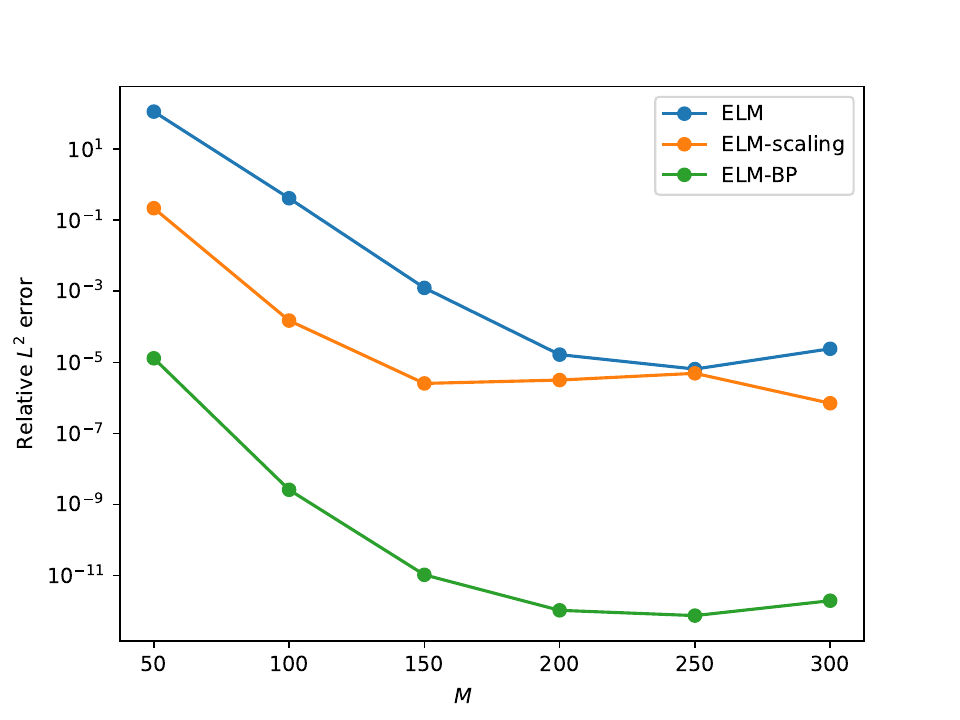}
		\caption{}
	\end{subfigure}
	%\hfill % 使用hfill来保证两个子图之间的空间均匀分布
	\hspace{8mm}
	\begin{subfigure}[t]{0.4\linewidth}
		\centering
		\includegraphics[width=2.8in]{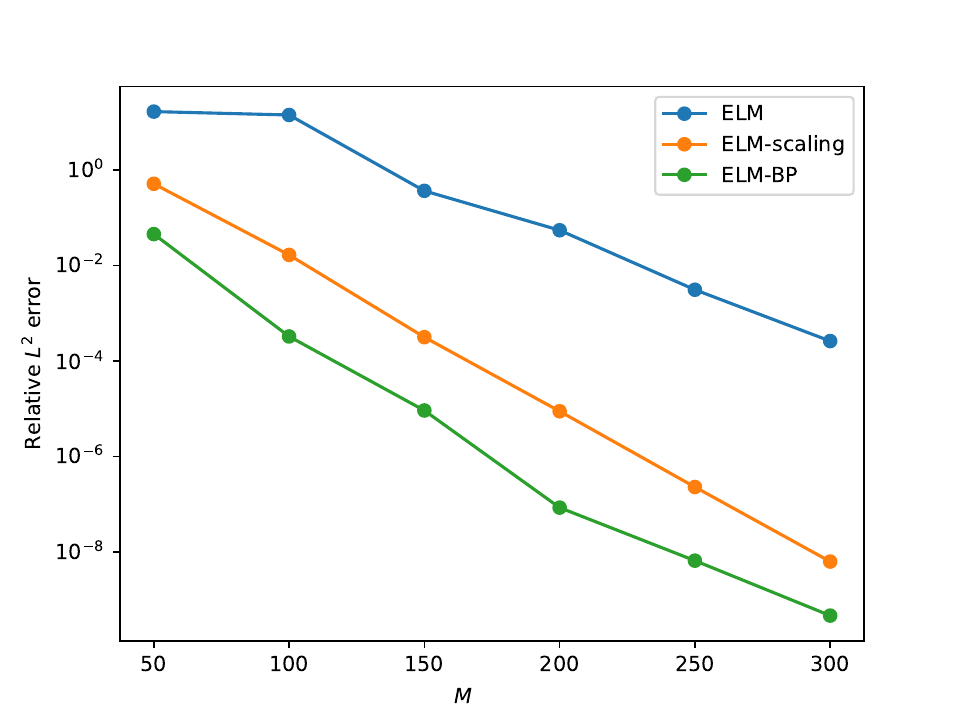}
		\caption{}
	\end{subfigure}
	\caption{Comparison of three methods for Example 2.2: (a) Varying numbers of points with the default initialization. (b) Varying numbers of points with uniform random initialization ($R_m = 1$).  (c) Varying   $M$ with the  default initialization.
		(d) Varying   $M$ with uniform random initialization ($R_m = 1$). }  \label{exp2-compare-N-M}
\end{figure}

\begin{figure}[!htbp]
	\centering 
	\includegraphics[width=2.6in]{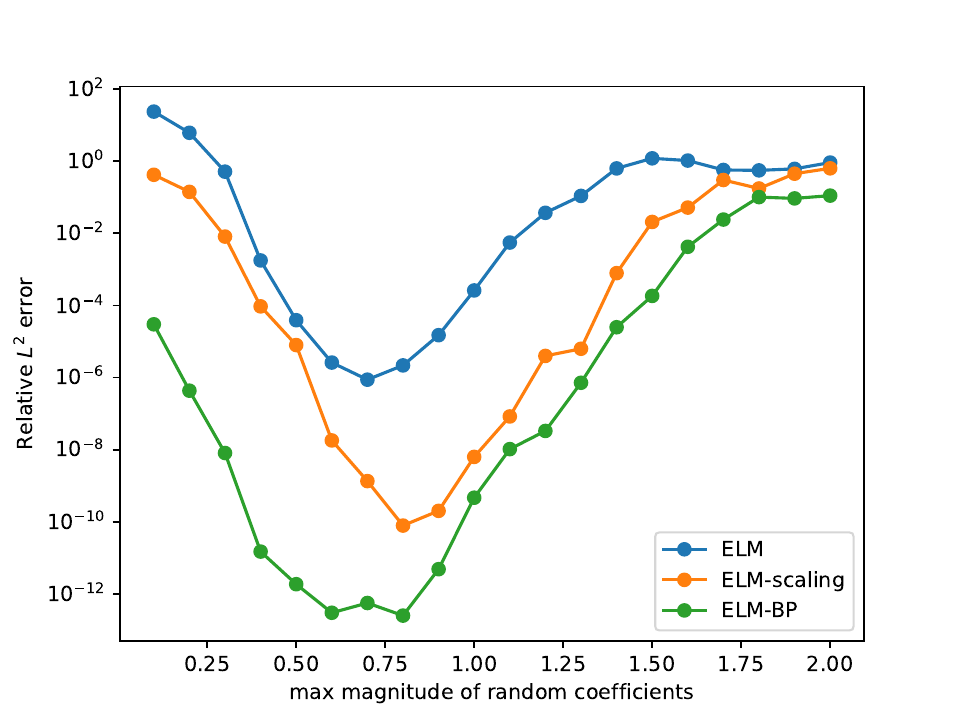}
	\caption{Comparison of relative $L^2$ errors of three methods varying max magnitude of random coefficients for Example 2.2.}  \label{exp2-compare-Rm}
\end{figure}

\subsubsection{Example 2.3}
This example considers the exact solution  with the exponential form
\begin{equation*}
	u(x,y) = \mathrm{e}^{x^2+y^2+xy}
\end{equation*}
on $\Omega = (0,1)\times (0,1)$, with the source term $f(x,y)$ and boundary condition corresponding to this solution.

Similar to Example 2.1, we first present Figs. \ref{exp3-compare-N-M}(a)-(b), which compare   the three methods across varying numbers of collocation points, with the fixed network [2, 100, 300, 1].  
Subsequently, Figs. \ref{exp3-compare-N-M}(c)-(d)   compare  the three methods with different    $M$, where network architecture is [2, 100, $M$, 1], with fixed $N=32$.
Figs. \ref{exp3-compare-N-M}(a) and (c) employ the default initialization, and Figs. \ref{exp3-compare-N-M}(b) and (d) utilize uniform random initialization ($R_m=1$), respectively.
As in Example 2.1,  the RNN-BP method consistently produces the smallest error, with the error level reaching up to $10^{-14}$.  Figs. \ref{exp3-error}(a)-(c) present absolute   errors  for the three methods with network architecture [2,  100,  300,  1] and $N=32$.
The RNN-Scaling method produces   smaller errors than the  RNN method,   with a reduction of up to 5 orders of magnitude when employing uniform random initialization.    

\begin{figure}[!htbp]
	\centering 
	\begin{subfigure}[t]{0.32\linewidth}
		\centering
		\includegraphics[width=1.9in]{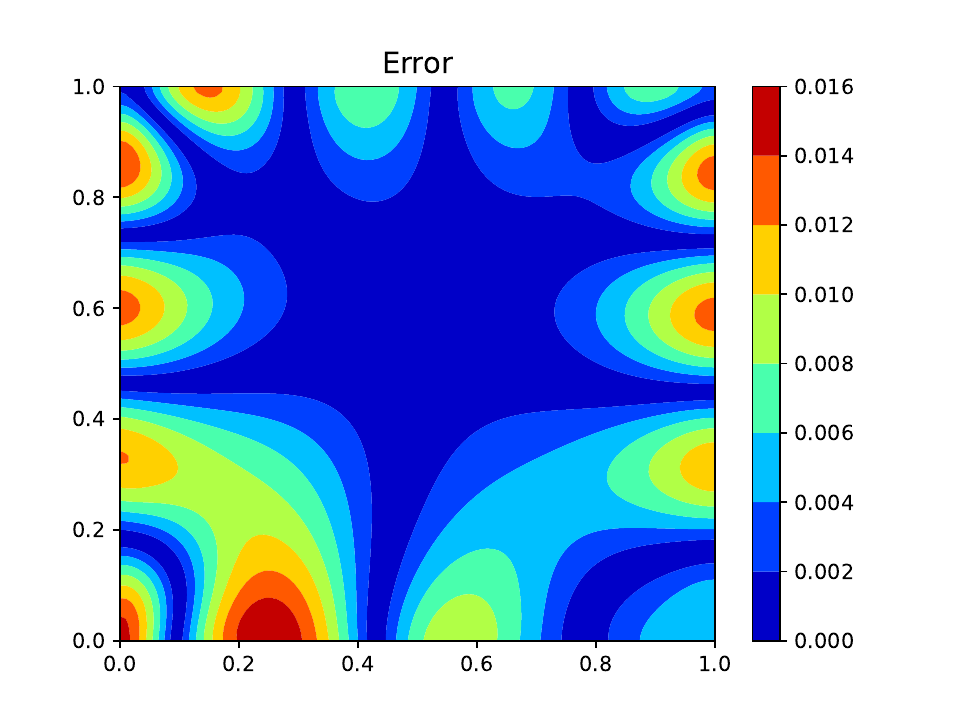}
		\caption{The absolute  error of the  RNN method.}  
	\end{subfigure}
	\hfill
	\begin{subfigure}[t]{0.32\linewidth}
		\centering
		\includegraphics[width=1.9in]{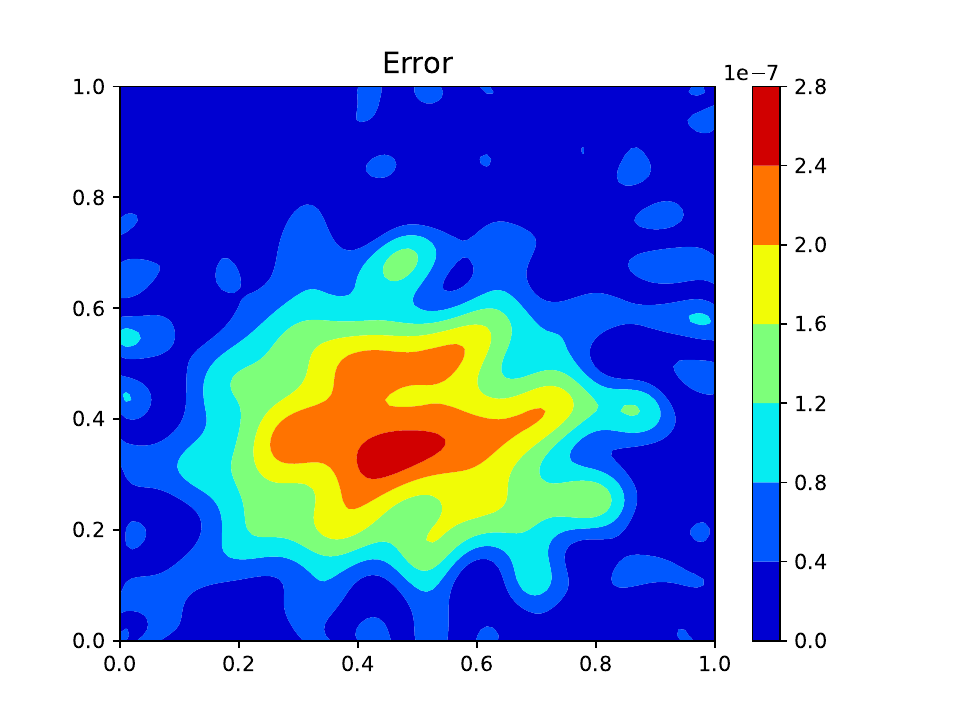}
		\caption{The absolute  error of the RNN-Scaling method.}  
	\end{subfigure}
	\hfill
	\begin{subfigure}[t]{0.32\linewidth}
		\centering
		\includegraphics[width=1.9in]{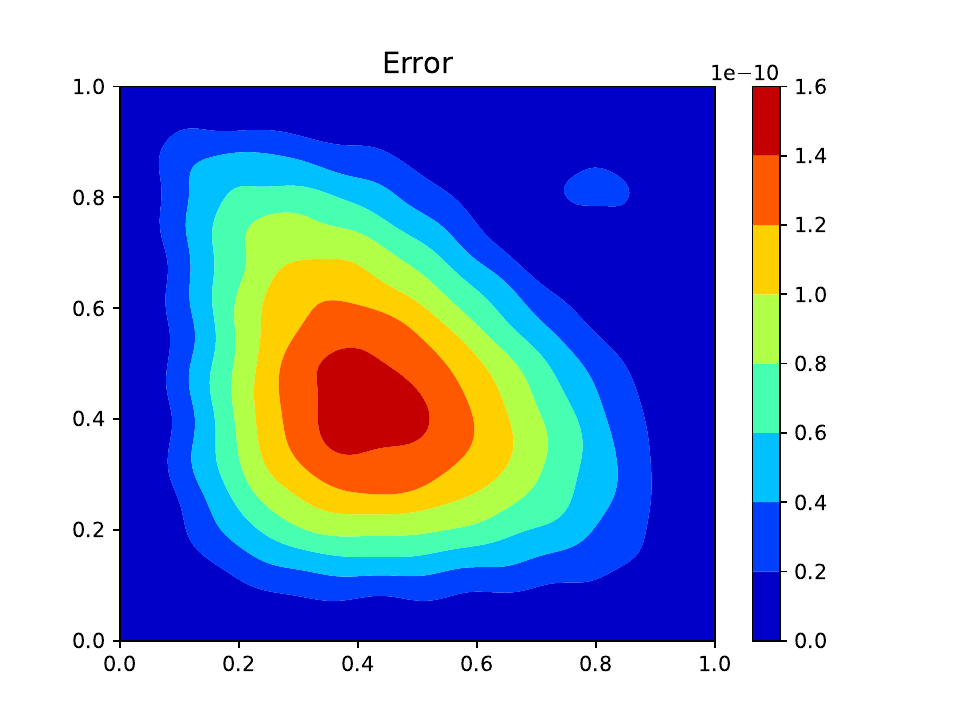}
		\caption{The absolute  error of the RNN-BP method.}  
	\end{subfigure}
	\caption{The absolute  errors of three  methods  with uniform random  initialization    for Example 2.3. (The neural network  architecture is [2, 100, 300, 1] and $N=32$.)}  \label{exp3-error}
\end{figure}

Fig. \ref{exp3-compare-Rm} shows a comparison of   the three methods for different values of $R_m$, with the  fixed network   [2, 100, 300, 1] and $N=32$. The three methods achieve smaller errors when $R_m$ is around 0.2 to 1.3.
The smallest errors of the RNN-BP, RNN-Scaling, and RNN methods are magnitude of  $10^{-13}$, $10^{-10}$, and $10^{-6}$, respectively, indicating the effectiveness of the RNN-BP and RNN-Scaling methods.

\begin{figure}[!htbp]
	\centering 
	\hspace{-10mm}
	\begin{subfigure}[t]{0.40\linewidth}
		\centering
		\includegraphics[width=2.8in]{Pics/exp1/exp1_N_km.pdf}
		\caption{}
	\end{subfigure}
	%\hfill
	\hspace{8mm}
	\begin{subfigure}[t]{0.40\linewidth}
		\centering
		\includegraphics[width=2.8in]{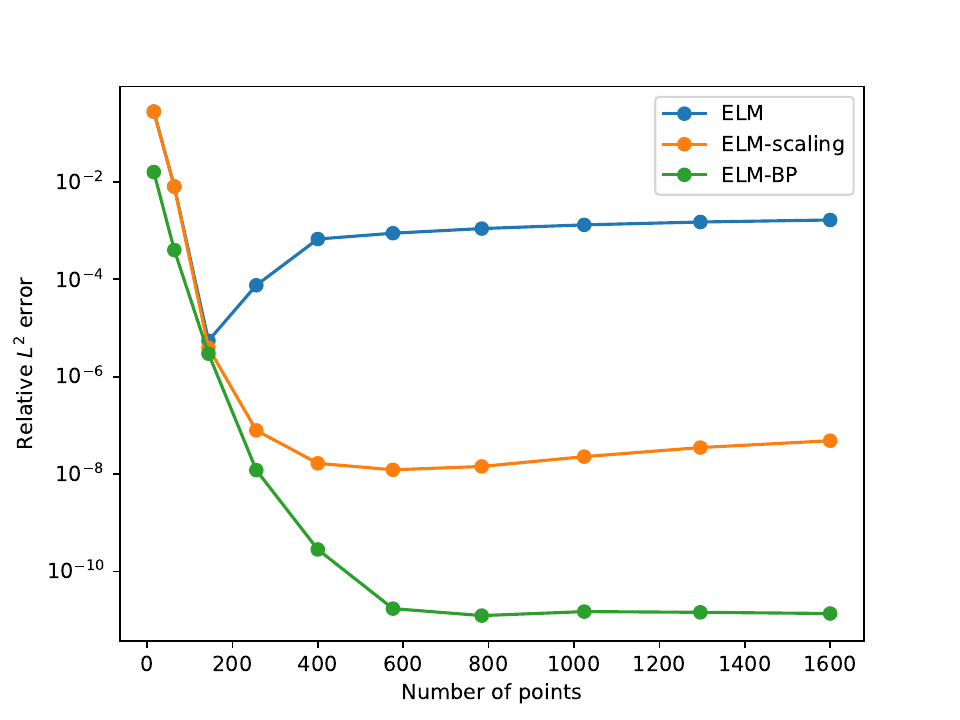}
		\caption{}
	\end{subfigure}
	\\
		\hspace{-10mm}
	\begin{subfigure}[t]{0.40\linewidth}
		\centering
		\includegraphics[width=2.8in]{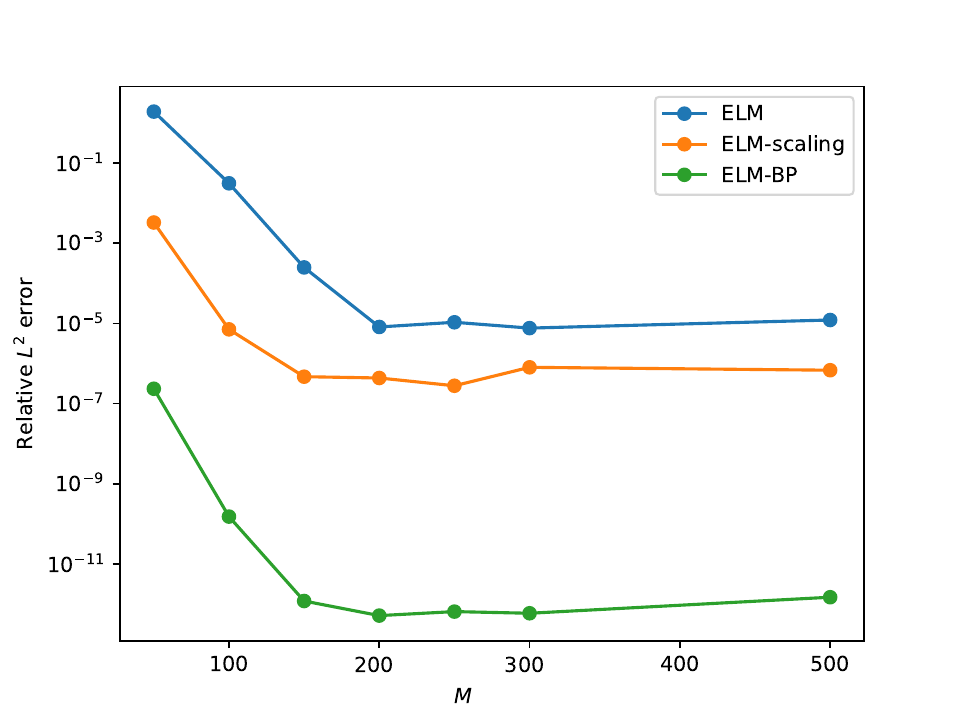}
		\caption{}
	\end{subfigure}
	%\hfill 
	\hspace{8mm}
	\begin{subfigure}[t]{0.4\linewidth}
		\centering
		\includegraphics[width=2.8in]{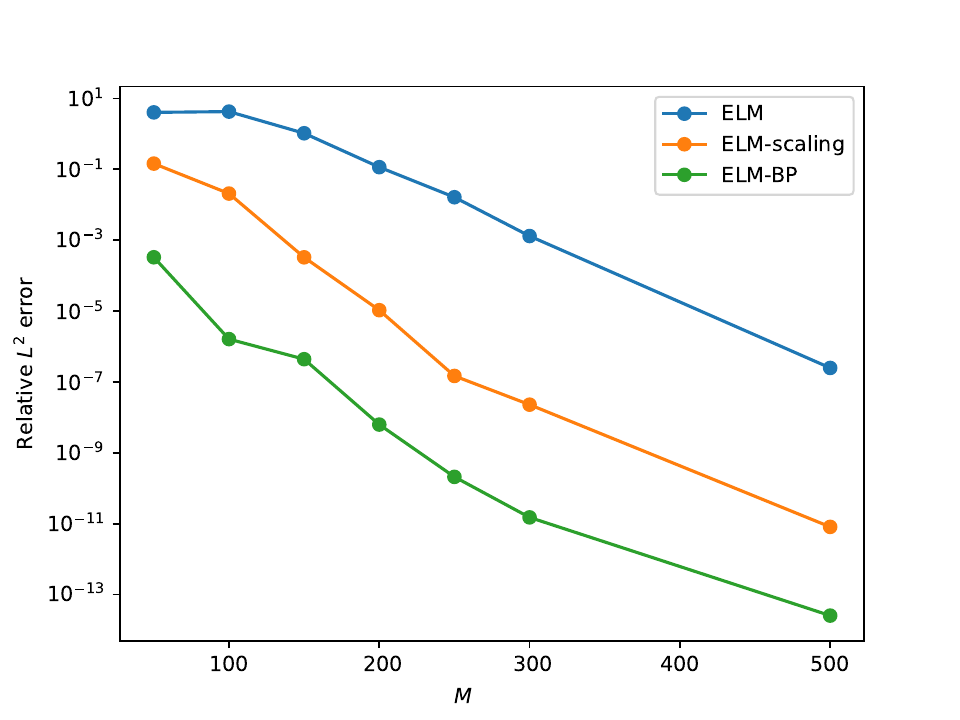}
		\caption{}
	\end{subfigure}
	\caption{Comparison   of three methods for Example 2.3: (a) Varying numbers of points with the default initialization. (b) Varying numbers of points with uniform random initialization ($R_m = 1$).  (c) Varying   $M$ with the  default initialization.
		(d) Varying  $M$ with the uniform random initialization ($R_m = 1$). }  \label{exp3-compare-N-M}
\end{figure}

\begin{figure}[!htbp]
	\centering 
	\includegraphics[width=2.6in]{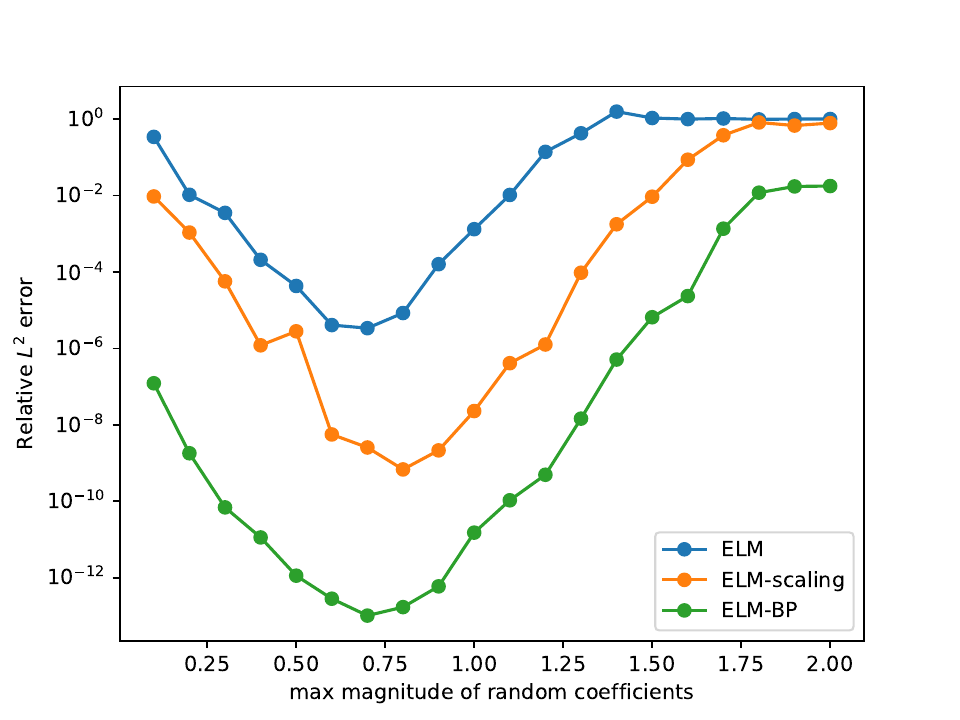}
	\caption{Comparison of relative $L^2$ errors of three methods varying max magnitude of random coefficients for Example 2.3.}\label{exp3-compare-Rm}
\end{figure}

\subsubsection{Example 2.4}
This example validates that the RNN-BP method is exact for solutions of the form \eqref{form-bihar}. We consider the exact solution on $\Omega = (0,1)\times (0,1)$:
\begin{equation*}
	u(x,y) = x^{10} + y^{10} + x^k \sin y + y^k\cos x.
\end{equation*}

 For $k=3$ and $k=4$,  the three methods are compared  for different numbers of collocation points with the fixed network [2,  100,  300,  1]  and uniform random initialization ($R_m=1$), where the results are presented in Table \ref{exp4}.  For the RNN-BP method, it can be seen that its relative $L^2$ errors achieve  machine precision when $k=3$, and  the errors  decrease  rapidly as the numbers of collocation points increase when $k=4$, with the error level reaching up to $10^{-14}$. This verifies that the RNN-BP method is exact for solutions of the form \eqref{form-bihar}. We can also observe that as the numbers of collocation points increase, the errors of the  RNN   initially decrease but then   increase. In contrast, the errors of the  RNN-Scaling      decrease  as the numbers of collocation points increase, and the errors are much smaller than those of the RNN method.

\begin{table}[htbp]
	\centering
	\scriptsize
	\caption{Comparison of three   methods  with different $N$ for Example 2.4.} \label{exp4}
	\begin{tabular}{clccccccc}
		\toprule
		$k$   & $N$   & 4     & 8     & 12    & 16    & 20    & 24    & 28    \\
		\midrule
		3     & RNN   & 7.30e-1 & 5.18e-2 & 5.55e-5 & 2.34e-4 & 3.75e-3 & 5.91e-3 & 7.47e-3 \\
		& RNN-S & 7.30e-1 & 5.18e-2 & 5.16e-5 & 1.07e-6 & 1.44e-7 & 1.35e-7 & 1.49e-7 \\
		& RNN-BP & 4.02e-16 & 4.03e-16 & 4.01e-16 & 9.82e-16 & 4.03e-16 & 4.00e-16 & 4.03e-16 \\
		\midrule
		4     & RNN   & 8.07e-1 & 5.65e-2 & 5.62e-5 & 2.53e-4 & 4.08e-3 & 6.42e-3 & 8.11e-3 \\
		& RNN-S & 8.07e-1 & 5.65e-2 & 5.62e-5 & 1.16e-6 & 1.56e-7 & 1.46e-7 & 1.61e-7 \\
		& RNN-BP & 1.20e-4 & 1.22e-6 & 3.96e-9 & 1.56e-11 & 1.21e-13 & 5.44e-14 & 5.79e-14 \\
		\bottomrule
	\end{tabular}%
\end{table}%

\subsubsection{Example 2.5}
The fifth numerical example considers the following exact solution on $(0,2)\times (0,2)$:
\footnotesize
\begin{equation*}
	u(x, y)=-\left[2 \cos \left(\frac{3}{2} \pi x+\frac{2 \pi}{5}\right)+\frac{3}{2} \cos \left(3 \pi x-\frac{\pi}{5}\right)\right]\left[2 \cos \left(\frac{3}{2} \pi y+\frac{2 \pi}{5}\right)+\frac{3}{2} \cos \left(3 \pi y-\frac{\pi}{5}\right)\right].
\end{equation*}
\normalsize 
The figure of  exact solution is displayed in \ref{exp5-error}(a).

Fig.  \ref{exp5-compare}(a) illustrates the  comparison of the three methods for different  numbers of collocation points, with the  fixed network   [2, 100, 500, 1] and uniform random initialization ($R_m=1$).  Fig. \ref{exp5-compare}(b) illustrates the comparison of the methods for different values of $M$, with  the fixed  network [2, 100, $M$, 1] and uniform random initialization ($R_m=1$).  Finally, Fig. \ref{exp5-compare}(c) shows a comparison of the methods for different  values of $R_m$, with the  fixed network [2, 100, 500, 1] and $N=32$. The relative $L^2$ errors   for the three methods reach the   levels of $10^{-4}$, $10^{-6}$, and $10^{-10}$, respectively. The RNN-BP method consistently achieves  the smallest error, followed by the  RNN-Scaling, RNN method, which   reflects the effectiveness of the RNN-BP and RNN-Scaling methods. The absolute errors are plotted in Figs.  \ref{exp5-error}(c)-(d), the 
error of the RNN method  is  concentrated around the boundary, while the errors of the other  two methods do not.

\begin{figure}[!htbp]
	\centering 
	\begin{subfigure}[t]{0.3\linewidth}
		\centering
		\includegraphics[width=1.9in]{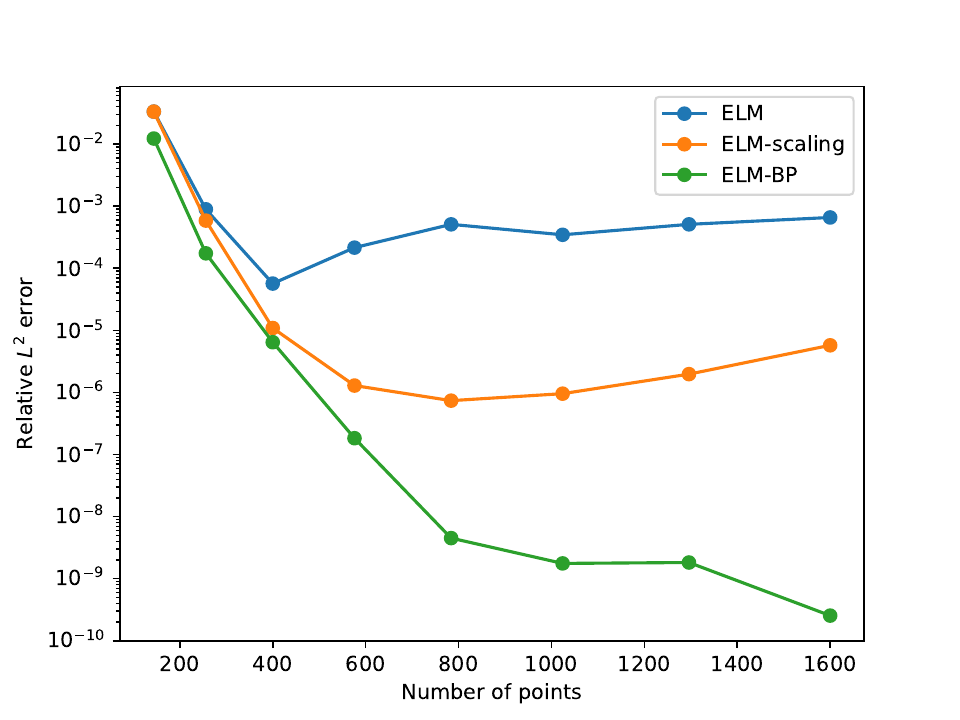}
		\caption{}
	\end{subfigure}
	\hfill
	\begin{subfigure}[t]{0.3\linewidth}
		\centering
		\includegraphics[width=1.9in]{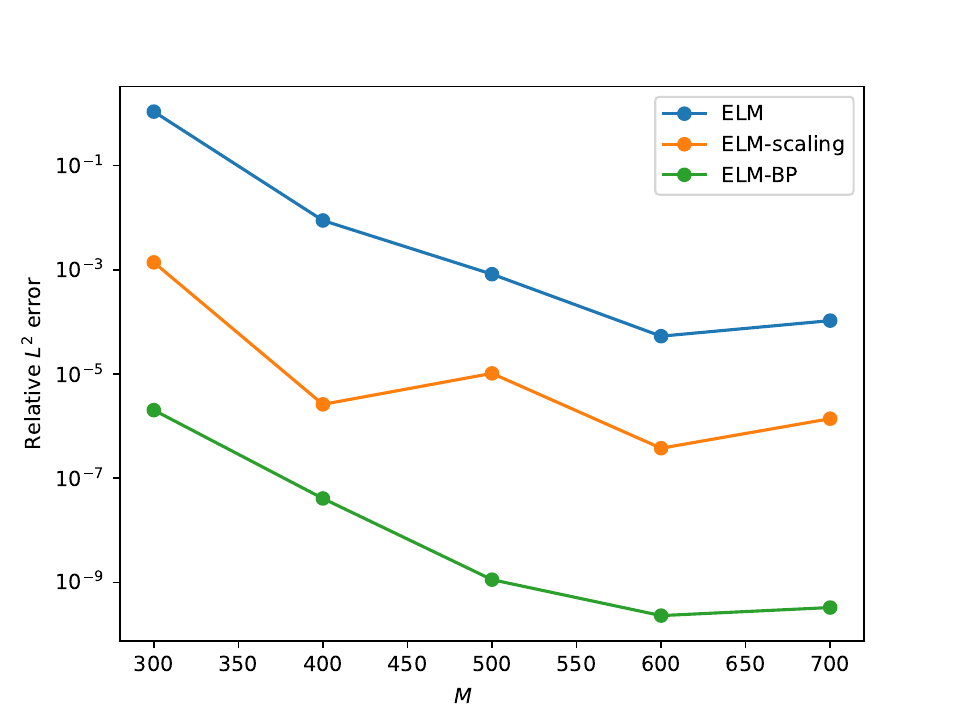}
		\caption{}
	\end{subfigure}
	\hfill 
	\begin{subfigure}[t]{0.3\linewidth}
		\centering
		\includegraphics[width=1.9in]{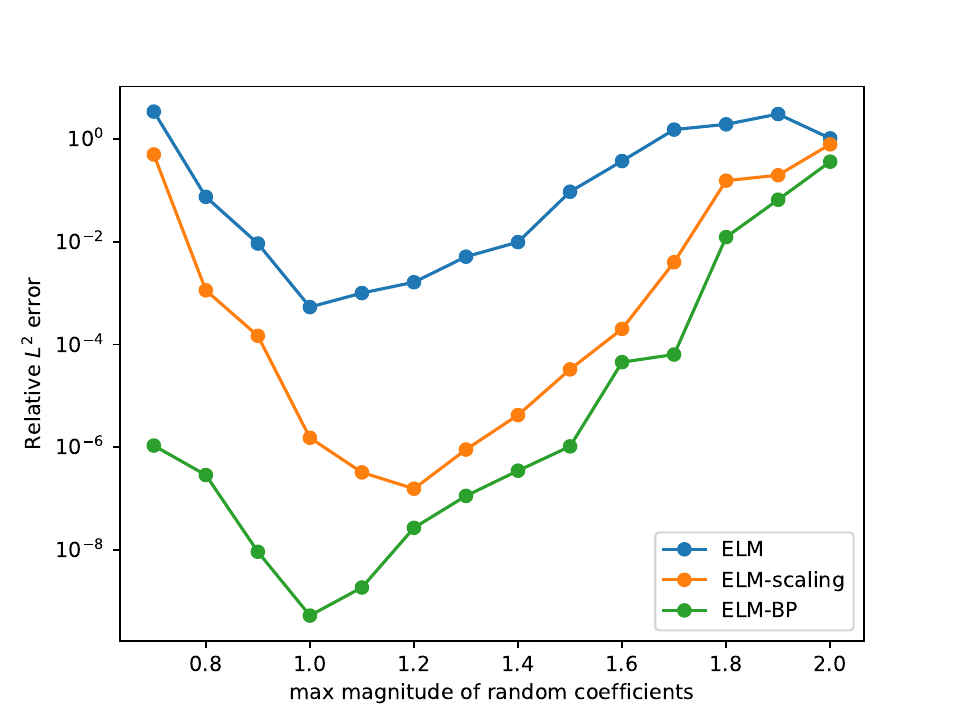}
		\caption{}
	\end{subfigure}
	\caption{Comparison of relative $L^2$ errors of three methods for Example 2.5:   (a) Varying numbers of points with uniform random initialization ($R_m = 1$) and the  fixed network architecture [2, 100, 500, 1].  (b) Varying     $M$ with uniform random initialization ($R_m = 1$),  network architecture  [2, 100, $M$, 1]  and $N=32$.
		(c) Varying $R_m$ with the  fixed network architecture  [2, 100, 500, 1] and $N = 32$. }  \label{exp5-compare}
\end{figure}

\begin{figure}[!htbp]
	\centering 
	\begin{subfigure}[t]{0.23\linewidth}
		\centering
		\includegraphics[width=1.5in]{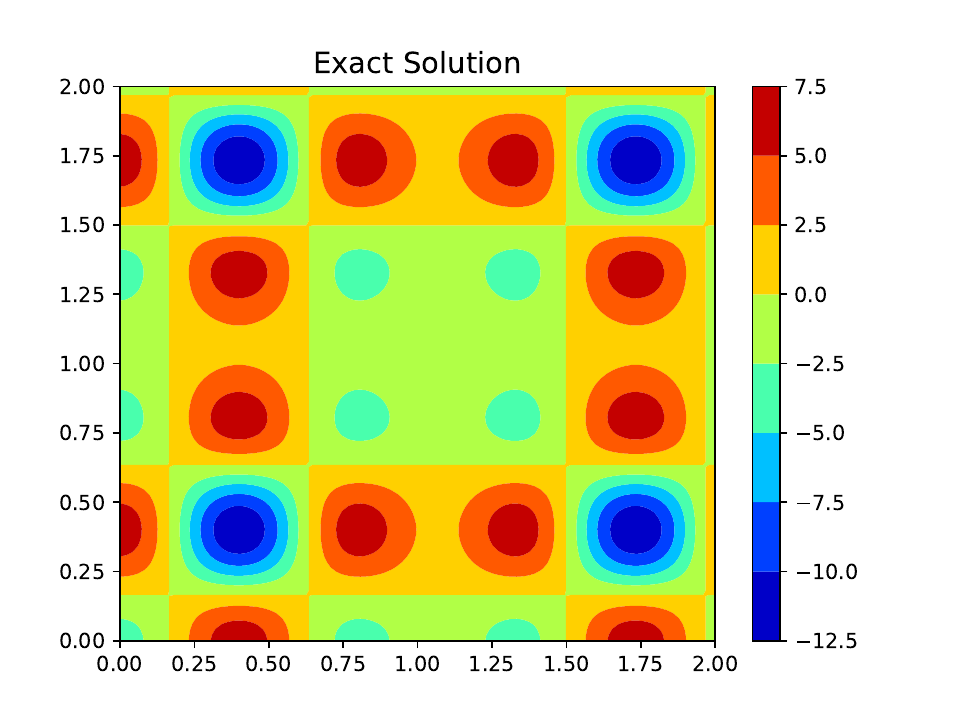}
		\caption{The exact solution.}  
	\end{subfigure}
	\hfill
	\begin{subfigure}[t]{0.23\linewidth}
		\centering
		\includegraphics[width=1.5in]{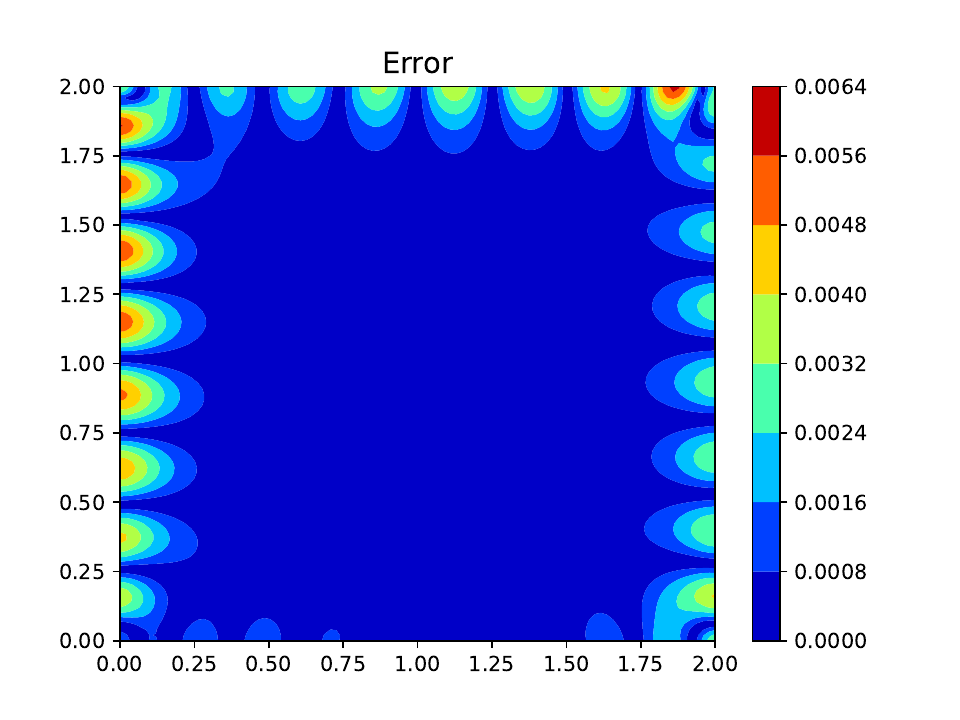}
		\caption{The absolute  error of the RNN method.}  
	\end{subfigure}
	\hfill
	\begin{subfigure}[t]{0.23\linewidth}
		\centering
		\includegraphics[width=1.5in]{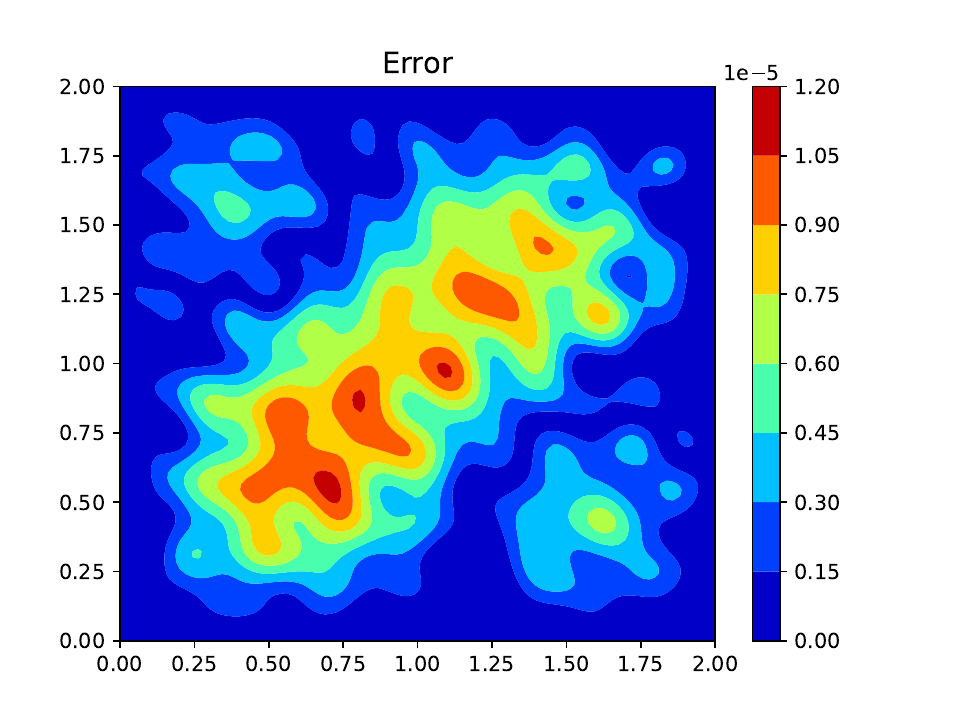}
		\caption{The absolute  error of the  RNN-Scaling method.}  
	\end{subfigure}
	\hfill
	\begin{subfigure}[t]{0.23\linewidth}
		\centering
		\includegraphics[width=1.5in]{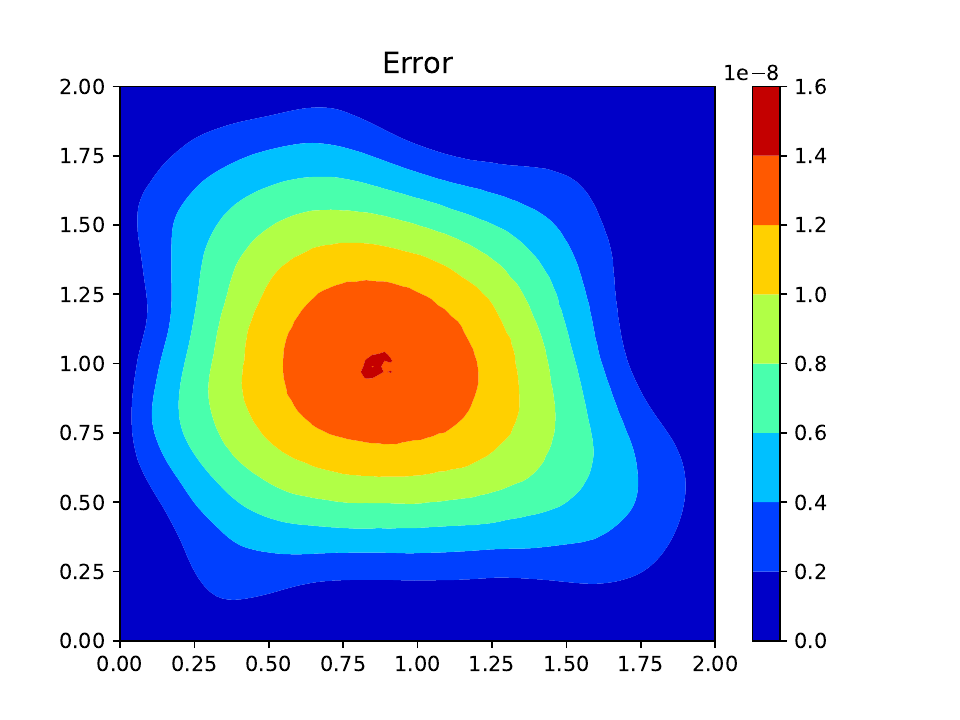}
		\caption{The absolute  error of the RNN-BP method.}  
	\end{subfigure}
	\caption{The exact solution, absolute  errors of three  methods  with uniform  random initialization ($R_m=1$) for Example 2.5. (The   network  architecture is [2, 100, 500, 1] and $N=32$.)}  \label{exp5-error}
\end{figure}

\subsubsection{Example 2.6}
The final example   considers the exact solution  
\begin{equation*}
	u(x,y) = x^4 + y^4
\end{equation*}
on the unit circle centered at   $(0,0)$.

\begin{figure}[!htbp]
	\centering 
	\begin{subfigure}[t]{0.3\linewidth}
		\centering
		\includegraphics[width=1.9in]{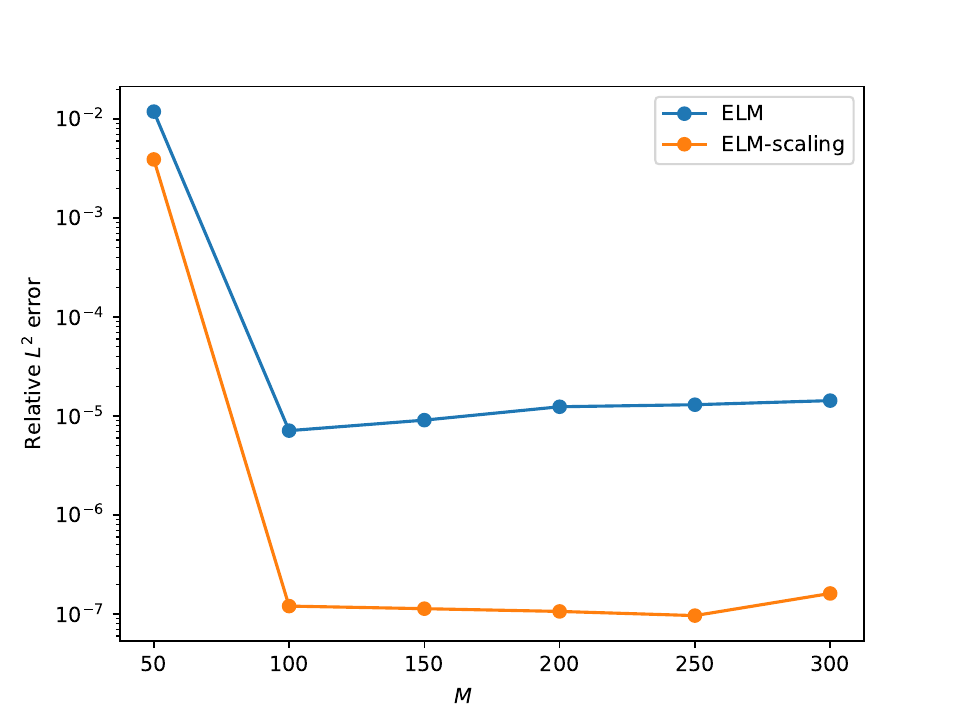}
		\caption{}
	\end{subfigure}
	\hfill
	\begin{subfigure}[t]{0.3\linewidth}
		\centering
		\includegraphics[width=1.9in]{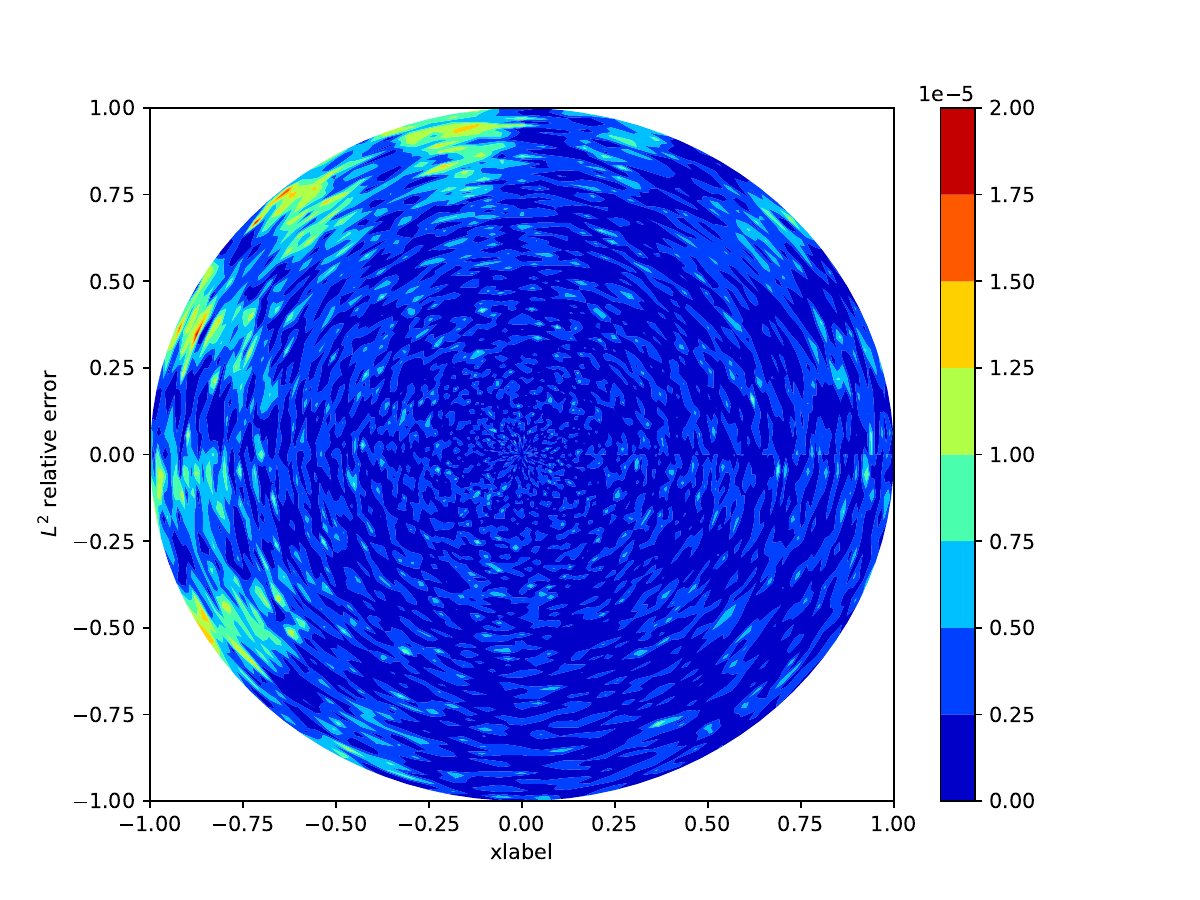}
		\caption{}
	\end{subfigure}
	\hfill 
	\begin{subfigure}[t]{0.3\linewidth}
		\centering
		\includegraphics[width=1.9in]{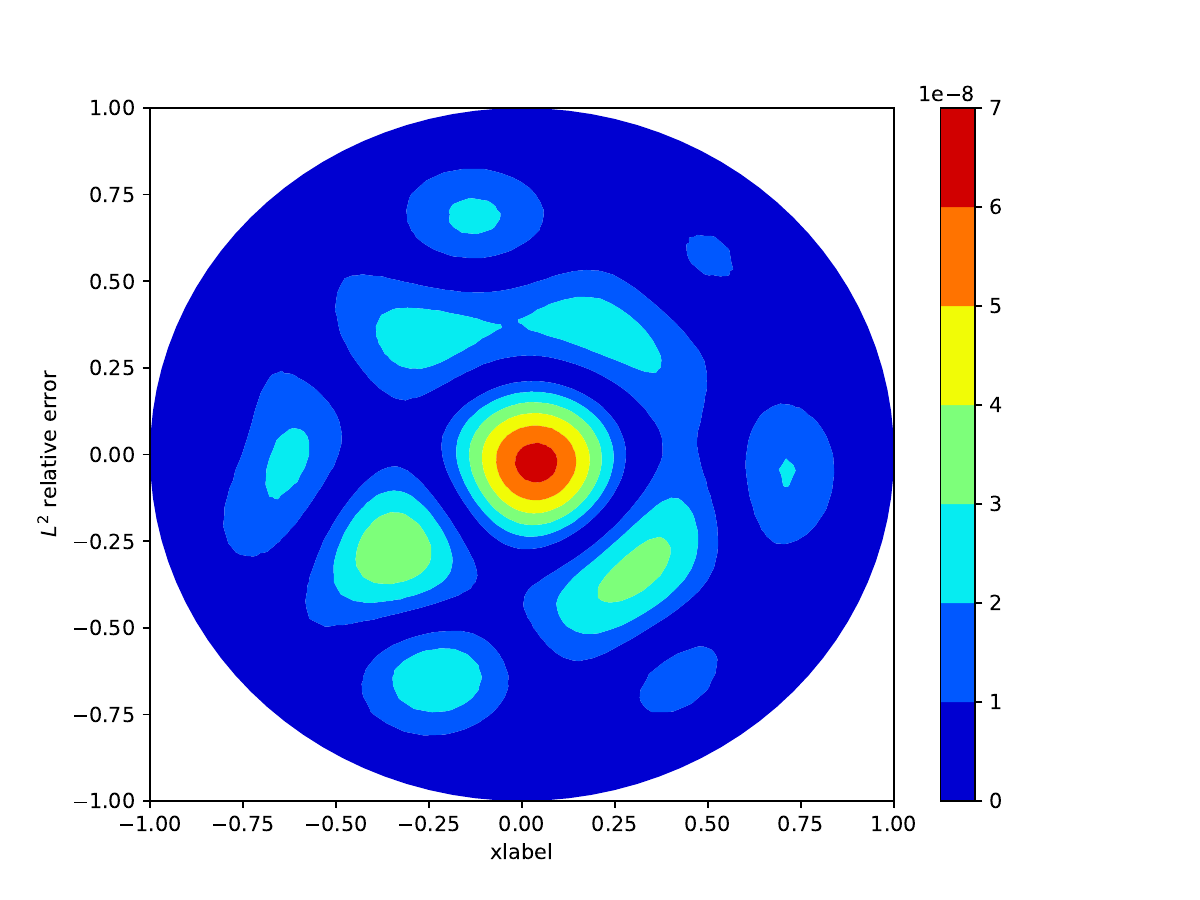}
		\caption{}
	\end{subfigure}
	\caption{Comparison of the RNN and RNN-Scaling methods for Example 2.6:   (a) Varying  $M$ with the default initialization,  network architecture  [2, 100, 100, $M$, 1]  and $N=64$.
		(b) The absolute error of the RNN method.  (c) The absolute error of the RNN-Scaling method. }  \label{exp6-compare}
\end{figure}

We fix $N=64$  
and vary   $M$ to compare the   RNN method  and the RNN-Scaling method, where  the network architecture is [2, 100, 100, $M$, 1] and  the default initialization   is employed.
Fig. \ref{exp6-compare}(a)  presents the comparison results, 
it can be seen that the RNN-Scaling method produces the smallest error, and typically results in a reduction of 2 orders of magnitude in error compared to the RNN method.
Figs. \ref{exp6-compare}(b)-(c) illustrate  the absolute errors of both methods when 
$N=64$ and $M=250$.  We observe  that the error of the RNN method is concentrated around the boundary, while the error of the RNN-Scaling method is concentrated in the interior.

From Examples 2.1 to 2.6,  we summarize as follows.    The  RNN-BP method  consistently achieves the smallest errors  compared to the other two methods.  
In   Examples 2.1 to 2.3, when $M$ and the numbers of the collocation points are relatively large, the errors of the  RNN-BP method are  reduced by 6-7 orders of magnitude compared to the RNN method,  with a  maximum reduction of  up to 9 orders of magnitude.
  
For the  RNN-Scaling method, when $M$ and the numbers of the collocation points are relatively large,  the errors of Examples 2.1 to 2.4 are reduced by    4-5 orders of magnitude;  the errors of Example 2.5 are    reduced by  2-3 orders of magnitude, demonstrating  the efficiency of the RNN-Scaling method.

\section{Conclusion}
This work   proposes two improved RNN methods   for  solving    elliptic  equations.  The first   is the  RNN-BP method that enforces exact boundary conditions    on rectangular domains, including both  Dirichlet and clamped boundary conditions.  The enforcement approach for clamped boundary condition  is  direct and does not need to introduce the auxiliary gradient variables, which reduces computation and avoids potential additional errors. We demonstrate theoretically and numerically that the RNN-BP method is exact for some solutions with specific forms.

Secondly, the RNN-Scaling method is introduced, which modifies the linear algebraic equations by increasing the weight of  boundary equations.    The method is  extended to the  circular domain and  the  errors  are 1-2 orders of magnitude smaller than those of the   RNN, with the potential for further generalization to general domains.

Finally, we present   several  numerical examples  to compare the performance of  the three  methods. Both  of the improved randomized neural network methods achieve  higher accuracy than the   RNN method.  The error is reduced by 6 orders of magnitude for some tests.

\section*{CRediT authorship contribution statement}
$\mathbf{Huifang, Zhou:}$  Writing-original draft, Visualization, Validation,  Software,  Methodology, Investigation, Formal analysis, Conceptualization, Funding acquisition.
$\mathbf{Zhiqiang, Sheng:}$ Conceptualization,  Writing-review \& editing, Funding acquisition, Supervision,   Project administration,  Software.

\section*{Declaration of competing interest}
The authors declare that they have no known competing financial interests or personal relationships that could have appeared to influence the work reported in this paper.

\section*{Data availability}
Data will be made available on request.

\section*{Acknowledgement}
 This work is partially supported by
	the National Natural Science Foundation of China (12201246,12071045),  Fund of National Key Laboratory of Computational Physics, LCP Fund for Young Scholar
	(6142A05QN22010), 
	  National Key R\&D Program of China (2020YFA0713601),
	and by the Key Laboratory of Symbolic Computation and Knowledge Engineering of Ministry of Education, Jilin University, Changchun, 130012, P.R. China.
 
\bibliographystyle{siam}
\bibliography{library}

\begin{thebibliography}{10}

\bibitem{Barrett1999a}
{\sc J.~W. Barrett, J.~F. Blowey, and H.~Garcke}, {\em Finite element
  approximation of the {{Cahn--Hilliard}} equation with degenerate mobility},
  SIAM Journal on Numerical Analysis, 37 (1999), pp.~286--318.

\bibitem{Ben-Artzi2009}
{\sc M.~{Ben-Artzi}, I.~Chorev, J.-P. Croisille, and D.~Fishelov}, {\em A
  compact difference scheme for the biharmonic equation in planar irregular
  domains}, SIAM Journal on Numerical Analysis, 47 (2009), pp.~3087--3108.

\bibitem{Bialecki2012}
{\sc B.~Bialecki}, {\em A fourth order finite difference method for the
  {{Dirichlet}} biharmonic problem}, Numerical Algorithms, 61 (2012),
  pp.~351--375.

\bibitem{Bialecki2010}
{\sc B.~Bialecki and A.~Karageorghis}, {\em Spectral {{Chebyshev}} collocation
  for the {{Poisson}} and biharmonic equations}, SIAM Journal on Scientific
  Computing, 32 (2010), pp.~2995--3019.

\bibitem{Chen2020}
{\sc J.~Chen, R.~Du, and K.~Wu}, {\em A comparison study of deep {{Galerkin}}
  method and deep {{Ritz}} method for elliptic problems with different boundary
  conditions}, Communications in Mathematical Research, 36 (2020),
  pp.~354--376.

\bibitem{Cui2020}
{\sc M.~Cui and S.~Zhang}, {\em On the uniform convergence of the weak
  {{Galerkin}} finite element method for a singularly-perturbed biharmonic
  equation}, Journal of Scientific Computing, 82 (2020), pp.~1--15.

\bibitem{Doha2008}
{\sc E.~Doha and A.~Bhrawy}, {\em Efficient spectral-{{Galerkin}} algorithms
  for direct solution of fourth-order differential equations using {{Jacobi}}
  polynomials}, Applied Numerical Mathematics, 58 (2008), pp.~1224--1244.

\bibitem{Dong2021a}
{\sc S.~Dong and N.~Ni}, {\em A method for representing periodic functions and
  enforcing exactly periodic boundary conditions with deep neural networks},
  Journal of Computational Physics, 435 (2021), p.~110242.

\bibitem{E2018}
{\sc W.~E and B.~Yu}, {\em The deep {{Ritz}} method: {{A}} deep learning-based
  numerical algorithm for solving variational problems}, Communications in
  Mathematics and Statistics, 6 (2018), pp.~1--12.

\bibitem{Henn2005}
{\sc S.~Henn}, {\em A multigrid method for a fourth-order diffusion equation
  with application to image processing}, SIAM Journal on Scientific Computing,
  27 (2005), pp.~831--849.

\bibitem{Jagtap2020}
{\sc A.~D. Jagtap, E.~Kharazmi, and G.~E. Karniadakis}, {\em Conservative
  physics-informed neural networks on discrete domains for conservation laws:
  {{Applications}} to forward and inverse problems}, Computer Methods in
  Applied Mechanics and Engineering, 365 (2020), p.~113028.

\bibitem{Lagaris1998}
{\sc I.~Lagaris, A.~Likas, and D.~Fotiadis}, {\em Artificial neural networks
  for solving ordinary and partial differential equations}, IEEE Transactions
  on Neural Networks, 9 (1998), pp.~987--1000.

\bibitem{Lagaris2000}
{\sc I.~Lagaris, A.~Likas, and D.~Papageorgiou}, {\em Neural-network methods
  for boundary value problems with irregular boundaries}, IEEE Transactions on
  Neural Networks, 11 (2000), pp.~1041--1049.

\bibitem{LePotier2005a}
{\sc C.~Le~Potier}, {\em Finite volume monotone scheme for highly anisotropic
  diffusion operators on unstructured triangular meshes}, Comptes Rendus
  Mathematique, 341 (2005), pp.~787--792.

\bibitem{liu2023a}
{\sc Y.~Liu and W.~Ma}, {\em Gradient auxiliary physics-informed neural network
  for nonlinear biharmonic equation}, Engineering Analysis with Boundary
  Elements, 157 (2023), pp.~272--282.

\bibitem{Lyu2021}
{\sc L.~Lyu, K.~Wu, R.~Du, and J.~Chen}, {\em Enforcing exact boundary and
  initial conditions in the deep mixed residual method}, CSIAM Transactions on
  Applied Mathematics, 2 (2021), pp.~748--775.

\bibitem{Lyu2022}
{\sc L.~Lyu, Z.~Zhang, M.~Chen, and J.~Chen}, {\em {{MIM}}: {{A}} deep mixed
  residual method for solving high-order partial differential equations},
  Journal of Computational Physics, 452 (2022), p.~110930.

\bibitem{Mai-Duy2007}
{\sc N.~{Mai-Duy} and R.~Tanner}, {\em A spectral collocation method based on
  integrated {{Chebyshev}} polynomials for two-dimensional biharmonic
  boundary-value problems}, Journal of Computational and Applied Mathematics,
  201 (2007), pp.~30--47.

\bibitem{Ming2006}
{\sc W.~Ming and J.~Xu}, {\em The {{Morley}} element for fourth order elliptic
  equations in any dimensions}, Numerische Mathematik, 103 (2006),
  pp.~155--169.

\bibitem{Ming2007}
\leavevmode\vrule height 2pt depth -1.6pt width 23pt, {\em Nonconforming
  tetrahedral finite elements for fourth order elliptic equations}, Mathematics
  of Computation, 76 (2007), pp.~1--19.

\bibitem{Mohanty2000}
{\sc R.~Mohanty}, {\em A fourth-order finite difference method for the general
  one-dimensional nonlinear biharmonic problems of first kind}, Journal of
  Computational and Applied Mathematics, 114 (2000), pp.~275--290.

\bibitem{Mozolevski2003}
{\sc I.~Mozolevski and E.~S{\"u}li}, {\em A priori error analysis for the
  hp-version of the discontinuous {{Galerkin}} finite element method for the
  biharmonic equation}, Computational Methods in Applied Mathematics, 3 (2003),
  pp.~596--607.

\bibitem{Park2013}
{\sc C.~Park and D.~Sheen}, {\em A quadrilateral {{Morley}} element for
  biharmonic equations}, Numerische Mathematik, 124 (2013), pp.~395--413.

\bibitem{Raissi2019}
{\sc M.~Raissi, P.~Perdikaris, and G.~Karniadakis}, {\em Physics-informed
  neural networks: {{A}} deep learning framework for solving forward and
  inverse problems involving nonlinear partial differential equations}, Journal
  of Computational Physics, 378 (2019), pp.~686--707.

\bibitem{Sheng2021a}
{\sc H.~Sheng and C.~Yang}, {\em {{PFNN}}: {{A}} penalty-free neural network
  method for solving a class of second-order boundary-value problems on complex
  geometries}, Journal of Computational Physics, 428 (2021), p.~110085.

\bibitem{Sheng2008}
{\sc Z.~Sheng and G.~Yuan}, {\em A nine point scheme for the approximation of
  diffusion operators on distorted quadrilateral meshes}, SIAM Journal on
  Scientific Computing, 30 (2008), pp.~1341--1361.

\bibitem{Sheng2016}
\leavevmode\vrule height 2pt depth -1.6pt width 23pt, {\em A new nonlinear
  finite volume scheme preserving positivity for diffusion equations}, Journal
  of Computational Physics, 315 (2016), pp.~182--193.

\bibitem{Sirignano2018}
{\sc J.~Sirignano and K.~Spiliopoulos}, {\em {{DGM}}: {{A}} deep learning
  algorithm for solving partial differential equations}, Journal of
  Computational Physics, 375 (2018), pp.~1339--1364.

\bibitem{sukumar2022}
{\sc N.~Sukumar and A.~Srivastava}, {\em Exact imposition of boundary
  conditions with distance functions in physics-informed deep neural networks},
  Computer Methods in Applied Mechanics and Engineering, 389 (2022), p.~114333.

\bibitem{Ventsel2001}
{\sc E.~Ventsel and T.~Krauthammer}, {\em Thin {{Plates}} and {{Shells}}:
  {{Theory}}, {{Analysis}}, and {{Applications}}}, Marcel Deekker Inc, New
  York, 2nd edition~ed., 2001.

\bibitem{Xie2023}
{\sc Y.~Xie, Y.~Ma, and Y.~Wang}, {\em Automatic boundary fitting framework of
  boundary dependent physics-informed neural network solving partial
  differential equation with complex boundary conditions}, Computer Methods in
  Applied Mechanics and Engineering, 414 (2023), p.~116139.

\bibitem{Xu2023}
{\sc M.~Xu and C.~Shi}, {\em A {{Hessian}} recovery-based finite difference
  method for biharmonic problems}, Applied Mathematics Letters, 137 (2023),
  p.~108503.

\bibitem{Zang2020}
{\sc Y.~Zang, G.~Bao, X.~Ye, and H.~Zhou}, {\em Weak adversarial networks for
  high-dimensional partial differential equations}, Journal of Computational
  Physics, 411 (2020), p.~109409.

\bibitem{Zhang2015}
{\sc R.~Zhang and Q.~Zhai}, {\em A weak {{Galerkin}} finite element scheme for
  the biharmonic equations by using polynomials of reduced order}, Journal of
  Scientific Computing, 64 (2015), pp.~559--585.

\bibitem{Zhang2020b}
{\sc S.~Zhang}, {\em Minimal consistent finite element space for the biharmonic
  equation on quadrilateral grids}, IMA Journal of Numerical Analysis, 40
  (2020), pp.~1390--1406.

\end{thebibliography}
\end{document}